\DeclareMathOperator{\con}{con}
\DeclareMathOperator{\simple}{sim}
\DeclareMathOperator{\mul}{mul}
\DeclareMathOperator{\occ}{occ}
\DeclareMathOperator{\var}{var}
\DeclareMathOperator{\FIC}{FIC}
\newtheorem{theorem}{Theorem}[section]
\newtheorem{proposition}[theorem]{Proposition}
\newtheorem{lemma}[theorem]{Lemma}
\newtheorem{corollary}[theorem]{Corollary}
\theoremstyle{definition}
\newtheorem{remark}{Remark}
\numberwithin{equation}{section}
\renewcommand*\subjclass[2][2010]{\def\@subjclass{#2}\@ifundefined{subjclassname@#1}{\ClassWarning{\@classname}{Unknown edition (#1) of Mathematics Subject Classification; using '2010'.}}{\@xp\let\@xp\subjclassname\csname subjclassname@#1\endcsname}}
\renewcommand{\subjclassname}{\textup{2010} Mathematics Subject Classification}
\begin{document}

\title[Varieties of monoids whose subvariety lattice is distributive]{Varieties of aperiodic monoids with central idempotents whose subvariety lattice is distributive}
\thanks{The work is supported by the Ministry of Science and Higher Education of the Russian Federation (project FEUZ-2020-0016).}

\author{Sergey V. Gusev}

\address{Ural Federal University, Institute of Natural Sciences and Mathematics, Lenina 51, Ekaterinburg 620000, Russia}

\email{sergey.gusb@gmail.com}

\begin{abstract}
We completely classify all varieties of aperiodic monoids with central idempotents whose subvariety lattice is distributive.
\end{abstract}

\keywords{Monoid, aperiodic monoid, monoid with central idempotents, variety, subvariety lattice, distributive lattice.}

\subjclass{20M07}

\maketitle

\section{Introduction and summary}
\label{Sec: introduction}

A variety $\mathbf V$ is \textit{distributive} if its lattice $\mathfrak L(\mathbf V)$ of subvarieties is distributive.

There was genuine interest to investigate distributive varieties of groups. 
At the turn of the 1960s and 1970s, there were a lot of articles on this topic (see Cossey~\cite{Cossey-69}, Kov\'acs and Newman~\cite{Kovacs-Newman-71} and Roman'kov~\cite{Romankov-70}, for instance). 
However, over time, the activity began to fade. 
The general problem of describing distributive varieties of groups turned out to be highly infeasible.
Here it suffices to refer to the result of Kozhevnikov~\cite{Kozhevnikov-12}, which implies that there exist uncountably many group varieties whose subvariety lattice is isomorphic to the 3-element chain.

The problem of describing (in term of identities) distributive varieties of rings was raised by Bokut' in 1976 in~\cite[Problem~19]{Dnestrovskaja-tetrad}. 
This problem remains open so far.
Quoting from a paper~\cite{Volkov-10} by Volkov on this topic, `There is extensive literature on the subject so that even the mere list of relevant publications is far too long to be placed here. 
Roughly speaking, one may characterize the current stage of investigations as a period of searching for a border separating varieties with distributive and non-distributive subvariety lattice'.

In 1979, Shevrin~\cite[Problem~2.60a]{sverdlovsk-tetrad} posed the problem of classifying all distributive varieties of semigroups. 
This problem includes the problem of identifying all distributive varieties of periodic groups.
In view of the above-mentioned result by Kozhevnikov~\cite{Kozhevnikov-12}, the last problem seems to be extremely difficult.
So, it is natural to speak about the classification modulo group varieties here.
In the early 1990s, in a series of papers, Volkov described distributive varieties of semigroups in a very wide partial case, resulting in an almost complete description modulo group varieties (see Shevrin at al.~\cite[Section~11]{Shevrin-Vernikov-Volkov-09} for more details).

The present article is concerned with the distributive varieties of \textit{monoids}, i.e., semigroups with an identity element.
Even though monoids are very similar to semigroups, the story turns out to be very different and difficult.
Distributive varieties of monoids have not been systematically examined earlier, although non-trivial examples of such varieties have long been known.
We mean the variety of all commutative monoids (Head~\cite{Head-68}) and the variety of all idempotent monoids (Wismath~\cite{Wismath-86}).
Since the middle of the 2000s, papers began to appear with some other non-trivial examples of distributive varieties of monoids (see Gusev~\cite{Gusev-19,Gusev-20}, Gusev and Sapir~\cite{Gusev-Sapir-22}, Gusev and Vernikov~\cite{Gusev-Vernikov-18,Gusev-Vernikov-21}, Jackson~\cite{Jackson-05}, Jackson and Lee~\cite{Jackson-Lee-18}, Lee~\cite{Lee-08,Lee-12b,Lee-14}, Zhang and Luo~\cite{Zhang-Luo-19}).

The present paper is the first attempt at a systematic study of distributive varieties of monoids. 
As in the semigroup case, in view of the result by Kozhevnikov~\cite{Kozhevnikov-12}, the general problem of classifying distributive varieties of monoids seems to be extremely difficult because it includes the problem of identifying all distributive varieties of periodic groups.
Thus, it is natural to begin with the study of monoid varieties with the mentioned property within the class of monoids that do not contain non-trivial subgroups.
Such monoids are called \textit{aperiodic}.
Nevertheless, experience suggests that even the problem of classifying distributive varieties of aperiodic monoids remains quite difficult.
So, it is natural at first to try solving this problem within some subclass of the class of all aperiodic monoids. 

The class $\mathbf A_\mathsf{cen}$ of aperiodic monoids with central idempotents is a natural candidate.
This class is quite wide. 
It includes, in particular, all \textit{nilpotent} monoids, that is, monoids obtained from nilsemigroups by adjoining a new identity element.
Subvarieties of $\mathbf A_\mathsf{cen}$ have been intensively studied for two last decades. 
This class is rich in examples of varieties interesting from specific points of view (see Gusev~\cite{Gusev-19}, Gusev and Lee~\cite{Gusev-Lee-20}, Jackson~\cite{Jackson-05,Jackson-15}, Jackson and Lee~\cite{Jackson-Lee-18}, Jackson and Zhang~\cite{Jackson-Zhang-21}).
Besides that, one managed to completely describe subvarieties of $\mathbf A_\mathsf{cen}$ with some natural and important properties: hereditary finitely based varieties (Lee~\cite{Lee-12a}), almost Cross varieties (Lee~\cite{Lee-13}) and inherently non-finitely generated varieties (Lee~\cite{Lee-14}).
The main result of the present paper naturally fits into this series of results.
Namely, we completely classify distributive subvarieties of $\mathbf A_\mathsf{cen}$.

To formulate the main result of the article, we need some definitions and notation. 
Let $\mathfrak X$ be a countably infinite set called an \textit{alphabet}. 
As usual, we denote by $\mathfrak X^\ast$ the free monoid over the alphabet $\mathfrak X$; elements of $\mathfrak X^\ast$ are called \textit{words}, while elements of $\mathfrak X$ are said to be \textit{letters}. 
Words unlike letters are written in bold. 
An identity is written as $\mathbf u \approx \mathbf v$, where $\mathbf u,\mathbf v \in \mathfrak X^\ast$; it is \textit{non-trivial} if $\mathbf u \ne \mathbf v$.

As usual, $\mathbb N$ denote the set of all natural numbers. 
For any $n\in\mathbb N$, we denote by $S_n$ the full symmetric group on the set $\{1,2,\dots,n\}$. 
For convenience, we put $S_0=S_1$. 
Let $\mathbb N_0=\mathbb N\cup\{0\}$. 
For any $n,m,k\in\mathbb N_0$, $\rho\in S_{n+m}$ and $\tau\in S_{n+m+k}$, we define the words:
$$
\begin{aligned}
\mathbf a_{n,m}[\rho]&=\biggl(\prod_{i=1}^n z_it_i\biggr)x\biggl(\prod_{i=1}^{n+m} z_{i\rho}\biggr)x\biggl(\prod_{i=n+1}^{n+m} t_iz_i\biggr),\\
\mathbf a_{n,m}^\prime[\rho]&=\biggl(\prod_{i=1}^n z_it_i\biggr)x^2\biggl(\prod_{i=1}^{n+m} z_{i\rho}\biggr)\biggl(\prod_{i=n+1}^{n+m} t_iz_i\biggr),\\
\mathbf c_{n,m,k}[\tau]&=\biggl(\prod_{i=1}^n z_it_i\biggr)xyt\biggl(\prod_{i=n+1}^{n+m} z_it_i\biggr)x\biggl(\prod_{i=1}^{n+m+k} z_{i\tau}\biggr)y\biggl(\prod_{i=n+m+1}^{n+m+k} t_iz_i\biggr),\\
\mathbf c_{n,m,k}^\prime[\tau]&=\biggl(\prod_{i=1}^n z_it_i\biggr)yxt\biggl(\prod_{i=n+1}^{n+m} z_it_i\biggr)x\biggl(\prod_{i=1}^{n+m+k} z_{i\tau}\biggr)y\biggl(\prod_{i=n+m+1}^{n+m+k} t_iz_i\biggr).
\end{aligned}
$$
We denote by $\mathbf d_{n,m,k}[\tau]$ and $\mathbf d_{n,m,k}^\prime[\tau]$ the words obtained from the words $\mathbf c_{n,m,k}[\tau]$ and $\mathbf c_{n,m,k}^\prime[\tau]$, respectively, when reading the last words from right to left. 
We fix notation for the following two identities:
$$
\begin{aligned}
\alpha:\enskip xzytxy\approx xzytyx\ \ \text{ and } \ \ 
\beta:\enskip xzxyty\approx xzyxty.
\end{aligned}
$$
Let $\var\Sigma$ denote the monoid variety given by a set $\Sigma$ of identities. 
We fix notation for the following monoid varieties:
$$
\begin{aligned}
&\mathbf P_n=\var
\left\{
\begin{array}{l}
x^n\approx x^{n+1},\,x^2y\approx yx^2,\,\mathbf a_{k,\ell}[\rho] \approx \mathbf a_{k,\ell}^\prime[\rho],\\
\mathbf c_{k,\ell,m}[\tau]\approx\mathbf c_{k,\ell,m}^\prime[\tau],\,\mathbf d_{k,\ell,m}[\tau]\approx\mathbf d_{k,\ell,m}^\prime[\tau]
\end{array}
\middle\vert
\begin{array}{l}
k,\ell,m\in\mathbb N_0,\\
\rho\in S_{k+\ell},\ \tau\in S_{k+\ell+m}
\end{array}
\right\}
,\\
&\mathbf Q_n=\var\{x^n\approx x^{n+1},\,x^ny\approx yx^n,\,x^2y\approx xyx\},\\
&\mathbf R_n=\var\{x^n\approx x^{n+1},\,x^2y\approx yx^2,\,\alpha,\,\beta\},
\end{aligned}
$$
where $n\in \mathbb N$.
By $\mathbf V^\delta$ we denote the monoid variety \textit{dual} to the variety $\mathbf V$ (in other words, $\mathbf V^\delta$ consists of monoids dual to members of $\mathbf V$).

Our main result is the following

\begin{theorem}
\label{T: A_cen}
A subvariety of $\mathbf A_\mathsf{cen}$ is distributive if and only if it is contained in one of the varieties $\mathbf P_n$, $\mathbf Q_n$, $\mathbf Q_n^\delta$, $\mathbf R_n$ or $\mathbf R_n^\delta$ for some $n \in \mathbb N$.
\end{theorem}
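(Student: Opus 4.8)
The ``if'' direction is the easier one. For monoid varieties $\mathbf W\subseteq\mathbf V$ the lattice $\mathfrak L(\mathbf W)$ is the interval $[\mathbf T,\mathbf W]$ of $\mathfrak L(\mathbf V)$, where $\mathbf T$ denotes the trivial variety, and every sublattice of a distributive lattice is distributive; hence distributivity passes to subvarieties, and it suffices to prove that $\mathbf P_n$, $\mathbf Q_n$ and $\mathbf R_n$ are distributive for each $n\in\mathbb N$ (the dual varieties $\mathbf Q_n^\delta$ and $\mathbf R_n^\delta$ follow at once, since $\mathbf U\mapsto\mathbf U^\delta$ is an automorphism of the lattice of all monoid varieties). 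I would establish this by first solving the word problem in the corresponding relatively free monoids: the families $\mathbf a_{k,\ell}[\rho]\approx\mathbf a_{k,\ell}'[\rho]$, $\mathbf c_{k,\ell,m}[\tau]\approx\mathbf c_{k,\ell,m}'[\tau]$ and $\mathbf d_{k,\ell,m}[\tau]\approx\mathbf d_{k,\ell,m}'[\tau]$ are chosen precisely so that, modulo the appropriate identity basis, every word reduces to a canonical one by a terminating confluent rewriting. From the resulting normal form one reads off a description of $\mathfrak L(\mathbf P_n)$, and of the much smaller lattices $\mathfrak L(\mathbf Q_n)$ and $\mathfrak L(\mathbf R_n)$, by verifying that every subvariety is axiomatised, relative to the ambient variety, by a subset of a fixed \emph{independent} list of identities; this realises $\mathfrak L$ as a sublattice of a direct product of chains, which is distributive.

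\textbf{Necessity: reformulation.} For the converse I argue by contraposition (recall that a lattice is distributive if and only if it contains no sublattice isomorphic to $M_3$ or to $N_5$): given a subvariety $\mathbf V$ of $\mathbf A_{\mathsf{cen}}$ contained in none of $\mathbf P_n,\mathbf Q_n,\mathbf Q_n^\delta,\mathbf R_n,\mathbf R_n^\delta$, I must exhibit such a sublattice in $\mathfrak L(\mathbf V)$. Since the one-generated relatively free monoid of $\mathbf V$ lies in $\mathbf A_{\mathsf{cen}}$, it is aperiodic, so $\mathbf V$ satisfies $x^n\approx x^{n+1}$ for some $n$; then $x^n$ is idempotent, hence central, in every member of $\mathbf V$, so $\mathbf V$ also satisfies $x^ny\approx yx^n$. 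For this $n$ the hypothesis unwinds to: $\mathbf V$ fails $x^2y\approx yx^2$ or one of the identities $\mathbf a\approx\mathbf a'$, $\mathbf c\approx\mathbf c'$, $\mathbf d\approx\mathbf d'$; $\mathbf V$ fails $x^2y\approx xyx$; $\mathbf V$ fails the dual identity $yx^2\approx xyx$; and $\mathbf V$ fails $x^2y\approx yx^2$ or $\alpha$ or $\beta$, and likewise with $\alpha$, $\beta$ replaced by their duals. A case split on whether or not $\mathbf V$ satisfies $x^2y\approx yx^2$ is then the natural way to proceed.

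\textbf{Necessity: the combinatorial core.} If $\mathbf V\not\models x^2y\approx yx^2$, then $\mathbf V$ also fails $x^2y\approx xyx$ and $yx^2\approx xyx$, so all three identities relating the words $x^2y$, $xyx$, $yx^2$ fail in $\mathbf V$; I would show that such a $\mathbf V$ must contain one of a short, explicitly listed family of ``small'' non-distributive varieties, built from a monoid of $\mathbf V$ on which these three words are pairwise distinct, whence $\mathbf V$ is non-distributive. If $\mathbf V\models x^2y\approx yx^2$, then $\mathbf V$ simultaneously fails one of $\mathbf a\approx\mathbf a'$, $\mathbf c\approx\mathbf c'$, $\mathbf d\approx\mathbf d'$, fails $\alpha$ or $\beta$, and fails the dual of $\alpha$ or of $\beta$; choosing an \emph{extremal} identity that witnesses the first failure, I would analyse the shape of the two words involved using the standard combinatorics of identities of monoids in $\mathbf A_{\mathsf{cen}}$ — the behaviour of $\con$, $\simple$ and $\mul$, the linear orders induced by the first and by the last occurrences of letters, and the admissible deletions of letters — and, exploiting the two remaining failures, conclude that $\mathbf V$ contains one of finitely many non-distributive varieties, each displayed together with a concrete $M_3$ or $N_5$ inside its subvariety lattice. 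The principal obstacle I anticipate is the completeness of this list of forbidden subvarieties: one must check that every possible shape of a witnessing identity is accounted for, so that no distributive variety is wrongly excluded and no non-distributive one is overlooked. The normal-form analysis for $\mathbf P_n$ behind the sufficiency part is the other substantial — though more routine — technical component.
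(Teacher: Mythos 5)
Your outline follows the same overall strategy as the paper (reduce sufficiency to the distributivity of $\mathbf P_n$, $\mathbf Q_n$, $\mathbf R_n$; for necessity, derive the defining identities from a catalogue of forbidden non-distributive subvarieties — your contrapositive phrasing is equivalent), but each half contains a concrete defect beyond mere incompleteness. In the sufficiency half, the mechanism you propose is provably impossible: there is no \emph{independent} set $\Sigma$ of identities such that every subvariety of $\mathbf P_n$ equals $\mathbf P_n\Phi$ for some $\Phi\subseteq\Sigma$. If such a $\Sigma$ existed, $\Phi\mapsto\mathbf P_n\Phi$ would be a lattice anti-isomorphism onto the power set of $\Sigma$, so $\mathfrak L(\mathbf P_n)$ would be Boolean; it is not (for $n\ge 2$), since every non-trivial aperiodic monoid variety contains $\mathbf{SL}$, which therefore has no complement. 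Indeed the identity families one is forced to use — $xy\approx yx$, the identities $x^k\approx x^\ell$, the identities $\delta_{k,\ell}$ and the permutational identities~\eqref{pxyq=pyxq} — are heavily interdependent (commutativity implies all the rest). The condition that actually yields distributivity is the meet-theoretic one of Lemma~\ref{L: smth imply distributivity}: each $\sigma\in\Sigma$ must be \emph{prime for meets}, i.e.\ $\mathbf U\wedge\mathbf U^\prime\models\sigma$ forces $\mathbf U\models\sigma$ or $\mathbf U^\prime\models\sigma$. Proving this primality for each family (Lemmas~\ref{L: x^n=x^m in X wedge Y}, \ref{L: pxyq=pyxq in X wedge Y}, \ref{L: xy.. = yx.. in X wedge Y} and Corollaries~\ref{C: xy=yx in X wedge Y}, \ref{C: delta_{n,m} in X wedge Y}), together with the long induction showing that every subvariety of $\mathbf P_n$ really is defined by a subset of $\Sigma$, is the bulk of the sufficiency proof and is not delivered by a normal-form or rewriting computation.

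In the necessity half the contraposition is logically sound, and your first case ($\mathbf V\not\models x^2y\approx yx^2$) does lead, essentially as you say, to the non-distributive variety $\mathbf M(x^2y,yx^2)$ inside $\mathbf V$ (though one needs the three words to be \emph{isoterms}, not merely pairwise distinct in some member of $\mathbf V$). But in the second case the "short, explicitly listed family of small non-distributive varieties" does not exist in that form. The witnesses are the infinite families $\mathbf M(\mathbf a_{n,m}[\rho])$ and $\mathbf M(\mathbf c_{n,m,k}[\rho])$, $\mathbf M(\mathbf d_{n,m,k}[\rho])$ for the extremal parameters $k=0$ and $k=n+m+1$ (Propositions~\ref{P: L(M(a_{n,m}[rho])) is not distributive}, \ref{P: L(M(c_{n,m,n+m+1}[rho])) is not modular} and~\ref{P: L(M(c_{n,m,0}[rho])) is not modular}, each a substantial construction), and, crucially, the \emph{join} varieties $\mathbf M(xtyzxy)\vee\mathbf N$ and $\mathbf M(xzxyty)\vee\mathbf N$, which are non-modular even though neither factor alone need generate anything non-distributive. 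A framework based on a single monoid of $\mathbf V$ separating finitely many words cannot produce these join witnesses; one needs the additional dichotomy on whether $\mathbf N$ or $\mathbf N^\delta$ lies in $\mathbf V$, which is precisely what separates the conclusion $\mathbf V\subseteq\mathbf R_n$ from $\mathbf V\subseteq\mathbf P_n$ and which your sketch does not anticipate. Finally, converting "$\mathbf V$ fails one of the listed identities" into "$\mathbf V$ contains one of the listed varieties" is the content of Lemmas~\ref{L: satisfies a_{n+m}[rho]=a_n^{n+m}[rho]} and~\ref{L: V notin M(c_{n,m,k}[rho])} and carries most of the word combinatorics; none of it is supplied by the proposal.
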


Notice that the proof Theorem~\ref{T: A_cen} implies that set of all distributive subvarieties of $\mathbf A_\mathsf{cen}$ is countably infinite (see Remark~\ref{R: countably infinite} at the end of Section~\ref{Sec: proof}).

The article consists of five sections. 
Some background results are first given in Section~\ref{Sec: preliminaries}. 
Section~\ref{Sec: non-distributive} contains several examples of non-distributive varieties of monoids.
In Section~\ref{Sec: auxiliary results}, we prove a number of auxiliary assertions.
Results from Sections~\ref{Sec: non-distributive} and~\ref{Sec: auxiliary results} will then be used in Section~\ref{Sec: proof} to prove Theorem~\ref{T: A_cen}.

\section{Preliminaries}
\label{Sec: preliminaries}

Acquaintance with rudiments of universal algebra is assumed of the reader.
Refer to the monograph of Burris and Sankappanavar~\cite{Burris-Sankappanavar-81} for more information.

\subsection{Deduction}

An identity $\mathbf u \approx \mathbf v$ is \textit{directly deducible} from an identity $\mathbf s \approx \mathbf t$ if there exist some words $\mathbf a,\mathbf b \in \mathfrak X^\ast$ and substitution $\phi\colon \mathfrak X \to \mathfrak X^\ast$ such that $\{ \mathbf u, \mathbf v \} = \{ \mathbf a\phi(\mathbf s)\mathbf b,\mathbf a\phi(\mathbf t)\mathbf b \}$.
A non-trivial identity $\mathbf u \approx \mathbf v$ is \textit{deducible} from a set $\Sigma$ of identities if there exists some finite sequence $\mathbf u = \mathbf w_0, \mathbf w_1, \ldots, \mathbf w_m = \mathbf v$ of words such that each identity $\mathbf w_i \approx \mathbf w_{i+1}$ is directly deducible from some identity in $\Sigma$.

The following assertion is a specialization for monoids of a well-known universal-algebraic fact (see Burris and Sankappanavar~\cite[Theorem~II.14.19]{Burris-Sankappanavar-81}, for instance).

\begin{proposition}
\label{P: deduction}
Let $\mathbf V$ be the variety defined by some set $\Sigma$ of identities.
Then $\mathbf V$ satisfies an identity $\mathbf u \approx \mathbf v$ if and only if $\mathbf u \approx \mathbf v$ is deducible from $\Sigma$.\qed
\end{proposition}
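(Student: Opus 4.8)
The plan is to obtain Proposition~\ref{P: deduction} as a specialization to monoids of the completeness theorem for equational logic (Birkhoff's theorem); cf.\ \cite[Theorem~II.14.19]{Burris-Sankappanavar-81}. The only work beyond quoting that theorem is to reconcile the monoid-specific notion of deducibility used here — where a step uses a two-sided context $\mathbf a\,({-})\,\mathbf b$ together with a substitution $\phi\colon\mathfrak X\to\mathfrak X^\ast$ into \emph{words} — with the general notion, where arbitrary term contexts are allowed.

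For the ``if'' direction (soundness), I would argue by induction on the length of a deduction $\mathbf u=\mathbf w_0,\mathbf w_1,\dots,\mathbf w_m=\mathbf v$. For a single step, suppose $\mathbf w_i\approx\mathbf w_{i+1}$ is directly deducible from $\mathbf s\approx\mathbf t\in\Sigma$, say $\{\mathbf w_i,\mathbf w_{i+1}\}=\{\mathbf a\phi(\mathbf s)\mathbf b,\mathbf a\phi(\mathbf t)\mathbf b\}$. Since every member of $\mathbf V$ is a monoid satisfying $\mathbf s\approx\mathbf t$, substituting via $\phi$ and multiplying on the left by $\mathbf a$ and on the right by $\mathbf b$ shows $\mathbf V$ satisfies $\mathbf w_i\approx\mathbf w_{i+1}$. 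Chaining these equalities, using that the relation ``$\mathbf V$ satisfies $\cdot\approx\cdot$'' is reflexive, symmetric and transitive, gives $\mathbf V\models\mathbf u\approx\mathbf v$.

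For the ``only if'' direction (completeness), define a binary relation $\theta$ on $\mathfrak X^\ast$ by $\mathbf p\mathbin\theta\mathbf q$ if and only if $\mathbf p=\mathbf q$ or $\mathbf p\approx\mathbf q$ is deducible from $\Sigma$. Then $\theta$ is a fully invariant congruence of the free monoid $\mathfrak X^\ast$: it is an equivalence by the definition of deducibility; it is compatible with multiplication because prepending or appending a fixed word $\mathbf r$ to every word of a deduction sequence yields a deduction sequence (only the contexts $\mathbf a,\mathbf b$ change); and it is stable under every substitution $\psi\colon\mathfrak X\to\mathfrak X^\ast$, since applying $\psi$ to a deduction sequence again yields one (a step directly deducible from $\mathbf s\approx\mathbf t$ via $\mathbf a,\mathbf b,\phi$ becomes one via $\psi(\mathbf a),\psi(\mathbf b),\psi\circ\phi$). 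Hence $F:=\mathfrak X^\ast/\theta$ is a relatively free monoid, and every identity of $\Sigma$ holds in $F$: each $\mathbf s\approx\mathbf t$ in $\Sigma$ is directly deducible from itself, so $\mathbf s\mathbin\theta\mathbf t$, and full invariance upgrades this to satisfaction of the identity. Therefore $F\in\mathbf V$. Now if $\mathbf V\models\mathbf u\approx\mathbf v$, then $F\models\mathbf u\approx\mathbf v$; evaluating both sides at the $\theta$-classes of the letters forces $\mathbf u\mathbin\theta\mathbf v$, and since $\mathbf u\approx\mathbf v$ is non-trivial we have $\mathbf u\ne\mathbf v$, whence $\mathbf u\approx\mathbf v$ is deducible from $\Sigma$.

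The only point requiring a little care — and the closest thing to an obstacle — is verifying that the monoid-specific notion of direct deducibility loses no generality compared with the universal-algebraic one. This holds because in the equational theory of monoids every term is equal to a word and every unary term context is equal to one of the form $\mathbf a x\mathbf b$, so the two notions of deducibility coincide on identities between words; everything else is the standard Birkhoff argument, which is why the statement is recorded here without a detailed proof.
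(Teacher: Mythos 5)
Your proposal is correct and matches the paper's intent: the paper records this proposition without proof precisely as a specialization of the completeness theorem of equational logic from Burris--Sankappanavar, and your argument is the standard Birkhoff soundness-and-completeness proof (via the fully invariant congruence generated by $\Sigma$ on $\mathfrak X^\ast$) adapted to the monoid setting, including the correct observation that two-sided word contexts suffice because every unary term context for monoids reduces to the form $\mathbf a x\mathbf b$.
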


\subsection{Factor monoids}
\label{Subsec: factor monoids}

For any set of words $W$, the \textit{factor monoid of} $W$, denoted by $M(W)$, is the monoid that consists of all factors of the words of $W$ and a zero element $0$, with multiplication~$\cdot$ given by 
$$ 
\mathbf u \cdot \mathbf v = 
\begin{cases} 
\mathbf u\mathbf v & \text{if $\mathbf u\mathbf v$ is a factor of some word in $W$}, \\ 
0 & \text{otherwise}; 
\end{cases} 
$$
the empty word, more conveniently written as~$1$, is the identity element of $M(W)$.
This construction was used by Perkins~\cite{Perkins-69} for exhibiting the first example of a non-finitely based finite semigroup.
Since the beginning of the millennium, such monoids were consistently and systematically studied in the many articles (see Jackson~\cite{Jackson-05}, Jackson and Sapir~\cite{Jackson-Sapir-00}, Sapir~\cite{Sapir-19}, for instance).

A word $\mathbf w$ is an \textit{isoterm} for a variety $\mathbf V$ if $\mathbf V$ violates any non-trivial identity of the form $\mathbf w \approx \mathbf w^\prime$.
Equivalently, $\mathbf w$ is an isoterm for $\mathbf V$ if and only if the identities satisfied by $\mathbf V$ cannot be used to convert $\mathbf w$ into a different word.

Given any word $\mathbf w$, let $\mathbf M(W)$ denote the variety generated by the factor monoid $M(W)$.
One advantage in working with factor monoids is the relative ease of checking if a variety $\mathbf M(W)$ is contained in some given variety.
 
\begin{lemma}[{Jackson~\cite[Lemma~3.3]{Jackson-05}}]
\label{L: M(W) in V}
Let $\mathbf V$ be a monoid variety and $W$ be a set of words. 
Then $M(W)$ lies in $\mathbf V$ if and only if each word in $W$ is an isoterm for $\mathbf V$.\qed
\end{lemma}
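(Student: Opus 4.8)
The plan is to prove the two implications directly from the description of $M(W)$, using the fact that any product in $M(W)$ is either $0$ or the literal concatenation of the words being multiplied (images equal to $1$ contributing the empty word), together with the observation that a prefix of a factor of a word in $W$ is again such a factor — so a product is non-zero precisely when that whole concatenation is a factor of a word in $W$.

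For the forward implication I would assume $M(W)\in\mathbf V$, fix $\mathbf w\in W$, and suppose $\mathbf V\models\mathbf w\approx\mathbf w'$. First evaluate this identity in $M(W)$ under the substitution sending every letter of $\con(\mathbf w)$ to itself and every other letter to $0$: the value of $\mathbf w$ is $\mathbf w\ne0$ (all prefixes of $\mathbf w$ are factors of $\mathbf w\in W$), so the value of $\mathbf w'$ is non-zero, and since $0$ is absorbing this forces $\con(\mathbf w')\subseteq\con(\mathbf w)$. Then re-evaluate under the substitution fixing each letter of $\con(\mathbf w)$: again the value of $\mathbf w$ is $\mathbf w\ne0$, so the value of $\mathbf w'$ is non-zero and hence literally equals the word $\mathbf w'$; thus $\mathbf w=\mathbf w'$, i.e.\ $\mathbf w$ is an isoterm for $\mathbf V$.

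For the converse I would assume every word of $W$ is an isoterm and check that $M(W)$ satisfies an arbitrary identity $\mathbf u\approx\mathbf v$ of $\mathbf V$. I first reduce to the case $\con(\mathbf u)=\con(\mathbf v)$: if a letter $y$ occurred in $\mathbf v$ but not in $\mathbf u$, substituting $1$ for all letters other than $y$ would give $\mathbf V\models 1\approx y^{k}$ with $k=\occ_y(\mathbf v)\ge1$, whence $\mathbf V\models\mathbf w\approx\mathbf w\,x^{k}$ for a word $\mathbf w\in W$ and a letter $x$, a non-trivial identity contradicting that $\mathbf w$ is an isoterm (the degenerate case in which $W$ contains no non-empty word is settled by inspecting the two-element monoid $M(W)$ directly). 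With $\con(\mathbf u)=\con(\mathbf v)$ in hand, take an evaluation $\phi$ into $M(W)$; if some letter of $\con(\mathbf u)$ goes to $0$, or if $\phi(\mathbf u)=\phi(\mathbf v)=0$, there is nothing to prove, so assume $\phi(\mathbf u)\ne0$. Then, read as a word, $\phi(\mathbf u)$ is the concatenation $\bar\phi(\mathbf u)$ of the images of the letters of $\mathbf u$, and it is a factor of some $\mathbf w\in W$, say $\mathbf w=\mathbf a\cdot\bar\phi(\mathbf u)\cdot\mathbf b$. Applying the substitution $\bar\phi\colon\mathfrak X\to\mathfrak X^\ast$ realising these images to $\mathbf u\approx\mathbf v$ and surrounding the result by $\mathbf a$ and $\mathbf b$ yields $\mathbf V\models\mathbf w\approx\mathbf a\cdot\bar\phi(\mathbf v)\cdot\mathbf b$; since $\mathbf w$ is an isoterm, $\mathbf a\,\bar\phi(\mathbf u)\,\mathbf b=\mathbf a\,\bar\phi(\mathbf v)\,\mathbf b$, and cancelling in $\mathfrak X^\ast$ gives $\bar\phi(\mathbf u)=\bar\phi(\mathbf v)$. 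As this word is a factor of $\mathbf w\in W$, the product $\phi(\mathbf v)$ computed in $M(W)$ does not collapse to $0$ and equals $\bar\phi(\mathbf v)=\phi(\mathbf u)$; the symmetric argument excludes the remaining case $\phi(\mathbf u)=0\ne\phi(\mathbf v)$. Hence $M(W)\models\mathbf u\approx\mathbf v$, and therefore $M(W)\in\mathbf V$.

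I expect the main obstacle to be bookkeeping: making precise that an evaluation in $M(W)$ either returns the plain concatenation of the relevant words or returns $0$, and then arranging matters so that the isoterm hypothesis is applied to a \emph{whole} word of $W$ rather than to a factor of one — which is exactly the role of the flanking words $\mathbf a,\mathbf b$ in the converse. The content reduction and the small degenerate cases ($W$ empty or containing only the empty word, $\mathbf V$ trivial) are routine.
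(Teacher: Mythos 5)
Your proof is correct. The paper gives no proof of this lemma at all --- it is quoted from Jackson's article with only a citation --- but your argument is the standard one and is carried out accurately: for the forward direction, evaluating the letters of $\con(\mathbf w)$ identically in $M(W)$ (after first killing extraneous letters to force $\con(\mathbf w')\subseteq\con(\mathbf w)$) recovers $\mathbf w=\mathbf w'$ literally; for the converse, lifting a non-collapsing evaluation to a substitution $\bar\phi\colon\mathfrak X\to\mathfrak X^\ast$ and flanking by $\mathbf a,\mathbf b$ so that the isoterm hypothesis is applied to a whole word of $W$, together with the prefix-of-a-factor observation, is exactly the right bookkeeping.
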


For brevity, if $\mathbf w_1,\mathbf w_2,\dots,\mathbf w_k$ are words, then we write $M(\mathbf w_1,\mathbf w_2,\dots,\mathbf w_k)$ [respectively, $\mathbf M(\mathbf w_1,\mathbf w_2,\dots,\mathbf w_k)$] rather than $M(\{\mathbf w_1,\mathbf w_2,\dots,\mathbf w_k\})$ [respectively, $\mathbf M(\{\mathbf w_1,\mathbf w_2,\dots,\mathbf w_k\})$].

\subsection{Subvarieties of $\mathbf A_\mathsf{cen}$}

The following fact is well known and can be easily verified.

\begin{lemma}
\label{L: subvariety of A_cen}
Any subvariety of $\mathbf A_\mathsf{cen}$ satisfies the identities
\begin{equation}
\label{x^n=x^{n+1} and x^ny=yx^n}
x^n \approx x^{n+1}\ \text{ and } \ x^ny \approx yx^n
\end{equation}
for some $n\in \mathbb N$.\qed
\end{lemma}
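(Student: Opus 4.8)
The plan is to extract a single $n$ that serves for both identities from the one‑generated relatively free monoid of $\mathbf V$. Let $F$ be the free monoid in $\mathbf V$ over a single letter $x$; by the defining property of relatively free monoids, for words $\mathbf u,\mathbf v$ in the letter $x$ alone we have $\mathbf V\models\mathbf u\approx\mathbf v$ if and only if $\mathbf u=\mathbf v$ in $F$. So it suffices to find $n\in\mathbb N$ with $x^n=x^{n+1}$ in $F$: once such an $n$ is available, every $M\in\mathbf V$ satisfies $a^n=a^{n+1}=\dots=a^{2n}$ for all $a\in M$, so $a^n$ is an idempotent of $M$; since the idempotents of $M$ are central (as $M\in\mathbf A_\mathsf{cen}$), $a^n$ commutes with every element of $M$, whence $M\models x^ny\approx yx^n$, and therefore $\mathbf V\models x^ny\approx yx^n$ as well.

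For the key step I would rule out that $F$ is infinite. Being generated by one element, $F$ is a quotient of the free monoid on $\{x\}$, hence a cyclic monoid; and $F$ is aperiodic because $F\in\mathbf V\subseteq\mathbf A_\mathsf{cen}$. A cyclic monoid is aperiodic precisely when it is infinite cyclic or satisfies $x^k=x^{k+1}$ for some $k$, so it remains to discard the first alternative. Suppose $F$ were infinite cyclic, i.e.\ $F\cong(\mathbb N_0,+)$. Then this infinite cyclic monoid lies in $\mathbf V$; since a variety is closed under homomorphic images and the two‑element group $\mathbb Z_2$ is a homomorphic image of $(\mathbb N_0,+)$ (send $x$ to a generator of $\mathbb Z_2$, collapsing $x^2$ to the identity), we would get $\mathbb Z_2\in\mathbf V$. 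But $\mathbb Z_2$ is a non‑trivial group, hence not aperiodic, contradicting $\mathbf V\subseteq\mathbf A_\mathsf{cen}$. Consequently $x^k=x^{k+1}$ holds in $F$ for some $k$, that is, $\mathbf V\models x^k\approx x^{k+1}$; taking $n=k$ (and $n=1$ in the degenerate case $k=0$, where $F$, and hence $\mathbf V$, is trivial) completes the proof together with the first paragraph.

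The only part carrying any content is the exclusion of the infinite cyclic monoid, and it genuinely relies on closure under homomorphic images: the class $\mathbf A_\mathsf{cen}$ itself contains $(\mathbb N_0,+)$, so one cannot argue from membership in $\mathbf A_\mathsf{cen}$ alone. The facts used about cyclic monoids (that a one‑generated monoid is a quotient of the free cyclic monoid, and the elementary aperiodicity dichotomy above), as well as the remark that an idempotent power becomes central in a monoid with central idempotents, are completely standard.
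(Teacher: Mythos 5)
Your proof is correct. The paper states this lemma without proof as a well-known fact, and your argument --- passing to the one-generated relatively free monoid, using closure under homomorphic images to exclude the infinite cyclic case via $\mathbb Z_2$, and then invoking centrality of the idempotent power $a^n$ --- is exactly the standard verification the paper has in mind.
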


Actually, the following fact is well known. 
We provide its proof for the sake of completeness.

\begin{lemma}
\label{L: does not contain M(xy)}
Let $\mathbf V$ be a subvariety of $\mathbf A_\mathsf{cen}$. 
If $M(xy)\notin\mathbf V$, then $\mathbf V$ is commutative.
\end{lemma}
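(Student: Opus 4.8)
The plan is to combine Lemma~\ref{L: M(W) in V} with Lemma~\ref{L: subvariety of A_cen} and then run a short case analysis on the word on the right-hand side of the identity witnessing that $xy$ is not an isoterm. By Lemma~\ref{L: M(W) in V}, the hypothesis $M(xy)\notin\mathbf V$ says exactly that $xy$ is not an isoterm for $\mathbf V$, so $\mathbf V$ satisfies a non-trivial identity $xy\approx\mathbf w$ for some word $\mathbf w\ne xy$. By Lemma~\ref{L: subvariety of A_cen}, $\mathbf V$ also satisfies $x^n\approx x^{n+1}$ and $x^ny\approx yx^n$ for some $n\in\mathbb N$. Applying to $xy\approx\mathbf w$ the substitutions retaining one letter and sending every other letter to the empty word, I obtain in $\mathbf V$ the identities $x\approx x^a$ and $y\approx y^b$ (where $a=\occ_x(\mathbf w)$ and $b=\occ_y(\mathbf w)$) and, for every letter $z\notin\{x,y\}$ occurring in $\mathbf w$, the identity $1\approx z^c$ with $c=\occ_z(\mathbf w)\ge 1$.

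Now I would argue by cases. If $\mathbf w$ involves a letter other than $x,y$, or $a=0$, or $b=0$, then $\mathbf V$ satisfies $t\approx 1$ for a single letter $t$: combining $1\approx z^c$ (respectively $x\approx 1$ or $y\approx 1$) with $x^n\approx x^{n+1}$ forces that letter down to $1$, so $\mathbf V$ is trivial and hence commutative. If $a\ge 2$, then $x\approx x^a$ yields $x\approx x^{1+m(a-1)}$ for every $m$, and taking $m$ with $1+m(a-1)\ge n$ and using $x^n\approx x^{n+1}$ gives $x\approx x^n$; hence $xy\approx x^ny\approx yx^n\approx yx$ and $\mathbf V$ is commutative. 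The case $b\ge 2$ is symmetric. In the only remaining case no letter other than $x,y$ occurs in $\mathbf w$ and $a=b=1$, so $\mathbf w$ has length~$2$ with exactly one $x$ and one $y$; since $\mathbf w\ne xy$ this forces $\mathbf w=yx$, and $\mathbf V$ is commutative.

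I do not expect any step to be a genuine obstacle: the argument is elementary once one uses that, inside $\mathbf A_\mathsf{cen}$, aperiodicity is available in the concrete form $x^n\approx x^{n+1}$. The only place calling for slight care is the reduction of $x\approx x^a$ (with $a\ge 2$) to $x\approx x^n$, i.e.\ that a periodic generator subject to $x^n\approx x^{n+1}$ must satisfy $x\approx x^n$; after that, centrality of idempotents, encoded by $x^ny\approx yx^n$, closes the argument.
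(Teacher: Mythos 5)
Your proof is correct, but it takes a genuinely different route from the paper's. The paper splits on whether the single letter $x$ is an isoterm for $\mathbf V$: if it is, the non-trivial identity $xy\approx\mathbf w$ guaranteed by Lemma~\ref{L: M(W) in V} can only be $xy\approx yx$ (your final case); if it is not, the paper invokes an external structural result from Gusev and Vernikov (Lemma~2.4 and Corollary~2.6 of the \emph{Chain varieties of monoids} paper) to conclude that $\mathbf V$ is completely regular, hence --- being aperiodic --- a variety of idempotent monoids, and then $x^ny\approx yx^n$ forces commutativity. You replace that appeal to completely regular structure theory with a purely equational analysis of the shape of $\mathbf w$: the degenerate cases (extra letters, or $\occ_x(\mathbf w)=0$ or $\occ_y(\mathbf w)=0$) collapse the variety to the trivial one via $x^n\approx x^{n+1}$, and the case $\occ_x(\mathbf w)\ge 2$ yields $x\approx x^n$, after which $xy\approx x^ny\approx yx^n\approx yx$ --- which is in substance the same final computation the paper performs for idempotent monoids, reached without any citation. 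The trade-off is that your argument is longer but self-contained and elementary, while the paper's is shorter at the cost of importing a non-trivial fact about completely regular varieties; all of your individual steps (the iteration $x\approx x^{1+m(a-1)}$, the collapse of high powers under $x^n\approx x^{n+1}$, the exhaustiveness of the four cases) check out.
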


\begin{proof}
If $x$ is an isoterm for $\mathbf V$, then $\mathbf V$ satisfies $xy\approx yx$ by Lemma~\ref{L: M(W) in V}.
If $x$ is not an isoterm for $\mathbf V$, then $\mathbf V$ is \textit{completely regular}, i.e., $\mathbf V$ consists of unions of groups by~\cite[Lemma~2.4 and Corollary~2.6]{Gusev-Vernikov-18}. 
It is well known that every completely regular variety of aperiodic monoids consists of idempotent monoids.
In view of Lemma~\ref{L: subvariety of A_cen}, $\mathbf V$ satisfies the identity $x^ny\approx yx^n$ for some $n\ge1$.
It remains to notice that every idempotent monoid satisfying this identity is commutative.
\end{proof}

\begin{corollary}
\label{C: xy=yx in X wedge Y}
Let $\mathbf X$ and $\mathbf Y$ be subvarieties of $\mathbf A_\mathsf{cen}$. 
If $\mathbf X\wedge\mathbf Y$ is commutative, then either $\mathbf X$ or $\mathbf Y$ is commutative.
\end{corollary}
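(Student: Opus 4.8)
The plan is to argue by contraposition and reduce everything to Lemma~\ref{L: does not contain M(xy)}. Assume that neither $\mathbf X$ nor $\mathbf Y$ is commutative; the goal is to show that $\mathbf X\wedge\mathbf Y$ is not commutative either.

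First I would apply the contrapositive of Lemma~\ref{L: does not contain M(xy)} to each of the two varieties. Both $\mathbf X$ and $\mathbf Y$ are subvarieties of $\mathbf A_\mathsf{cen}$ by hypothesis, so the lemma is applicable to each of them; since $\mathbf X$ is non-commutative it must contain $M(xy)$, and likewise $M(xy)\in\mathbf Y$. As $M(xy)$ is then a common member of $\mathbf X$ and $\mathbf Y$, it lies in $\mathbf X\wedge\mathbf Y$.

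Next I would observe that $M(xy)$ is itself non-commutative: the factors of the word $xy$ are exactly $1$, $x$, $y$ and $xy$, and $yx$ is not among them, so in $M(xy)$ we have $y\cdot x=0\neq xy=x\cdot y$. Hence $\mathbf X\wedge\mathbf Y$ contains a non-commutative monoid and therefore violates the identity $xy\approx yx$; that is, $\mathbf X\wedge\mathbf Y$ is not commutative. This is precisely the contrapositive of the assertion to be proved.

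I do not expect any genuine obstacle: essentially all the content has already been packaged into Lemma~\ref{L: does not contain M(xy)}, and what remains is the elementary observation that $M(xy)$ is non-commutative together with the trivial monotonicity fact that a monoid belonging to both $\mathbf X$ and $\mathbf Y$ belongs to their meet. The only point requiring a modicum of care is to make sure the contrapositive is formulated correctly, with both hypotheses negated and the conclusion negated.
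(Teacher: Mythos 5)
Your proposal is correct and is essentially the paper's own argument: both proofs pass to the contrapositive, use Lemma~\ref{L: does not contain M(xy)} to place $M(xy)$ in $\mathbf X\wedge\mathbf Y$, and conclude from the non-commutativity of $M(xy)$ (since $yx$ is not a factor of $xy$, so $y\cdot x=0\neq x\cdot y$). No gaps.
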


\begin{proof}
If both $\mathbf X$ and $\mathbf Y$ are non-commutative, then $M(xy)\in\mathbf X\wedge\mathbf Y$ by Lemma~\ref{L: does not contain M(xy)}.
To complete the proof, it remains to notice that $M(xy)$ and so $\mathbf X\wedge\mathbf Y$ are non-commutative.
\end{proof}

\subsection{Some known results}
The following fact is obvious.

\begin{lemma}
\label{L: x^n is an isoterm}
Let $\mathbf V$ be a monoid variety and $n\in\mathbb N$. 
Then $x^n$ is not an isoterm for $\mathbf V$ if and only if $\mathbf V$ satisfies the identity $x^n\approx x^m$ for some $m>n$.\qed
\end{lemma}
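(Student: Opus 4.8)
Both implications are elementary. The ``if'' direction is immediate from the definition of isoterm: if $\mathbf V$ satisfies $x^n\approx x^m$ for some $m>n$, then $x^n\approx x^m$ is a non-trivial identity of the form $x^n\approx\mathbf w'$ (here $\mathbf w'=x^m\ne x^n$) that holds in $\mathbf V$, so $x^n$ is not an isoterm for $\mathbf V$.

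For the ``only if'' direction, suppose $x^n$ is not an isoterm for $\mathbf V$, so that $\mathbf V$ satisfies some identity $x^n\approx\mathbf w'$ with $\mathbf w'\ne x^n$. The idea is to specialise this identity, by substitutions, to one over the single letter $x$, and then, if necessary, lengthen the right-hand side by multiplying it on the right by a power of $x$; both operations keep an identity valid in $\mathbf V$. First, applying the substitution that sends every letter to $x$ gives $\mathbf V\models x^n\approx x^{\ell}$, where $\ell$ is the length of $\mathbf w'$. Second, applying the substitution that fixes $x$ and sends every other letter to the empty word gives $\mathbf V\models x^n\approx x^{k}$, where $k$ is the number of occurrences of $x$ in $\mathbf w'$; evidently $k\le\ell$.

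Now we use the hypothesis $\mathbf w'\ne x^n$: it cannot happen that $\ell=k=n$, since this would force $\mathbf w'$ to be a word of length $n$ in the single letter $x$, i.e., $\mathbf w'=x^n$. Hence at least one of $k,\ell$ differs from $n$; call such a value $j$, so that $\mathbf V\models x^n\approx x^{j}$ with $j\ne n$. If $j>n$, take $m=j$. If $j<n$, then multiplying both sides of $x^n\approx x^{j}$ on the right by $x^{n-j}$ yields $\mathbf V\models x^n\approx x^{2n-j}$, and we take $m=2n-j>n$. Either way $\mathbf V$ satisfies $x^n\approx x^m$ for some $m>n$, as required.

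I do not foresee a genuine obstacle, the statement being routine as the authors observe. The only step that needs a moment's care is the borderline case $\ell=n$: there the substitution collapsing all letters to $x$ produces merely the trivial identity $x^n\approx x^n$, and one must instead note that $\mathbf w'$, having length $n$ but differing from $x^n$, contains a letter other than $x$ --- which is exactly what makes $k<n$ and lets the second substitution complete the proof.
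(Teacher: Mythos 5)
Your proof is correct, and it supplies exactly the routine argument the paper suppresses (the lemma is stated there as ``obvious'' with no proof): specialise the witnessing identity $x^n\approx\mathbf w'$ by the two substitutions, observe that not both resulting exponents can equal $n$, and pad by a power of $x$ if the surviving exponent falls below $n$. The borderline cases you flag ($\ell=n$, and implicitly $k=0$, where one gets $x^n\approx 1$ and then $x^n\approx x^{2n}$) are handled correctly.
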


\begin{lemma}[{Gusev and Vernikov~\cite[Lemma~2.10]{Gusev-Vernikov-21}}]
\label{L: x^n=x^m in X wedge Y}
Let $\mathbf X$ and $\mathbf Y$ be aperiodic monoid varieties. 
If $\mathbf X\wedge\mathbf Y$ satisfies a the identity $x^n\approx x^m$ for some $n,m\in\mathbb N$, then this identity holds in either $\mathbf X$ or $\mathbf Y$.\qed
\end{lemma}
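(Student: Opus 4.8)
The plan is to express the hypothesis as the statement that the word $x^n$ is not an isoterm, to split this over $\mathbf X$ and $\mathbf Y$ via membership of the factor monoid $M(x^n)$, and then to use aperiodicity to recover the \emph{exact} identity $x^n\approx x^m$ rather than merely some identity of the form $x^n\approx x^{m'}$. We may assume $n\ne m$, say $n<m$, since otherwise there is nothing to prove.

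First I would note that, since $\mathbf X\wedge\mathbf Y$ satisfies $x^n\approx x^m$ with $m>n$, Lemma~\ref{L: x^n is an isoterm} shows that $x^n$ is not an isoterm for $\mathbf X\wedge\mathbf Y$, whence $M(x^n)\notin\mathbf X\wedge\mathbf Y$ by Lemma~\ref{L: M(W) in V}. As $\mathbf X\wedge\mathbf Y=\mathbf X\cap\mathbf Y$, this forces $M(x^n)\notin\mathbf X$ or $M(x^n)\notin\mathbf Y$; without loss of generality, $M(x^n)\notin\mathbf X$. Applying Lemmas~\ref{L: M(W) in V} and~\ref{L: x^n is an isoterm} once more, $x^n$ is not an isoterm for $\mathbf X$, so $\mathbf X$ satisfies $x^n\approx x^{m'}$ for some $m'>n$. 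Passing through the concrete monoid $M(x^n)$ is essential here: an identity valid in $\mathbf X\cap\mathbf Y$ need not be valid in $\mathbf X$ or in $\mathbf Y$, whereas membership of a fixed monoid in the meet does split.

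It remains to upgrade $x^n\approx x^{m'}$ to $x^n\approx x^m$ inside $\mathbf X$, and this is where aperiodicity is used. From $x^n\approx x^{m'}$ one deduces $x^{n+j}\approx x^{n+j+(m'-n)}$ for every $j\ge0$, so the relatively free monoid $F$ of $\mathbf X$ on one generator $g$ is a finite monogenic monoid; let $r\ge0$ and $p\ge1$ be its index and period. The periodic powers of $g$ form a subgroup of $F$ of order $p$, so $p=1$ because $F\in\mathbf X$ is aperiodic. Since $g^n=g^{m'}$ with $m'>n$, the power $g^n$ is periodic, hence $n\ge r$; together with $p=1$ this gives $g^n=g^{r}g^{n-r}=g^{r+1}g^{n-r}=g^{n+1}$ in $F$, that is, $\mathbf X\models x^n\approx x^{n+1}$, and therefore $\mathbf X\models x^n\approx x^m$, as required.

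I expect the only genuine step to be the last one — invoking the index/period structure of a finite monogenic monoid together with the fact that aperiodicity forces period $1$ — while the isoterm bookkeeping in the first two paragraphs is routine once Lemmas~\ref{L: M(W) in V} and~\ref{L: x^n is an isoterm} are in hand.
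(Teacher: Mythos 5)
Your proof is correct. Note that the paper itself supplies no argument for this lemma --- it is quoted from Gusev and Vernikov \cite[Lemma~2.10]{Gusev-Vernikov-21} with a \textup{\qedsymbol} in the statement --- so there is nothing in the present text to compare against line by line. Your route is a natural and self-contained one: the passage through the concrete monoid $M(x^n)$ via Lemmas~\ref{L: M(W) in V} and~\ref{L: x^n is an isoterm} is exactly the right device for splitting a property of $\mathbf X\wedge\mathbf Y$ into a property of $\mathbf X$ or of $\mathbf Y$ (the cited source's argument can instead be run through the deduction sequence of Proposition~\ref{P: deduction}, looking at the first nontrivial step $x^n\approx\mathbf w_1$ and substituting letters, but the two devices buy the same thing). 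The second half --- upgrading $x^n\approx x^{m'}$ to $x^n\approx x^{n+1}$ via the index--period structure of the finite monogenic relatively free monoid and the fact that aperiodicity kills the period --- is also sound: $g^n=g^{m'}$ with $m'>n$ forces $g^n$ into the kernel, which is trivial as a group, so $g^n=g^{n+1}$ and hence $x^n\approx x^m$ follows for the given $m>n$. The only cosmetic caveat is that the index $r$ of the cyclic submonoid generated by $g$ could in principle be $0$ (making $F$ a group, hence trivial by aperiodicity), but your argument goes through verbatim in that degenerate case, so no repair is needed.
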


\begin{proposition}[{Head~\cite{Head-68}}]
\label{P: commutative}
Each commutative monoid variety can be defined by the identities $xy\approx yx$ and $x^n\approx x^m$ for some $n,m\in\mathbb N$.\qed
\end{proposition}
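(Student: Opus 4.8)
The plan is to show that every variety $\mathbf V$ of commutative monoids is generated, as a variety, by its relatively free monoid $C$ of rank one, and then to recognise the variety generated by $C$ as one of the asserted form. Being commutative and monogenic, $C$ is a quotient of the free monogenic monoid $(\mathbb N_0,+)$, so by the standard description of monogenic monoids either $C\cong\mathbb N_0$ --- which happens exactly when $\mathbf V$ satisfies no nontrivial one-variable identity --- or $C\cong C_{n,p}$, where $C_{n,p}$ denotes $\mathbb N_0$ factored by the congruence $\theta_{n,p}$ that identifies $a$ and $b$ precisely when $a=b$, or when $a,b\ge n$ and $a\equiv b\pmod p$. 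In the latter case $n\in\mathbb N_0$ is the least number for which $\mathbf V$ satisfies a nontrivial identity $x^n\approx x^m$, and $p\in\mathbb N$ is then the least number with $\mathbf V\models x^n\approx x^{n+p}$. If $C\cong\mathbb N_0$, then $\mathbf V$ is the variety of all commutative monoids: any identity of $\mathbf V$ that does not follow from $xy\approx yx$ has two sides with different commutative normal forms, hence differs in the number of occurrences of some letter, and substituting $1$ for every other letter produces a nontrivial one-variable identity of $\mathbf V$ --- a contradiction; so $\mathbf V=\var\{xy\approx yx\}$, which is of the required form (take $n=m=1$).

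Now suppose $C\cong C_{n,p}$, and put $\mathbf W:=\var\{xy\approx yx,\ x^n\approx x^{n+p}\}$. Since $x^n=x^{n+p}$ holds in $C$, the variety $\mathbf V$ satisfies $x^n\approx x^{n+p}$, whence $\mathbf V\subseteq\mathbf W$; on the other hand $C_{n,p}\in\mathbf V$, so the variety generated by $C_{n,p}$ is contained in $\mathbf V$. It therefore suffices to prove that $\mathbf W$ equals the variety generated by $C_{n,p}$: this traps $\mathbf V$ between them and gives $\mathbf V=\mathbf W=\var\{xy\approx yx,\ x^n\approx x^{n+p}\}$ with $m:=n+p$, completing the proof. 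One inclusion is immediate because $C_{n,p}$ is commutative and satisfies $x^n\approx x^{n+p}$. For the other, I would describe the relatively free monoids of $\mathbf W$: the relatively free monoid of $\mathbf W$ of rank $k$ is the free commutative monoid on $k$ letters modulo the congruence generated by the relations $x_i^n\approx x_i^{n+p}$, and the key claim is that this congruence is precisely the \emph{coordinatewise} congruence $\rho$ identifying two words whenever, for every letter, their two exponents are $\theta_{n,p}$-related. Given this, that relatively free monoid is the direct power $(C_{n,p})^k$, so all relatively free monoids of $\mathbf W$ --- and hence $\mathbf W$ itself --- lie in the variety generated by $C_{n,p}$.

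The heart of the argument, and the step I expect to be the main obstacle, is the key claim that $x^n\approx x^{n+p}$ has only coordinatewise consequences over commutative monoids. That $\rho$ is a congruence closed under all endomorphisms of the free commutative monoid is a routine verification; the only mildly delicate endomorphism is the one that merges two letters, where one uses that in the coordinate that changes both exponents stay $\ge n$. For the opposite direction one analyses a deduction from $x^n\approx x^{n+p}$ in the sense of Proposition~\ref{P: deduction}: read inside the commutative monoid, a single step multiplies the current word by, or divides it by, the $p$-th power of some word $\mathbf w$, the dividing operation being available only when $\mathbf w^n$ is already a factor; it follows at once that deduction-related words are $\rho$-related, and conversely any two $\rho$-related words can be connected by such steps, handled one letter at a time. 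All of this is elementary and combinatorial, but one must keep careful track of the threshold $n$; the boundary cases $n=0$ (where $C_{n,p}$ is the cyclic group of order $p$) and $n=m$ (where $\mathbf V$ is the variety of all commutative monoids) deserve separate mention.
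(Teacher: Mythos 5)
The paper offers no proof of this proposition at all --- it is quoted from Head's 1968 article and closed immediately --- so there is nothing internal to compare against; judged on its own, your argument is correct and is essentially the classical one. The sandwich $\var(C_{n,p})\subseteq\mathbf V\subseteq\mathbf W$ combined with the identification of the rank-$k$ relatively free monoid of $\mathbf W$ with the direct power $C_{n,p}^{\,k}$ is sound, and your treatment of the key claim is exactly the right combinatorial content: modulo commutativity a deduction step multiplies or divides by the $p$-th power of a word whose $n$-th power already divides the current word, hence preserves the coordinatewise relation $\rho$, and conversely $\rho$-related words are connected one letter at a time using $x^n\approx x^{n+p}$. The substitution-of-$1$ argument in the case $C\cong\mathbb N_0$ is the one genuinely monoid-specific step and you have it. One point, however, should be promoted from a ``boundary case deserving separate mention'' to an actual correction: what your proof establishes is the statement with $n,m\in\mathbb N_0$, and that is the form in which it is true. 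With $n,m\in\mathbb N$ as printed, the proposition fails: the two-element semilattice monoid satisfies $xy\approx yx$ together with every identity $x^n\approx x^m$ having $n,m\ge 1$, so no commutative variety omitting it --- for instance the trivial variety (defined by $x\approx x^0$) or the variety of abelian groups of exponent dividing $p$ (defined by $x^p\approx x^0$) --- can be given by identities of the stated form. Your case $n=0$ is precisely where the exponent $0$, i.e.\ the empty word, is unavoidable, so you should state your conclusion with $\mathbb N_0$ rather than defer that case; in the aperiodic setting of the present paper the discrepancy is harmless, which is presumably why it went unnoticed.
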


The \textit{content} of a word $\mathbf w$, that is, the set of all letters occurring in $\mathbf w$ is denoted by $\con(\mathbf w)$.
For a word $\mathbf w$ and a letter $x$, let $\occ_x(\mathbf w)$ denote the number of occurrences of $x$ in $\mathbf w$. 
A letter $x$ is called \textit{simple} [\textit{multiple}] \textit{in a word} $\mathbf w$ if $\occ_x(\mathbf w)=1$ [respectively, $\occ_x(\mathbf w)>1$]. 
The set of all simple [multiple] letters of a word $\mathbf w$ is denoted by $\simple(\mathbf w)$ [respectively, $\mul(\mathbf w)$]. 
Let $\mathbf w$ be a word and $\simple(\mathbf w)=\{t_1,t_2,\dots,t_m\}$. 
We will assume without loss of generality that $\mathbf w(t_1,t_2,\dots,t_m)=t_1t_2\cdots t_m$. 
Then $\mathbf w=\mathbf w_0t_1\mathbf w_1\cdots t_m\mathbf w_m$ for some words $\mathbf w_0,\mathbf w_1,\dots,\mathbf w_m$. 
The words $\mathbf w_0$, $\mathbf w_1$, \dots, $\mathbf w_m$ are called \textit{blocks} of the word $\mathbf w$. 
The representation of the word $\mathbf w$ as a product of alternating simple in $\mathbf w$ letters and blocks is called a \textit{decomposition} of the word $\mathbf w$.

\begin{lemma}[{Gusev and Vernikov~\cite[Lemma~2.17]{Gusev-Vernikov-21}}]
\label{L: decompositions of u and v}
Let $\mathbf u\approx\mathbf v$ be an identity of $\mathbf M(xy)$. 
Suppose that $\mathbf u_0\prod_{i=1}^m(t_i\mathbf u_i)$ is the decomposition of the word $\mathbf u$. 
Then $\con(\mathbf u)=\con(\mathbf v)$ and the decomposition of the word $\mathbf v$ has the form $\mathbf v_0\prod_{i=1}^m(t_i\mathbf v_i)$.\qed
\end{lemma}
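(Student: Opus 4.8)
The plan is to argue directly with the five-element monoid $M(xy)$, whose multiplication is extremely restrictive. Explicitly, $M(xy)=\{1,x,y,xy,0\}$, where $1$ is the identity and the \emph{only} products of two elements that do not equal $0$ are $1\cdot g=g\cdot1=g$ for each $g$ and $x\cdot y=xy$; consequently a product $g_1g_2\cdots g_k$ of elements of $M(xy)$ is nonzero precisely when at most one $g_i$ differs from $1$, or exactly two of them differ from $1$ and these are $x$ and $y$, occurring in that left-to-right order. A monoid satisfies an identity if and only if every substitution of its elements for the letters makes the two sides equal, and by hypothesis $\mathbf M(xy)=\var M(xy)$ satisfies $\mathbf u\approx\mathbf v$; so it suffices to test a few carefully chosen substitutions into $M(xy)$ and read off the resulting values.

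First I would prove $\con(\mathbf u)=\con(\mathbf v)$ together with $\simple(\mathbf u)=\simple(\mathbf v)$. Fix a letter $x$ and use the substitution $\phi$ that sends $x$ to the element $x$ of $M(xy)$ and every other letter to $1$. Then $\phi(\mathbf u)=x^{\occ_x(\mathbf u)}$ and $\phi(\mathbf v)=x^{\occ_x(\mathbf v)}$, where in $M(xy)$ we have $x^0=1$, $x^1=x$, and $x^k=0$ for all $k\ge2$. Equality $\phi(\mathbf u)=\phi(\mathbf v)$ therefore forces $\occ_x(\mathbf u)=1\iff\occ_x(\mathbf v)=1$ and $\occ_x(\mathbf u)\ge2\iff\occ_x(\mathbf v)\ge2$. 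In particular $x\in\con(\mathbf u)\iff x\in\con(\mathbf v)$ and $x\in\simple(\mathbf u)\iff x\in\simple(\mathbf v)$, giving both desired equalities.

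Next I would pin down the order of the simple letters. Given distinct $s,t\in\simple(\mathbf u)=\simple(\mathbf v)$, apply the substitution sending $s$ to the element $x$, $t$ to the element $y$, and every other letter to $1$. Reading $\mathbf u$ from left to right, every factor equals $1$ except the single occurrence of $s$, contributing $x$, and the single occurrence of $t$, contributing $y$; hence $\phi(\mathbf u)=xy$ if $s$ precedes $t$ in $\mathbf u$, and $\phi(\mathbf u)=yx=0$ otherwise, and likewise for $\mathbf v$. Since $\phi(\mathbf u)=\phi(\mathbf v)$ and $xy\ne0$, the letters $s$ and $t$ occur in the same relative order in $\mathbf u$ and in $\mathbf v$. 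As this holds for every such pair, the simple letters of $\mathbf v$ are exactly $t_1,t_2,\dots,t_m$ occurring in $\mathbf v$ in precisely this order, so the decomposition of $\mathbf v$ has the claimed form $\mathbf v_0\prod_{i=1}^m(t_i\mathbf v_i)$. No step here is a genuine obstacle; the whole argument reduces to picking the right test substitutions, and the only point demanding a little care is that the content statement must also account for multiple letters, which is why the first substitution is phrased via $\occ_x$ rather than merely ``$x$ occurs''.
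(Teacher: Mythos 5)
Your argument is correct and complete: the two test substitutions into $M(xy)$ (one letter to $x$ and the rest to $1$ to control $\occ$, then a pair of simple letters to $x$ and $y$ to control their relative order) do exactly what is needed, and the observation that $1,x,0$ are distinct values of $x^{\occ_x(\cdot)}$ cleanly yields both $\con(\mathbf u)=\con(\mathbf v)$ and $\simple(\mathbf u)=\simple(\mathbf v)$. The paper itself gives no proof here --- it imports the lemma from Gusev and Vernikov --- so there is nothing to diverge from; your substitution argument is the standard, self-contained way to establish it, and the only cosmetic imprecision (the description of when a product in $M(xy)$ is nonzero ignores factors equal to $xy$ or $0$) is harmless because your substitutions only ever use the values $1$, $x$, $y$.
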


A non-empty word $\mathbf w$ is called \textit{linear} if $\occ_x(\mathbf w)\le 1$ for each letter $x$. 
Let $\mathbf u$ and $\mathbf v$ be words and $\mathbf u_0\prod_{i=1}^m(t_i\mathbf u_i)$ and $\mathbf v_0\prod_{i=1}^m(t_i\mathbf v_i)$ be decompositions of $\mathbf u$ and $\mathbf v$, respectively. 
A letter $x$ is called \textit{linear-balanced in the identity} $\mathbf u\approx\mathbf v$ if $x$ is multiple in $\mathbf u$ and $\occ_x(\mathbf u_i)=\occ_x(\mathbf v_i)\le 1$ for all $i=0,1,\dots,m$; the identity $\mathbf u\approx\mathbf v$ is called \textit{linear-balanced} if any letter $x\in\mul(\mathbf u)\cup\mul(\mathbf v)$ is linear-balanced in this identity. 

\begin{lemma}[{Gusev and Vernikov~\cite[Lemma~3.1]{Gusev-Vernikov-21}}]
\label{L: xt_1x...t_kx is isoterm}
Let $\mathbf V$ be a monoid variety such that the word $\bigl(\prod_{i=1}^k xt_i\bigr)x$ is an isoterm for $\mathbf V$, $\mathbf u$ be a word such that all its blocks are linear words and $\occ_x(\mathbf u)\le k+1$ for every letter $x$. 
Then every identity of the form $\mathbf u\approx\mathbf v$ that holds in the variety $\mathbf V$ is linear-balanced.\qed
\end{lemma}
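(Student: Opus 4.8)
\textbf{Proof proposal for Lemma~\ref{L: xt_1x...t_kx is isoterm}.}

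The plan is to argue by contradiction: suppose $\mathbf u\approx\mathbf v$ holds in $\mathbf V$ but is \emph{not} linear-balanced, so there is a letter $x\in\mul(\mathbf u)\cup\mul(\mathbf v)$ that fails to be linear-balanced in the identity. First I would establish the preliminary structural facts that follow from the hypotheses. Since $\bigl(\prod_{i=1}^kxt_i\bigr)x$ is an isoterm for $\mathbf V$, so is every one of its factors; in particular $xy$ is an isoterm (it is a factor when $k\ge 1$; if $k=0$ the word is just $x$ and then $\mathbf u$ is linear and there is nothing to prove, so we may assume $k\ge 1$). Hence $\mathbf M(xy)\subseteq\mathbf V$, and Lemma~\ref{L: decompositions of u and v} applies to the identity $\mathbf u\approx\mathbf v$: we get $\con(\mathbf u)=\con(\mathbf v)$, the same simple letters $t_1,\dots,t_m$ occur in the same order in both words, and if $\mathbf u_0\prod_{i=1}^m(t_i\mathbf u_i)$ is the decomposition of $\mathbf u$ then $\mathbf v$ has decomposition $\mathbf v_0\prod_{i=1}^m(t_i\mathbf v_i)$. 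Note also that because $\con(\mathbf u)=\con(\mathbf v)$ and simple letters correspond, a letter is multiple in $\mathbf u$ if and only if it is multiple in $\mathbf v$; and every $\mathbf u_i$ is a linear word by hypothesis, whence $\occ_x(\mathbf u_i)\le 1$ for all $i$ and all $x$.

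Next I would reduce the failure of linear-balancedness to a concrete, local discrepancy. Fix a letter $x$ witnessing the failure. Since each block $\mathbf u_i$ is linear, the condition ``$x$ is linear-balanced'' simplifies to ``$\occ_x(\mathbf u_i)=\occ_x(\mathbf v_i)$ for every $i$''; so the failure means $\occ_x(\mathbf u_j)\ne\occ_x(\mathbf v_j)$ for some block index $j$. Because $x$ is multiple in both $\mathbf u$ and $\mathbf v$ and $\occ_x(\mathbf u)=\occ_x(\mathbf v)$ (this count equality follows from $\con$ equality together with the block-by-block structure; more carefully, summing $\occ_x$ over all blocks and the simple-letter positions gives the total, and $x$ being multiple occupies no simple-letter position, so $\sum_i\occ_x(\mathbf u_i)=\occ_x(\mathbf u)=\occ_x(\mathbf v)=\sum_i\occ_x(\mathbf v_i)$), a deficit in one block must be compensated by a surplus in another: there are indices $j\ne j'$ with $\occ_x(\mathbf u_j)=1$, $\occ_x(\mathbf v_j)=0$ and $\occ_x(\mathbf u_{j'})=0$, $\occ_x(\mathbf v_{j'})=1$ (using that all block-occurrence numbers are $0$ or $1$). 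The idea now is to exhibit a substitution that turns $\mathbf u\approx\mathbf v$ into a non-trivial identity violated by the isoterm hypothesis. Apply the substitution $\phi$ that sends $x$ to itself, sends the $k-1$ distinct letters other than $x,t_1,\dots,t_m$ that together with $x$ have at most $k+1$ occurrences --- here I use $\occ_x(\mathbf u)\le k+1$ to control how many ``$x$'' occurrences arise --- and more to the point, substitute: map every letter of $\mathbf u$ that is not $x$ and not among the simple letters $t_{j}$ and $t_{j'}$ sitting at the boundaries of the relevant blocks to the empty word $1$, keep $x\mapsto x$, and relabel the surviving simple letters as fresh $t_i$'s. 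Then $\mathbf u\phi$ and $\mathbf v\phi$ both become words in $x$ and a few simple letters in which $x$ occurs at most $k+1$ times, arranged so that $\mathbf u\phi$ has the shape $\bigl(\prod_{i=1}^{k}xt_i\bigr)x$ (after padding, if necessary, by choosing which occurrences of $x$ in different blocks to retain) while $\mathbf v\phi$ differs from it --- precisely because the block containing the retained occurrences has changed.

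The main obstacle, and the step I would spend the most care on, is \emph{engineering the substitution cleanly}: I must choose which occurrences of $x$ and which simple letters to keep so that (i) the image of $\mathbf u$ is literally (an isoterm-forcing word of the form) $\bigl(\prod_{i=1}^{r}xt_i\bigr)x$ with $r\le k$, so that it is an isoterm for $\mathbf V$ by the hypothesis (every factor of an isoterm is an isoterm), and (ii) the image of $\mathbf v$ is a \emph{different} word, which is exactly where the discrepancy $\occ_x(\mathbf u_j)\ne\occ_x(\mathbf v_j)$ is used --- the simple letter $t_j$ (or $t_{j'}$) that borders block $j$ ends up adjacent to a different pattern of $x$'s in $\mathbf v\phi$ than in $\mathbf u\phi$. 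Since $\mathbf u\phi\approx\mathbf v\phi$ is a consequence of $\mathbf u\approx\mathbf v$ and hence holds in $\mathbf V$, and $\mathbf u\phi$ is an isoterm for $\mathbf V$, we must have $\mathbf u\phi=\mathbf v\phi$, a contradiction. The bound $\occ_x(\mathbf u)\le k+1$ is precisely what guarantees that after deleting all letters but $x$ and the two chosen simple letters, the number of $x$'s does not exceed $k+1$, so the resulting word is a factor of $\bigl(\prod_{i=1}^{k}xt_i\bigr)x$ (possibly after also deleting some $x$'s, which only passes to a further factor and hence still an isoterm). Once the substitution is set up correctly the contradiction is immediate, so the whole weight of the argument rests on this bookkeeping; everything else is a routine application of Lemmas~\ref{L: M(W) in V} and~\ref{L: decompositions of u and v}.
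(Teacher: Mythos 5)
The paper offers no proof of this lemma (it is quoted from Gusev and Vernikov), so your argument must stand on its own, and it has two genuine gaps, both located in the substitution step that you yourself identify as carrying all the weight. First, the equality $\occ_x(\mathbf u)=\occ_x(\mathbf v)$ does not follow from $\con(\mathbf u)=\con(\mathbf v)$ and Lemma~\ref{L: decompositions of u and v}: that lemma controls the set of letters and the positions of the simple letters, not multiplicities, and your parenthetical derivation simply inserts the equality $\occ_x(\mathbf u)=\occ_x(\mathbf v)$ into the middle of the chain it is supposed to establish. The equality is true, but it already requires the isoterm argument. Relatedly, your claim that the ``surplus'' block satisfies $\occ_x(\mathbf v_{j'})=1$ uses linearity of the blocks of $\mathbf v$, which is not a hypothesis; a priori a block of $\mathbf v$ could contain $x$ twice.

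Second, and more seriously, the substitution you describe --- keep $x$ and only the boundary letters $t_j,t_{j'}$, send everything else to $1$ --- generally turns $\mathbf u$ into a word of the form $x^a t_j x^b t_{j'} x^c$ with some exponent at least $2$. Such a word is not a factor of $\bigl(\prod_{i=1}^k xt_i\bigr)x$ and need not be an isoterm: nothing in the hypotheses prevents $\mathbf V$ from satisfying, say, $x^2\approx x^3$ while keeping the alternating word an isoterm. Your proposed repair, ``possibly after also deleting some $x$'s,'' is not a legal move --- a substitution deletes all occurrences of a letter ($x\mapsto 1$) or none of them, never a proper subset. The correct repair uses exactly the hypotheses you have not yet exploited: since every block of $\mathbf u$ is linear, the $\occ_x(\mathbf u)\le k+1$ occurrences of $x$ lie in pairwise distinct blocks and are therefore separated by simple letters; take $X$ to consist of $x$ together with \emph{one} simple letter between each pair of consecutive occurrences of $x$ in $\mathbf u$, choosing the separator adjacent to the offending block so that it witnesses the discrepancy (and, in the extreme cases, one simple letter before the first or after the last occurrence). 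Then $\mathbf u(X)$ is, up to renaming, an alternating factor of $\bigl(\prod_{i=1}^k xt_i\bigr)x$ --- hence an isoterm --- of which $\mathbf v(X)$ is forced to be a copy, which both yields $\occ_x(\mathbf u)=\occ_x(\mathbf v)$ and pins each occurrence of $x$ in $\mathbf v$ to the correct block. With that choice of $X$ your contradiction goes through; as written, the substitution step fails.
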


\begin{lemma}[{Gusev and Vernikov~\cite[Lemma~4.9]{Gusev-Vernikov-18}}]
\label{L: V does not contain M(xyzxty) or M(xtyzxy) or M(xzxyty)}
Let $\mathbf V$ be a monoid variety with $M(xyx)\in\mathbf V$.
\begin{itemize}
\item[\textup{(i)}] If $M(xtyzxy)\notin\mathbf V$, then $\mathbf V$ satisfies the identity $\alpha$.
\item[\textup{(ii)}] If $M(xzxyty)\notin\mathbf V$, then $\mathbf V$ satisfies the identity $\beta$.\qed
\end{itemize}
\end{lemma}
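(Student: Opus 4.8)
The plan is to derive each implication from a description of \emph{all} non-trivial identities that $\mathbf V$ can satisfy with the given word on one side, exploiting the hypothesis $M(xyx)\in\mathbf V$.

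First I would unpack that hypothesis. By Lemma~\ref{L: M(W) in V}, $M(xyx)\in\mathbf V$ is equivalent to $xyx$ being an isoterm for $\mathbf V$. Since any factor of an isoterm is again an isoterm, $xy$ is an isoterm for $\mathbf V$, so $M(xy)\in\mathbf V$ by Lemma~\ref{L: M(W) in V}; and, after renaming the middle letter, the word $\bigl(\prod_{i=1}^{1}xt_i\bigr)x$ is an isoterm for $\mathbf V$. Consequently, for any identity of $\mathbf V$ whose left-hand side has only linear blocks and at most two occurrences of each letter, Lemma~\ref{L: decompositions of u and v} pins down the content of the right-hand side and the arrangement of its simple letters, while Lemma~\ref{L: xt_1x...t_kx is isoterm}, applied with $k=1$, forces the identity to be linear-balanced.

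For part~(i), suppose $M(xtyzxy)\notin\mathbf V$; then $xtyzxy$ is not an isoterm for $\mathbf V$, so $\mathbf V$ satisfies some non-trivial identity $xtyzxy\approx\mathbf v$. In $xtyzxy$ the simple letters are $t$ and $z$, the multiple letters are $x$ and $y$, and the blocks of the decomposition $x\,t\,y\,z\,xy$ are $x$, $y$, $xy$ --- all linear, with each letter occurring at most twice. Hence Lemma~\ref{L: decompositions of u and v} together with Lemma~\ref{L: xt_1x...t_kx is isoterm} (with $k=1$) give $\con(\mathbf v)=\{x,y,z,t\}$, a decomposition $\mathbf v=\mathbf v_0\,t\,\mathbf v_1\,z\,\mathbf v_2$, and, for each index $i$, $\occ_x(\mathbf v_i)$ and $\occ_y(\mathbf v_i)$ equal to the corresponding block values for $xtyzxy$. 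Since a block contains only multiple letters, reading this off block by block leaves only $\mathbf v_0=x$, $\mathbf v_1=y$ and $\mathbf v_2\in\{xy,yx\}$; non-triviality rules out $\mathbf v_2=xy$, so $\mathbf V\models xtyzxy\approx xtyzyx$, which is the identity $\alpha$ after interchanging the letters $t$ and $z$. Part~(ii) is the same argument applied to $xzxyty$, whose simple letters are $z,t$, multiple letters $x,y$, and blocks $x$, $xy$, $y$: one obtains $\mathbf v=\mathbf v_0\,z\,\mathbf v_1\,t\,\mathbf v_2$ with $\mathbf v_0=x$, $\mathbf v_2=y$ and $\mathbf v_1\in\{xy,yx\}$, and non-triviality forces $\mathbf v_1=yx$, i.e.\ $\mathbf V\models xzxyty\approx xzyxty$, which is precisely $\beta$.

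I do not anticipate a genuine obstacle here: once the isoterm consequences of $M(xyx)\in\mathbf V$ have been extracted, the whole argument reduces to routine bookkeeping with decompositions. The only points requiring mild care are verifying that the two left-hand words meet the hypotheses of Lemma~\ref{L: xt_1x...t_kx is isoterm} (all blocks linear, each letter occurring at most $k+1=2$ times) and recognizing, in part~(i), that the identity obtained is just a renaming of $\alpha$.
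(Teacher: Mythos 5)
Your argument is correct. Note, however, that the paper itself offers no proof to compare against: this lemma is imported verbatim from Gusev and Vernikov~\cite[Lemma~4.9]{Gusev-Vernikov-18} and stated with a \textsf{qed} mark, so what you have produced is a self-contained derivation of a cited external result. The derivation is sound: from $M(xyx)\in\mathbf V$ and Lemma~\ref{L: M(W) in V} you correctly extract that $xyx$, hence its factor $xy$, is an isoterm (a non-trivial identity $xy\approx\mathbf w$ would yield the non-trivial identity $xyx\approx\mathbf wx$ by cancellativity of the free monoid), so both Lemma~\ref{L: decompositions of u and v} and Lemma~\ref{L: xt_1x...t_kx is isoterm} with $k=1$ apply to any non-trivial identity $xtyzxy\approx\mathbf v$ or $xzxyty\approx\mathbf v$; the decomposition lemma fixes the content and the placement of the simple letters $t,z$, linear-balancedness fixes the multiset of each block of $\mathbf v$, the two singleton blocks are then forced, and non-triviality leaves only the transposed two-letter block, giving $\alpha$ (after the renaming $t\leftrightarrow z$) and $\beta$ respectively. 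The only points that genuinely needed care --- that factors of isoterms are isoterms, and that the hypotheses of Lemma~\ref{L: xt_1x...t_kx is isoterm} (linear blocks, at most $k+1=2$ occurrences per letter) are met --- are both addressed.
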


The subvariety of a variety $\mathbf V$ defined by a set $\Sigma$ of identities is denoted by $\mathbf V \Sigma$.

\begin{lemma}[{Gusev and Vernikov~\cite[Lemma~2.19]{Gusev-Vernikov-21}}]
\label{L: smth imply distributivity}
Let $\mathbf V$ be a variety. 
Suppose that there is a set $\Sigma$ of identities such that:
\begin{itemize}
\item[\textup{(i)}] if $\mathbf U\subseteq\mathbf V$, then $\mathbf U=\mathbf V\Phi$ for some subset $\Phi$ of $\Sigma$;
\item[\textup{(ii)}] if $\mathbf U,\mathbf U^\prime\subseteq\mathbf V$ and $\mathbf U\wedge\mathbf U^\prime$ satisfies an identity $\sigma\in\Sigma$, then $\sigma$ holds in either $\mathbf U$ or $\mathbf U^\prime$.
\end{itemize}
Then the the lattice $\mathfrak L(\mathbf V)$ is distributive.\qed
\end{lemma}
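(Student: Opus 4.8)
The plan is to construct an injective lattice homomorphism from $\mathfrak L(\mathbf V)$ into the (dual of the) power set lattice on $\Sigma$, and then invoke the fact that any lattice embeddable into a distributive lattice is itself distributive. Concretely, for each subvariety $\mathbf U\subseteq\mathbf V$ I would set
$$
S(\mathbf U)=\{\sigma\in\Sigma\mid \mathbf U\text{ satisfies }\sigma\}\subseteq\Sigma,
$$
and aim to show that $S$ is injective, sends $\wedge$ to $\cup$, and sends $\vee$ to $\cap$.

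First I would establish injectivity of $S$ using hypothesis~(i). Given $\mathbf U\subseteq\mathbf V$, hypothesis~(i) provides $\Phi\subseteq\Sigma$ with $\mathbf U=\mathbf V\Phi$. Every identity of $\Phi$ holds in $\mathbf U$, so $\Phi\subseteq S(\mathbf U)$ and hence $\mathbf V S(\mathbf U)\subseteq\mathbf V\Phi=\mathbf U$; the reverse inclusion $\mathbf U\subseteq\mathbf VS(\mathbf U)$ is immediate since $\mathbf U$ satisfies every identity of $S(\mathbf U)$ by definition. Thus $\mathbf U=\mathbf VS(\mathbf U)$, which recovers $\mathbf U$ from $S(\mathbf U)$ and so forces $S$ to be injective.

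Next I would compute the effect of $S$ on the lattice operations. For the join I would use the standard fact that $\mathbf U\vee\mathbf U'$ satisfies an identity precisely when both $\mathbf U$ and $\mathbf U'$ satisfy it; restricting to members of $\Sigma$ gives $S(\mathbf U\vee\mathbf U')=S(\mathbf U)\cap S(\mathbf U')$ at once. For the meet, the inclusion $S(\mathbf U)\cup S(\mathbf U')\subseteq S(\mathbf U\wedge\mathbf U')$ is trivial because $\mathbf U\wedge\mathbf U'$ is contained in each of $\mathbf U,\mathbf U'$ and therefore inherits all their identities. The reverse inclusion is exactly where hypothesis~(ii) enters: if $\sigma\in\Sigma$ holds in $\mathbf U\wedge\mathbf U'$, then by~(ii) it already holds in $\mathbf U$ or in $\mathbf U'$, whence $\sigma\in S(\mathbf U)\cup S(\mathbf U')$. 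This yields $S(\mathbf U\wedge\mathbf U')=S(\mathbf U)\cup S(\mathbf U')$. I expect this meet computation, and in particular the indispensable role of~(ii), to be the crux of the argument: without~(ii) the meet of two subvarieties could satisfy an identity from $\Sigma$ satisfied by neither of them, and $S$ would fail to respect meets.

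Finally I would assemble these facts. The map $S$ is an injection of $\mathfrak L(\mathbf V)$ into $\mathcal P(\Sigma)$ carrying $\wedge$ to $\cup$ and $\vee$ to $\cap$; that is, $S$ is a lattice embedding of $\mathfrak L(\mathbf V)$ into the lattice $(\mathcal P(\Sigma);\cup,\cap)$, the dual of the Boolean power set lattice. Since the power set lattice is distributive and distributivity is self-dual, the target is distributive, and for any $\mathbf X,\mathbf Y,\mathbf Z\subseteq\mathbf V$ one computes
$$
S\bigl(\mathbf X\wedge(\mathbf Y\vee\mathbf Z)\bigr)=S(\mathbf X)\cup\bigl(S(\mathbf Y)\cap S(\mathbf Z)\bigr)=\bigl(S(\mathbf X)\cup S(\mathbf Y)\bigr)\cap\bigl(S(\mathbf X)\cup S(\mathbf Z)\bigr)=S\bigl((\mathbf X\wedge\mathbf Y)\vee(\mathbf X\wedge\mathbf Z)\bigr).
$$
Injectivity of $S$ then gives $\mathbf X\wedge(\mathbf Y\vee\mathbf Z)=(\mathbf X\wedge\mathbf Y)\vee(\mathbf X\wedge\mathbf Z)$, so $\mathfrak L(\mathbf V)$ satisfies the distributive law, as required.
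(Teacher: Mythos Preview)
Your argument is correct: the map $S(\mathbf U)=\{\sigma\in\Sigma\mid\mathbf U\models\sigma\}$ is a lattice embedding of $\mathfrak L(\mathbf V)$ into the dual of $\mathcal P(\Sigma)$, with hypothesis~(i) yielding $\mathbf U=\mathbf VS(\mathbf U)$ and hence injectivity, and hypothesis~(ii) giving the nontrivial inclusion in $S(\mathbf U\wedge\mathbf U')=S(\mathbf U)\cup S(\mathbf U')$.

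Note, however, that the present paper does not supply a proof of this lemma at all: it is stated with a \qed and attributed to \cite[Lemma~2.19]{Gusev-Vernikov-21}. So there is no in-paper proof to compare against; your proof is the standard one and is exactly what one expects the cited reference to contain.
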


\section{Certain varieties with non-distributive subvariety lattice}
\label{Sec: non-distributive}

\subsection{The variety $\mathbf M(x^2y,yx^2)$}
\label{Subsec: M(x^2y,yx^2)}

Let $\mathbf T$ and $\mathbf{SL}$ denote the variety of trivial monoids and the variety of all semilattice monoids, respectively.

\begin{proposition}[{Gusev and Lee~\cite[Proposition~3.1]{Gusev-Lee-21}}]
\label{P: M(x^2y,yx^2)}
The lattice $\mathfrak L(\mathbf M(x^2y,yx^2))$ is given in Fig.~\textup{\ref{L(M(x^2y,yx^2))}}.
In particular, this lattice is modular but not distributive.\qed
\end{proposition}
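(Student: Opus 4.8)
The plan is to compute the lattice $\mathfrak L(\mathbf M(x^2y,yx^2))$ in full and then read off from the resulting diagram that it is modular but not distributive. I would begin with a concrete description of the generating monoid: $M(x^2y,yx^2)$ consists of the zero together with the eight factors $1,x,y,x^2,xy,yx,x^2y,yx^2$; its only idempotents are $1$ and $0$, so it is aperiodic with central idempotents, and by Lemma~\ref{L: subvariety of A_cen} it satisfies $x^3\approx x^4$ and $x^3y\approx yx^3$. Since the word set $\{x^2y,yx^2\}$ is closed under reversal, the monoid and hence the entire lattice $\mathfrak L(\mathbf M(x^2y,yx^2))$ is self-dual, which halves the bookkeeping below.

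Next I would pin down a finite identity basis for $\mathbf M(x^2y,yx^2)$. Taking $x^3\approx x^4$ and $x^3y\approx yx^3$ as a starting point, I would add the finitely many identities needed to describe the only permitted rearrangements of a twice-occurring letter and the only permitted collapses of blocks, and prove completeness in the standard way: every word can be driven, modulo the proposed basis, to a normal form, and distinct normal forms are separated by some substitution into $M(x^2y,yx^2)$. Lemma~\ref{L: M(W) in V} is the key bridge for everything that follows, since it translates ``$M(W)\in\mathbf V$'' into ``every word of $W$ is an isoterm for $\mathbf V$'', and the basis tells us exactly which words are isoterms for which subvariety.

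The core of the argument is the enumeration of all subvarieties $\mathbf V\le\mathbf M(x^2y,yx^2)$, split into two cases. If $\mathbf V$ is commutative, then by Proposition~\ref{P: commutative} it is defined by $xy\approx yx$ and some $x^n\approx x^m$; combining this with the identities of $\mathbf M(x^2y,yx^2)$ leaves only a short chain of possibilities near the bottom of the lattice. If $\mathbf V$ is non-commutative, then $M(xy)\in\mathbf V$ by Lemma~\ref{L: does not contain M(xy)}, so every identity of $\mathbf V$ obeys the decomposition-matching constraint of Lemma~\ref{L: decompositions of u and v}; and because every letter occurs at most twice in any factor of $x^2y$ or $yx^2$, Lemma~\ref{L: xt_1x...t_kx is isoterm} forces these identities to be linear-balanced, which leaves only finitely many candidate bases. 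Each resulting subvariety is then realised as a concrete $\mathbf M(W)$ --- the natural candidates being $\mathbf M(xy)$, $\mathbf M(x^2y)$, its dual $\mathbf M(x^2y)^\delta=\mathbf M(yx^2)$, and one or two commutative factor-monoid varieties --- or as a join of such; inclusions and non-inclusions are checked by the isoterm test of Lemma~\ref{L: M(W) in V}, and meets by Lemmas~\ref{L: x^n=x^m in X wedge Y} and~\ref{C: xy=yx in X wedge Y}. This yields the Hasse diagram of Fig.~\ref{L(M(x^2y,yx^2))}.

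With the finite lattice in hand, the final assertions are immediate: modularity follows from checking that no five elements of the diagram form a copy of the pentagon $N_5$, and non-distributivity follows from exhibiting a copy of the diamond $M_3$ --- three pairwise-incomparable subvarieties with a common meet and a common join (by self-duality, $\mathbf M(x^2y)$ and $\mathbf M(x^2y)^\delta$ together with a suitable self-dual subvariety between $\mathbf M(xy)$ and $\mathbf M(x^2y,yx^2)$ are the natural candidates), or equivalently by displaying subvarieties $\mathbf A,\mathbf B,\mathbf C$ for which $\mathbf A\wedge(\mathbf B\vee\mathbf C)$ lies strictly above $(\mathbf A\wedge\mathbf B)\vee(\mathbf A\wedge\mathbf C)$. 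I expect the genuine difficulty to be the completeness of the subvariety enumeration --- ruling out any subvariety not on the list --- which requires the full linear-balanced case analysis; fixing the basis in the second step, and the $N_5$/$M_3$ inspection in the last step, should be comparatively routine.
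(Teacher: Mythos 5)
First, a point of reference: the paper does not prove Proposition~\ref{P: M(x^2y,yx^2)} at all --- it is imported with a \qed from Gusev and Lee~\cite[Proposition~3.1]{Gusev-Lee-21} --- so there is no internal argument to measure you against. Your overall plan (describe the nine-element monoid explicitly, use the self-duality coming from the fact that $\{x^2y,yx^2\}$ is closed under reversal, enumerate all subvarieties, then read modularity and non-distributivity off the finite diagram, with the $M_3$ sitting on the three coatoms of Fig.~\ref{L(M(x^2y,yx^2))}) is the standard route and is in the spirit of the cited source; the concrete facts you state about the monoid (nine elements, only idempotents $1$ and $0$, satisfaction of $x^3\approx x^4$ and $x^3y\approx yx^3$) are all correct.

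There is, however, a genuine gap at the one step that carries all the weight: the claim that Lemma~\ref{L: xt_1x...t_kx is isoterm} forces every identity of a non-commutative subvariety to be linear-balanced, ``which leaves only finitely many candidate bases.'' That lemma applies only to identities $\mathbf u\approx\mathbf v$ in which \emph{all blocks of $\mathbf u$ are linear} and $\occ_x(\mathbf u)\le k+1$, and it additionally requires $\bigl(\prod_{i=1}^k xt_i\bigr)x$ to be an isoterm for the variety. Neither hypothesis is available here in the generality you need: the variety $\mathbf M(x^2y,yx^2)$ itself satisfies $x^3\approx x^4$ and $xyxy\approx yxyx$, and neither identity is linear-balanced (their left-hand sides have non-linear blocks), so your asserted constraint is already violated by the ambient variety; moreover $xt_1xt_2x$ is not an isoterm for $\mathbf M(x^2y,yx^2)$ (it satisfies $xt_1xt_2x\approx x^2t_1xt_2$), and $xtx$ is not an isoterm for subvarieties such as $\mathbf M(xy)$, so the lemma is unusable below the $\mathbf M(xyx)$ level and usable only with $k=1$ above it. Consequently the finiteness of your enumeration --- which you yourself identify as the genuine difficulty --- is not established by the tools you invoke; one must instead analyse directly which identities in at most two multiple letters, with exponents controlled by $x^3\approx x^4$, can hold in a subvariety, and that case analysis is the actual content of the Gusev--Lee proof. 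The remaining ingredients you cite (Lemma~\ref{L: M(W) in V} for inclusions, Proposition~\ref{P: commutative} for the commutative chain $\mathbf T\subset\mathbf{SL}\subset\mathbf M(x)\subset\mathbf M(x^2)$, and the diamond on the three coatoms for non-distributivity) are used correctly.
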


\begin{figure}[htb]
\unitlength=1mm
\linethickness{0.4pt}
\begin{center}
\begin{picture}(44,69)
\put(26,5){\circle*{1.33}}
\put(26,15){\circle*{1.33}}
\put(26,25){\circle*{1.33}}
\put(16,35){\circle*{1.33}}
\put(36,35){\circle*{1.33}}
\put(6,45){\circle*{1.33}}
\put(26,45){\circle*{1.33}}
\put(16,55){\circle*{1.33}}
\put(36,55){\circle*{1.33}}
\put(26,55){\circle*{1.33}}
\put(26,65){\circle*{1.33}}

\put(36,50){\vector(-2,1){9}}

\put(26,5){\line(0,1){20}}
\put(26,25){\line(1,1){10}}
\put(26,25){\line(-1,1){20}}
\put(16,35){\line(1,1){20}}
\put(36,35){\line(-1,1){20}}
\put(6,45){\line(1,1){20}}
\put(26,45){\line(0,1){20}}
\put(36,55){\line(-1,1){10}}

\put(27,24){\makebox(0,0)[lc]{$\mathbf M(x)$}}
\put(37,35){\makebox(0,0)[lc]{$\mathbf M(x^2)$}}
\put(15,34){\makebox(0,0)[rc]{$\mathbf M(xy)$}}
\put(5,44){\makebox(0,0)[rc]{$\mathbf M(xyx)$}}
\put(36.5,49.5){\makebox(0,0)[lc]{$\mathbf M(x^2y)$}}
\put(26,68){\makebox(0,0)[cc]{$\mathbf M(x^2y,yx^2)$}}
\put(37,56){\makebox(0,0)[lc]{$\mathbf M(yx^2)$}}
\put(27,15){\makebox(0,0)[lc]{$\mathbf{SL}$}}
\put(26,2){\makebox(0,0)[cc]{$\mathbf T$}}
\end{picture}
\end{center}
\caption{The lattice $\mathfrak L(\mathbf M(x^2y,yx^2))$}
\label{L(M(x^2y,yx^2))}
\end{figure}

\subsection{The variety $\mathbf M(xzxyty)\vee \mathbf N$}
\label{Subsec: M(xzxyty) vee N}

Let
$$
\mathbf N=\var\{x^2\approx x^3,\,x^2y\approx yx^2,\,xyxzx\approx x^2yz,\,\alpha,\,\beta\}.
$$
If $\mathbf w$ is a word and $X\subseteq\con(\mathbf w)$, then we denote by $\mathbf w(X)$ the word obtained from $\mathbf w$ by deleting all letters except letters from $X$. 
If $X=\{x_1,x_2,\dots,x_k\}$, then we write $\mathbf w(x_1,x_2,\dots,x_k)$ rather than $\mathbf w(\{x_1,x_2,\dots,x_k\})$. 

\begin{proposition}
\label{P: M(xzxyty) vee N}
The lattice $\mathfrak L(\mathbf M(xzxyty)\vee \mathbf N)$ is not modular.
\end{proposition}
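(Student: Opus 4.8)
The plan is to exhibit an explicit sublattice of $\mathfrak L(\mathbf M(xzxyty)\vee \mathbf N)$ isomorphic to the pentagon $N_5$, thereby showing the lattice is not modular. To do this I would first get control of the two "generating" varieties. The variety $\mathbf N$ is finitely based and its subvariety lattice should be well understood (or at least easily analyzed by the isoterm machinery of Lemma~\ref{L: M(W) in V}), since $\mathbf N$ is built from relatively short identities; in particular I want to know which of the words $xy$, $xyx$, $xzxyty$, $xtyzxy$, $x^2$, $x^2y$ are isoterms for $\mathbf N$ and for its key subvarieties. The variety $\mathbf M(xzxyty)$ is handled by Lemma~\ref{L: M(W) in V}: a word is an isoterm for it iff... well, one needs the identity basis of $\mathbf M(xzxyty)$, which follows from the general theory of such factor-monoid varieties (cf.\ Jackson~\cite{Jackson-05}); crucially $xzxyty$ is an isoterm for $\mathbf M(xzxyty)$ while it is \emph{not} an isoterm for $\mathbf N$ (since $\mathbf N$ satisfies $\beta$, which rewrites a subword of the relevant shape) — this asymmetry is what makes the join nontrivial.

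Next I would locate five subvarieties $\mathbf T \le \mathbf A \le \mathbf B \le \mathbf W$ and $\mathbf C$ with $\mathbf A \vee \mathbf C = \mathbf B \vee \mathbf C$, $\mathbf A \wedge \mathbf C = \mathbf B \wedge \mathbf C$, but $\mathbf A \lneq \mathbf B$, where $\mathbf W = \mathbf M(xzxyty)\vee\mathbf N$. The natural candidates: take $\mathbf C$ to be a small variety like $\mathbf M(xzxyty)$ itself (or a suitable subvariety of it), and take $\mathbf A, \mathbf B$ inside $\mathbf N$ or inside $\mathbf N \vee (\text{something small})$, chosen so that $\mathbf B$ is obtained from $\mathbf A$ by dropping exactly one identity that is "repaired" after joining with $\mathbf C$. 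Concretely I expect $\mathbf A = \mathbf N$ and $\mathbf B = \mathbf N\,\{\sigma\}^{-1}$ — i.e.\ $\mathbf B$ a variety slightly larger than $\mathbf N$ inside $\mathbf W$ that violates $\beta$ (or the identity $xyxzx\approx x^2yz$) but still satisfies enough to sit below $\mathbf W$; and then $\mathbf A\vee\mathbf C = \mathbf B\vee\mathbf C$ because the defining data of $\mathbf M(xzxyty)$ already "sees" the word $xzxyty$ as an isoterm, so adjoining it to either $\mathbf A$ or $\mathbf B$ produces the same variety. The meet computations would be done via Lemmas~\ref{L: x^n=x^m in X wedge Y}, \ref{L: decompositions of u and v}, \ref{L: xt_1x...t_kx is isoterm} and \ref{L: V does not contain M(xyzxty) or M(xtyzxy) or M(xzxyty)}, which let me reduce "does identity $\sigma$ hold in $\mathbf X\wedge\mathbf Y$?" to checking $\sigma$ in each factor and to isoterm bookkeeping.

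The two meet-equalities (showing $\mathbf A\wedge\mathbf C=\mathbf B\wedge\mathbf C$) and the join-equality (showing $\mathbf A\vee\mathbf C=\mathbf B\vee\mathbf C$) are the technical heart, and the join is the harder of the two: computing $\mathbf X\vee\mathbf Y$ for monoid varieties amounts to determining the common isoterms / the identities satisfied by \emph{both}, which via Proposition~\ref{P: deduction} and Lemma~\ref{L: M(W) in V} means a careful deduction argument showing every identity forced on $\mathbf B$ but not on $\mathbf A$ fails in $\mathbf C$ as well (so it is already lost in the join). I expect the main obstacle to be verifying that the single "extra" identity distinguishing $\mathbf A$ from $\mathbf B$ genuinely fails in the join $\mathbf W$ — equivalently that the relevant word remains an isoterm for $\mathbf M(xzxyty)$ — and that no \emph{other} identity sneaks in to collapse $\mathbf A\vee\mathbf C$ and $\mathbf B\vee\mathbf C$ to different varieties; this requires knowing the identity basis of $\mathbf W$ precisely enough, which is where I would lean hardest on the preliminary lemmas and on an explicit description of isoterms for $\mathbf M(xzxyty)$. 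Once the five varieties and the four (in)equalities are in hand, $N_5$ embeds and non-modularity of $\mathfrak L(\mathbf M(xzxyty)\vee\mathbf N)$ follows immediately.
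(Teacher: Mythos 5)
Your overall strategy (exhibit a pentagon, i.e.\ a violation of the modular law, inside $\mathfrak L(\mathbf M(xzxyty)\vee\mathbf N)$) is the right one, and your observation that $xzxyty$ is an isoterm for $\mathbf M(xzxyty)$ but not for $\mathbf N$ (because of $\beta$) is correct and relevant. But the proposal stops exactly where the proof has to start: you never produce the five varieties, and the configuration you sketch is not the one that works. You place the two comparable elements $\mathbf A\lneq\mathbf B$ near $\mathbf N$ and take $\mathbf C=\mathbf M(xzxyty)$; the actual construction is the other way around. The witness is the word $xysxzytz$ and the variety $\mathbf V=\mathbf M(xysxzytz)$: one takes the comparable pair $\mathbf M(xzxyty)\subset\mathbf V$ (the inclusion holds because the submonoid of $M(xysxzytz)$ generated by $\{x,z,ys,yt,1\}$ is isomorphic to $M(xzxyty)$) and the incomparable element $\mathbf N$. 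The two computations that make this work are absent from your plan: (i) $xysxzytz$ is an isoterm for $\mathbf M(xzxyty)\vee\mathbf N$ --- this is a short deduction using Lemma~\ref{L: M(W) in V} plus the fact that $\mathbf N$ violates $xyzxy\approx yxzxy$, and it yields $(\mathbf M(xzxyty)\vee\mathbf N)\wedge\mathbf V=\mathbf V$ without ever having to compute the join or an identity basis for it; and (ii) the identity $xysxzytz\approx yxsxzytz$ holds in $\mathbf N\wedge\mathbf V$ via the chain
$$
xysxzytz\stackrel{\beta}\approx xysxyztz\approx yxsxyztz\stackrel{\beta}\approx yxsxzytz,
$$
where the middle step uses $xyzxy\approx yxzxy$, which is satisfied by $M(xysxzytz)$. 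Since this identity also holds in $\mathbf M(xzxyty)$ but fails in $\mathbf V$ (as $xysxzytz$ is an isoterm for $\mathbf V$), one gets $\mathbf M(xzxyty)\vee(\mathbf N\wedge\mathbf V)\subsetneq(\mathbf M(xzxyty)\vee\mathbf N)\wedge\mathbf V$, which is the modular-law violation.

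Two further points. First, your worry that "computing $\mathbf X\vee\mathbf Y$ ... is the harder of the two" and requires "knowing the identity basis of $\mathbf W$ precisely enough" is a red flag: no identity basis for the join is needed anywhere, and a proof built on obtaining one would be far harder than necessary. The isoterm criterion of Lemma~\ref{L: M(W) in V} reduces the only join-related fact you need to a single finite word computation. Second, your candidate $\mathbf B=\mathbf N\,\{\sigma\}^{-1}$ ("a variety slightly larger than $\mathbf N$ inside $\mathbf W$ that violates $\beta$") is not specified well enough to check anything --- you do not say which $\sigma$, why such a $\mathbf B$ exists inside $\mathbf W$, or why the required join- and meet-equalities would hold --- so as it stands the argument cannot be completed along the lines you describe.
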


\begin{proof}
Suppose that $\mathbf M(xzxyty)\vee \mathbf N$ satisfies an identity $xysxzytz\approx \mathbf v$ for some $\mathbf v\in\mathfrak X^\ast$.
In view of Lemma~\ref{L: M(W) in V}, $\mathbf v(x,z,s,t)=xsxztz$ and $\mathbf v(y,z,s,t)=yszytz$.
Then $\mathbf v = \mathbf v^\prime sxzytz$, where $\mathbf v^\prime\in\{xy,yx\}$.
Since $\mathbf N$ violates 
\begin{equation}
\label{xyzxy=yxzxy}
xyzxy\approx yxzxy,
\end{equation} 
we have $\mathbf v^\prime\ne yx$.
Therefore, $\mathbf v = xysxzytz$.
We see that $xysxzytz$ is an isoterm for the variety $\mathbf M(xzxyty)\vee \mathbf N$.
Let $\mathbf V=\mathbf M(xysxzytz)$.
Then 
$$
(\mathbf M(xzxyty)\vee \mathbf N)\wedge \mathbf V = \mathbf V
$$
by Lemma~\ref{L: M(W) in V}.
It is routine to check that~\eqref{xyzxy=yxzxy} holds in $M(xysxzytz)$.
Then $\mathbf N\wedge\mathbf V$ satisfies
$$
xysxzytz\stackrel{\beta}\approx xysxyztz\stackrel{\eqref{xyzxy=yxzxy}}\approx yxsxyztz\stackrel{\beta}\approx yxsxzytz.
$$
Clearly, $xysxzytz\approx yxsxzytz$ is satisfied by $\mathbf M(xzxyty)$ as well.
Since the submonoid of $M(xysxzytz)$ generated by $\{x,z,ys,yt,1\}$ is isomorphic to $M(xzxyty)$ and so $\mathbf M(xzxyty)\subset\mathbf V$, we have
$$
\mathbf M(xzxyty)\vee(\mathbf N\wedge\mathbf V)\subset (\mathbf M(xzxyty)\vee \mathbf N)\wedge \mathbf V = \mathbf V.
$$
It follows that the lattice $\mathfrak L(\mathbf M(xzxyty)\vee \mathbf N)$ is not modular.
\end{proof}

\subsection{Varieties of the form $\mathbf M(\mathbf a_{n,m}[\rho])$}
\label{Subsec: M(a_{n,m}[rho])}

For any $n,m\in\mathbb N_0$ and $\rho\in S_{n+m}$, we put $\hat{\mathbf a}_{n,m}[\rho]=(\mathbf a_{n,m}[\rho])_x$.
Let 
$$
\hat{\mathbb N}_0^2 =\{(n,m)\in \mathbb N_0\times \mathbb N_0\mid |n-m|\le 1\}.
$$
For $n,m\in\mathbb N_0$, a permutation $\rho$ from $S_{n+m}$ is a $(n,m)$-\textit{permutation} if, for all $i=1,2,\dots,n+m-1$, one of the following holds:
\begin{itemize}
\item $1\le i\rho\le n$ and $n<(i+1)\rho\le n+m$;
\item $1\le (i+1)\rho\le n$ and $n<i\rho\le n+m$.
\end{itemize}
Evidently, if $\rho$ is a $(n,m)$-permutation, then $(n,m)\in \hat{\mathbb N}_0^2$.
The set of all $(n,m)$-permutations is denoted by $S_{n,m}$.
If $\mathbf w$ is a word and $X\subseteq\con(\mathbf w)$, then we denote by $\mathbf w_X$ the word obtained from $\mathbf w$ by deleting all letters from $X$. 
If $X=\{x\}$, then we write $\mathbf w_x$ rather than $\mathbf w_{\{x\}}$.

The proof of the following lemma is quite analogous to the proof of Lemma~4.10 in~\cite{Gusev-Vernikov-18} and so we omit it.

\begin{lemma}
\label{L: isoterms for M(xzxyty)}
For any $(n,m)\in \hat{\mathbb N}_0^2$ and $\rho\in S_{n,m}$, the word $\hat{\mathbf a}_{n,m}[\rho]$ is an isoterm for the variety $\mathbf M(xzxyty)$.\qed
\end{lemma}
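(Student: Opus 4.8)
The plan is to mimic the proof of \cite[Lemma~4.10]{Gusev-Vernikov-18} referenced by the authors. The goal is to show that if $\mathbf M(xzxyty)$ satisfies an identity $\hat{\mathbf a}_{n,m}[\rho]\approx\mathbf w$, then $\mathbf w=\hat{\mathbf a}_{n,m}[\rho]$. First I would record the structure of $\hat{\mathbf a}_{n,m}[\rho]=(\mathbf a_{n,m}[\rho])_x$, namely $\bigl(\prod_{i=1}^n z_it_i\bigr)\bigl(\prod_{i=1}^{n+m} z_{i\rho}\bigr)\bigl(\prod_{i=n+1}^{n+m} t_iz_i\bigr)$. Here each $t_i$ is simple and each $z_j$ occurs exactly twice, so by Lemma~\ref{L: xt_1x...t_kx is isoterm} (applied with the word $\bigl(\prod_{i=1}^1 xt_i\bigr)x=xtx$, which is an isoterm for $\mathbf M(xzxyty)$ since $M(xtx)=M(xyx)$ embeds into $M(xzxyty)$ via $x,y\mapsto x$, $z\mapsto 1$, and $t\mapsto y$ — actually more directly $M(xyx)$ is a submonoid) every identity $\hat{\mathbf a}_{n,m}[\rho]\approx\mathbf w$ holding in $\mathbf M(xzxyty)$ is linear-balanced. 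This already forces $\con(\mathbf w)=\con(\hat{\mathbf a}_{n,m}[\rho])$, the simple letters $t_1,\dots,t_{n+m}$ occupy the same relative positions and blocks in $\mathbf w$, and each $z_j$ has one occurrence in the same block on each side; so the only freedom left is the internal order of letters within each block of $\mathbf w$.

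The next step is to pin down that internal order using the two-letter and three-letter isoterm conditions coming from $\mathbf M(xzxyty)$. Applying Lemma~\ref{L: M(W) in V}, the word $xzxyty$ is an isoterm for $\mathbf M(xzxyty)$; more usefully, I would extract from it (via deleting letters and identifying letters, i.e. passing to submonoids and homomorphic images) a list of short isoterms that control adjacency. The key point to exploit is the $(n,m)$-permutation hypothesis: $\rho\in S_{n,m}$ means that consecutive letters $z_{i\rho},z_{(i+1)\rho}$ in the middle factor $\prod_{i=1}^{n+m}z_{i\rho}$ always straddle the boundary between index-set $\{1,\dots,n\}$ and $\{n+1,\dots,n+m\}$. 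For such a pair, say with $a\in\{1,\dots,n\}$ and $b\in\{n+1,\dots,n+m\}$, the subword on $\{z_a,t_a,z_b,t_b\}$ of $\hat{\mathbf a}_{n,m}[\rho]$ is $z_at_az_az_bt_bz_b$ or $z_at_az_bz_at_bz_b$ depending on which of $a,b$ comes first in the middle factor — and each of these is (isomorphic to) an isoterm for $\mathbf M(xzxyty)$. Checking this isoterm claim is where the real work sits: one writes $\mathbf M(xzxyty)=\mathbf M(W)$ and verifies by Lemma~\ref{L: M(W) in V} that the relevant six-letter words are isoterms, equivalently that $M(z_at_az_az_bt_bz_b)$ and $M(z_at_az_bz_at_bz_b)$ embed into (a power of) $M(xzxyty)$; this is the step analogous to the suppressed computation in \cite{Gusev-Vernikov-18}.

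Granting those isoterm facts, the argument closes quickly: linear-balancedness puts $z_a$ (the second occurrence) and $z_b$ (the first occurrence, if $b$ precedes $a$ in the middle factor, or vice versa) into a common block of $\mathbf w$, and the six-letter isoterm forces their relative order in that block of $\mathbf w$ to agree with $\hat{\mathbf a}_{n,m}[\rho]$. Since $\rho$ is an $(n,m)$-permutation, every adjacent pair in every block of $\hat{\mathbf a}_{n,m}[\rho]$ is covered by such a straddling pair, so the order inside every block of $\mathbf w$ is completely determined and equals that of $\hat{\mathbf a}_{n,m}[\rho]$; hence $\mathbf w=\hat{\mathbf a}_{n,m}[\rho]$ and the word is an isoterm for $\mathbf M(xzxyty)$. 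The main obstacle, and the reason the authors omit the proof, is the bookkeeping verification that the short straddling-pair words are genuinely isoterms for $\mathbf M(xzxyty)$ — one has to rule out every possible rewriting, which is a finite but fiddle-prone case analysis entirely parallel to \cite[Lemma~4.10]{Gusev-Vernikov-18}.
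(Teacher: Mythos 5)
Your proof is correct and follows the route the paper intends (the proof is omitted in the paper with a pointer to the analogous Lemma~4.10 of Gusev--Vernikov): linear-balancedness via Lemma~\ref{L: xt_1x...t_kx is isoterm} with $k=1$ pins down the block structure, and the $(n,m)$-permutation hypothesis reduces the order inside the middle block to four-letter restrictions that are copies of $xzxyty$ or $xzyxty$, after which transitivity over adjacent pairs recovers the whole block. The verification you defer as the ``fiddle-prone'' step is in fact quick: $xzxyty$ is an isoterm for $\mathbf M(xzxyty)$ trivially by Lemma~\ref{L: M(W) in V}, and for $xzyxty$ the same linear-balancedness argument leaves $xzyxty\approx xzxyty$ as the only candidate non-trivial identity, which $M(xzxyty)$ visibly violates since the identity substitution sends one side to a non-zero factor and the other to $0$.
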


For any non-empty word $\mathbf w$ of length $\ell$, $0\le k \le \ell$ and $0\le m\le\ell-k$, let $\mathbf w[k;m]$ denote a factor of $\mathbf w$ of length $m$ directly succeeding the prefix of $\mathbf w$ of length $k$.
The expression $_{i\mathbf w}x$ means the $i$th occurrence of a letter $x$ in a word $\mathbf w$. 
If the $i$th occurrence of $x$ precedes the $j$th occurrence of $y$ in a word $\mathbf w$, then we write $({_{i\mathbf w}x}) < ({_{j\mathbf w}y})$.
For any monoid variety $\mathbf V$, let $\FIC(\mathbf V)$ denote the fully invariant congruence on $\mathfrak X^\ast$ corresponding to $\mathbf V$.

\begin{proposition}
\label{P: L(M(a_{n,m}[rho])) is not distributive}
The lattice $\mathfrak L(\mathbf M(\mathbf a_{n,m}[\rho]))$ is not distributive for any $(n,m)\in \hat{\mathbb N}_0^2$ and $\rho\in S_{n,m}$.
\end{proposition}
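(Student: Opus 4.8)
The plan is to exhibit, inside $\mathfrak L(\mathbf M(\mathbf a_{n,m}[\rho]))$, a sublattice isomorphic to the $5$-element non-distributive lattice $N_5$ (or, depending on what is easiest to control, to $M_3$) by using the factor-monoid machinery of Lemma~\ref{L: M(W) in V}. The key observation is that $\mathbf a_{n,m}[\rho]$ and its "$x$-companion" $\mathbf a_{n,m}^\prime[\rho]$ differ only in the pattern of the two occurrences of $x$ relative to the $z_{i\rho}$-block, while all other letters occur in exactly the same positions; this is precisely the kind of configuration that produces a non-distributive interval. Concretely, I would set $\mathbf V=\mathbf M(\mathbf a_{n,m}[\rho])$ and first argue, via Proposition~\ref{P: deduction} together with Lemma~\ref{L: M(W) in V}, that $\mathbf a_{n,m}[\rho]$ being an isoterm for $\mathbf V$ forces $\mathbf V$ to satisfy no non-trivial identity $\mathbf a_{n,m}[\rho]\approx\mathbf w$; one then identifies which of a carefully chosen finite list of words (obtained from $\mathbf a_{n,m}[\rho]$ by small rearrangements of the $x$'s and of the $z_i,t_i$ pairs) are isoterms for $\mathbf V$ and which are not.

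The main steps, in order, would be: (1) Record the structural description of $\mathbf a_{n,m}[\rho]$: its simple letters are $t_1,\dots,t_{n+m}$, its multiple letters are $x$ (occurring twice) and $z_1,\dots,z_{n+m}$ (occurring twice), and the $z_i$'s are placed so that $\hat{\mathbf a}_{n,m}[\rho]$ has the block structure used in Lemma~\ref{L: isoterms for M(xzxyty)}. (2) Choose three (or two, for $N_5$) auxiliary words $\mathbf b_1,\mathbf b_2,\mathbf b_3$ obtained from $\mathbf a_{n,m}[\rho]$ by permuting adjacent occurrences of letters in a single block, and show using Lemma~\ref{L: M(W) in V} and Lemma~\ref{L: decompositions of u and v} that each $M(\mathbf b_j)$ is a proper subvariety of $\mathbf V$, that the $M(\mathbf b_j)$ are pairwise incomparable, and that they have the same nontrivial meet and the same join inside $\mathbf V$. (3) Verify the lattice identities defining $M_3$ (or $N_5$) for these elements by direct isoterm bookkeeping: an identity $\mathbf u\approx\mathbf v$ holds in $M(\mathbf b_i)\vee M(\mathbf b_j)$ iff it holds in both, and in $M(\mathbf b_i)\wedge M(\mathbf b_j)$ iff $\mathbf u\approx\mathbf v$ is deducible from the identities of both; the non-distributivity then follows since, e.g., $\mathbf a_{n,m}[\rho]\approx\mathbf a_{n,m}^\prime[\rho]$ separates $(\mathbf X_1\vee\mathbf X_2)\wedge\mathbf X_3$ from $(\mathbf X_1\wedge\mathbf X_3)\vee(\mathbf X_2\wedge\mathbf X_3)$.

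The hard part will be step (2)–(3): choosing the auxiliary words so that the required meets and joins are simultaneously controllable. The delicacy is that deleting or reordering a letter in $\mathbf a_{n,m}[\rho]$ must not accidentally collapse two of the purported lattice elements or make one contain another; this is exactly where the hypothesis $(n,m)\in\hat{\mathbb N}_0^2$ and $\rho\in S_{n,m}$ enters, since it guarantees (via Lemma~\ref{L: isoterms for M(xzxyty)}) that the relevant "small" words remain isoterms for $\mathbf M(xzxyty)\subseteq\mathbf V$, pinning down the lattice structure. I would expect the cleanest route to be a reduction, using the substitution- and block-manipulation lemmas, to the already-treated case $\mathbf M(x^2y,yx^2)$ of Proposition~\ref{P: M(x^2y,yx^2)} — namely, showing that a suitable interval of $\mathfrak L(\mathbf M(\mathbf a_{n,m}[\rho]))$ retracts onto (a non-distributive quotient of) that lattice — so that the bulk of the verification is transported rather than redone from scratch. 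The remaining computations, checking that the listed words are or are not isoterms, are routine applications of Lemma~\ref{L: M(W) in V} and Lemma~\ref{L: decompositions of u and v} and I would not carry them out in detail here.
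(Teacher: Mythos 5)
Your overall instinct---produce a failure of the distributive law inside $\mathfrak L(\mathbf M(\mathbf a_{n,m}[\rho]))$ by isoterm bookkeeping---is in the right spirit, but both concrete routes you propose break down. First, the suggested reduction to Proposition~\ref{P: M(x^2y,yx^2)} is a dead end: the word $\mathbf a_{n,m}[\rho]$ contains no square factor, so for any substitution $\phi$ with $\phi(x)\ne 1$ both sides of $x^2y\approx yx^2$ evaluate to $0$ in $M(\mathbf a_{n,m}[\rho])$; hence this monoid satisfies $x^2y\approx yx^2$, the word $x^2y$ is not an isoterm for $\mathbf M(\mathbf a_{n,m}[\rho])$, and by Lemma~\ref{L: M(W) in V} the variety $\mathbf M(x^2y,yx^2)$ is \emph{not} contained in $\mathbf M(\mathbf a_{n,m}[\rho])$ (and in any case that lattice is modular, so one would still need a genuinely different witness for non-distributivity versus non-modularity). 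Second, taking all three lattice elements to be of the form $\mathbf M(\mathbf b_j)$ for small perturbations $\mathbf b_j$ of $\mathbf a_{n,m}[\rho]$ gives you no handle on the meets: to compare $(\mathbf X\vee\mathbf Y)\wedge\mathbf Z$ with $(\mathbf X\wedge\mathbf Z)\vee(\mathbf Y\wedge\mathbf Z)$ you must know exactly which identities are deducible from the union of two equational theories, and for two factor-monoid varieties this is not controllable by the lemmas you cite. You never specify the words $\mathbf b_j$, and arranging three pairwise incomparable $\mathbf M(\mathbf b_j)$ with a common meet and common join (an $M_3$) is precisely the part that has no known mechanism here.

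The actual proof avoids both problems by a different design. It takes $\mathbf Z=\mathbf M(\mathbf v_{0,6n})$ for a much larger ``tripled'' word $\mathbf v_{0,6n}$ (built from three interleaved copies of the letter pattern of $\mathbf a_{n,n}[\rho]$, after a preliminary reduction showing one may assume $\rho=\xi^\prime$ so that the inequalities~\eqref{1rho,(2n-1)rho,2rho,(2n)rho} hold), and takes $\mathbf X$ and $\mathbf Y$ to be meets of $\mathbf M(\mathbf a_{n,n}[\rho])$ with varieties defined by a \emph{single} explicit identity each ($\mathbf v_{0,6n}\approx\mathbf v_{2,6n}$ and $\mathbf v_{0,6n}\approx\mathbf v_{0,6n-2}$, which shift the two occurrences of $x$ in opposite directions). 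The technical core is then a direct-deducibility analysis (via Proposition~\ref{P: deduction}) showing that $\{\mathbf v_{0,6n},\mathbf v_{1,6n},\mathbf v_{2,6n}\}$ is a $\FIC(\mathbf X)$-class and $\{\mathbf v_{0,6n},\mathbf v_{0,6n-1},\mathbf v_{0,6n-2}\}$ is a $\FIC(\mathbf Y)$-class, so $\mathbf v_{0,6n}$ remains an isoterm for $\mathbf X\vee\mathbf Y$ and $(\mathbf X\vee\mathbf Y)\wedge\mathbf Z=\mathbf Z$, whereas composing the two shifts collapses $\mathbf v_{0,6n}$ to $\mathbf v_{0,0}$ in both $\mathbf X\wedge\mathbf Z$ and $\mathbf Y\wedge\mathbf Z$, giving $(\mathbf X\wedge\mathbf Z)\vee(\mathbf Y\wedge\mathbf Z)\subset\mathbf Z$. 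Neither the enlargement of the word nor the single-identity meets appear in your sketch, and without them the ``routine'' verifications you defer cannot be carried out.
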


\begin{proof}
There are four possibilities:
\begin{itemize}
\item $n=m$ and $1\le 1\rho\le n$;
\item $n=m$ and $n+1\le 1\rho\le 2n$;
\item $n=m+1$ and so $1\le 1\rho\le n$;
\item $n=m-1$ and so $n+1\le 1\rho\le n+m$.
\end{itemize}
We will consider only the first possibility because the other ones are considered quite analogous.
In this case, $1\le i\theta\le n$ and $n+1\le (i+1)\theta\le 2n$ for any $i=1,3,\dots,2n-1$.

For $k\in \mathbb N$ and an arbitrary permutation $\theta\in S_{k,k}$ with $1\le i\theta\le k$ and $k+1\le (i+1)\theta\le 2k$ for any $i=1,3,\dots,2k-1$, we define the permutation $\theta^\prime\in S_{k+2,k+2}$ as follows:
\begin{itemize}
\item $1\theta^\prime = 1\theta+1$ and $2\theta^\prime = 2\theta+3$;
\item $3\theta^\prime = k+2$ and $4\theta^\prime = 2k+4$;
\item $i\theta^\prime = (i-2)\theta+1$ for any $i=5,7,\dots, 2k-1$;
\item $i\theta^\prime = (i-2)\theta+3$ for any $i=6,8,\dots, 2k$;
\item $(2k+1)\theta^\prime = 1$ and $(2k+2)\theta^\prime = k+3$;
\item $(2k+3)\theta^\prime = (2k-1)\theta+1$ and $(2k+4)\theta^\prime = (2k)\theta+3$.
\end{itemize}
Let $\mathbf a_{k+2,k+2}[\theta^\prime]\approx \mathbf a$ be an identity of $\mathbf M(\mathbf a_{k,k}[\theta])$.
It is easy to see that a non-trivial identity of the form $xzxyty \approx \mathbf w$ implies a non-trivial identity of the form $\mathbf a_{k,k}[\theta] \approx \mathbf w^\prime$.
This observation and Lemma~\ref{L: M(W) in V} imply that $xzxyty$ is an isoterm for $\mathbf M(\mathbf a_{k,k}[\theta])$.
Then $\mathbf a_x=\hat{\mathbf a}_{k+2,k+2}[\theta^\prime]$ by Lemma~\ref{L: isoterms for M(xzxyty)}. 
Since $\mathbf a_{k+2,k+2}[\theta^\prime](X)$ coincides (up to renaming of letters) with $\mathbf a_{k,k}[\theta]$ for 
$$
X=\{x\}\cup\{z_i,t_i\mid 2\le i\le n+1\}\cup\{z_i,t_i\mid k+4\le i\le 2k+3\},
$$ 
Lemma~\ref{L: M(W) in V} implies that $\mathbf a(X)=\mathbf a_{k,k}[\theta](X)$.
In particular, $\occ_x(\mathbf a)=2$.
We notice also that the first and the second occurrences of $x$ in $\mathbf a$ lie in the same block because $xzxyty$ and so $xzx$ are isoterms for $\mathbf M(\mathbf a_{k,k}[\theta])$.
Therefore, $({_{1\mathbf a}t_{n+2}}) < ({_{1\mathbf a}x})$ and $({_{2\mathbf a}x}) < ({_{1\mathbf a}t_{n+3}})$.
It follows that $\mathbf a = \mathbf a_{k+2,k+2}[\theta^\prime]$.
We see that the word $\mathbf a_{k+2,k+2}[\theta^\prime]$ is an isoterm for $\mathbf M(\mathbf a_{k,k}[\theta])$.
Then $\mathbf M(\mathbf a_{k+2,k+2}[\theta^\prime])\subseteq\mathbf M(\mathbf a_{k,k}[\theta])$ by Lemma~\ref{L: M(W) in V}.

Since it suffices to verify that some subvariety of $\mathbf M(\mathbf a_{n,n}[\rho])$ has a non-distributive subvariety lattice, we may further assume without any loss that 
$\rho=\xi^\prime$ for some $\xi\in S_{n-2,n-2}$.
In particular,
\begin{equation}
\label{1rho,(2n-1)rho,2rho,(2n)rho}
2\le 1\rho,(2n-1)\rho\le n-1\ \text{ and }\ n+2\le 2\rho,(2n)\rho\le 2n-1.
\end{equation}

For any $0 \le s \le t \le 6n$, put $\mathbf v_{s,t} = \mathbf p\,\mathbf q[0;s]\,x\,\mathbf q[s;t-s]\,x\,\mathbf q[t;6n-t]\,\mathbf r$, where
$$
\begin{aligned}
&\mathbf p = \biggl(\prod_{i=1}^{1\rho} z_i^{\prime\prime} t_i^{\prime\prime}\biggr)\biggl(\prod_{i=1}^{(2n-1)\rho} z_i^{\prime} t_i^{\prime}\biggr)\biggl(\prod_{i=1}^n z_i t_i\biggr)\biggl(\prod_{i=(2n-1)\rho+1}^n z_i^{\prime} t_i^{\prime}\biggr)\biggl(\prod_{i=1\rho+1}^n z_i^{\prime\prime} t_i^{\prime\prime}\biggr),\\
&\mathbf q = z_{1\rho}z_{2\rho}\cdot\biggl(\prod_{i=1}^{2n} z_{i\rho}^{\prime}\biggr)\biggl(\prod_{i=3}^{2n-2} z_{i\rho}\biggr)\biggl(\prod_{i=1}^{2n} z_{i\rho}^{\prime\prime} \biggr)\cdot z_{(2n-1)\rho}z_{(2n)\rho},\\
&\mathbf r = \biggl(\prod_{i=n+1}^{2\rho} t_i^{\prime\prime}z_i^{\prime\prime}\biggr)\biggl(\prod_{i=n+1}^{(2n)\rho} t_i^{\prime}z_i^{\prime}\biggr)\biggl(\prod_{i=n+1}^{2n} t_iz_i\biggr)\biggl(\prod_{i=(2n)\rho+1}^{2n} z_i^{\prime}t_i^{\prime}\biggr)\biggl(\prod_{i=2\rho+1}^{2n} z_i^{\prime\prime}t_i^{\prime\prime} \biggr).
\end{aligned}
$$
Evidently, $\mathbf v_{0,6n}$ coincides (up to renaming of letters) with $\mathbf a_{3n,3n}[\tau]$ for some $\tau\in S_{3n,3n}$.
Then arguments similar to ones from the second paragraph of this proof imply that the word $\mathbf v_{0,6n}$ is an isoterm for $\mathbf M(\mathbf a_{n,n}[\rho])$.

Let $\mathbf v_{1,6n}\approx \mathbf v$ be an identity of $\mathbf M(\mathbf a_{n,n}[\rho])$.
Arguments similar to ones from the second paragraph of this proof imply that $\mathbf v_x=\mathbf p\mathbf q\mathbf r$.
Note that if
$$
X=\{x,z_{(2n-1)\rho},t_{(2n-1)\rho},z_{(2n)\rho},t_{(2n)\rho}\}\cup\{z_{i\rho}^\prime,t_{i\rho}^\prime\mid 1\le i\le 2n-2\},
$$
then $\mathbf v_{1,6n}(X)$ coincides (up to renaming of letters) with $\mathbf a_{n,n}[\rho]$.
Then $\mathbf v(X)=\mathbf v_{1,6n}(X)$ by Lemma~\ref{L: M(W) in V}. 
In particular, $\occ_x(\mathbf v)=2$.
Arguments similar to ones from the second paragraph of this proof imply that $({_{1\mathbf v}t_n^{\prime\prime}}) < ({_{1\mathbf v}x})$ and $({_{2\mathbf v}x}) < ({_{1\mathbf v}t_{n+1}^{\prime\prime}})$.
It follows that $\mathbf v$ is either $\mathbf v_{0,6n}$ or $\mathbf v_{1,6n}$ or $\mathbf v_{2,6n}$.
Since $\mathbf v_{0,6n}$ is an isoterm for $\mathbf M(\mathbf a_{n,n}[\rho])$, the word $\mathbf v$ cannot coincide with $\mathbf v_{0,6n}$.
Therefore, $\mathbf v\in\{\mathbf v_{1,6n},\mathbf v_{2,6n}\}$. 
Finally, it is routine to check that the identity $\mathbf v_{1,6n}\approx\mathbf v_{2,6n}$ holds in $\mathbf M(\mathbf a_{n,n}[\rho])$.
We see that the set $\{\mathbf v_{1,6n},\mathbf v_{2,6n}\}$ forms a $\FIC(\mathbf M(\mathbf a_{n,n}[\rho]))$-class.
By similar arguments we can show that the set $\{\mathbf v_{0,6n-1},\mathbf v_{0,6n-2}\}$ is a $\FIC(\mathbf M(\mathbf a_{n,n}[\rho]))$-class.

Let 
$$
\begin{aligned}
\mathbf X = \mathbf M(\mathbf a_{n,n}[\rho])\wedge\var\{\mathbf v_{0,6n} \approx \mathbf v_{2,6n}\}\  \text { and } \ 
\mathbf Y = \mathbf M(\mathbf a_{n,n}[\rho])\wedge\var\{\mathbf v_{0,6n} \approx \mathbf v_{0,6n-2}\}
\end{aligned}
$$
and $A=\{\mathbf v_{0,6n},\mathbf v_{1,6n},\mathbf v_{2,6n}\}$.
Consider an identity $\mathbf u \approx \mathbf u^\prime$ of $\mathbf X$ with $\mathbf u\in A$.
We are going to show that $\mathbf u^\prime \in A$.
In view of Proposition~\ref{P: deduction}, we may assume without loss of generality that  either $\mathbf u \approx \mathbf u^\prime$ holds in $\mathbf M(\mathbf a_{n,n}[\rho])$ or $\mathbf u \approx \mathbf u^\prime$ is directly deducible from $\mathbf v_{0,6n} \approx \mathbf v_{2,6n}$. 
In view of the above, $A$ is a union of two $\FIC(\mathbf M(\mathbf a_{n,n}[\rho]))$-classes.
Therefore, it remains to consider the case when $\mathbf u \approx \mathbf u^\prime$ is directly deducible from $\mathbf v_{0,6n} \approx \mathbf v_{2,6n}$, i.e., there exist some words $\mathbf a,\mathbf b \in \mathfrak X^\ast$ and substitution $\phi\colon \mathfrak X \to \mathfrak X^\ast$ such that $\{\mathbf u,\mathbf u^\prime \} = \{\mathbf a\phi(\mathbf v_{0,6n})\mathbf b,\mathbf a\phi(\mathbf v_{2,6n})\mathbf b\}$.

If $\phi(x)=1$, then $\phi(\mathbf v_{0,6n})=\phi(\mathbf v_{2,6n})$ and so $\mathbf u=\mathbf u^\prime$, whence $\mathbf u^\prime \in A$.
It remains to consider the case when $\phi(x)\ne1$.
Clearly, $t_i, t_i^\prime,t_i^{\prime\prime}\notin\con(\phi(x))$ for any $i=1,2,\dots,2n$ because $t_i, t_i^\prime,t_i^{\prime\prime}\in\simple(\mathbf u)$, while $x\in\mul(\mathbf v_{0,6n})=\mul(\mathbf v_{2,6n})$.
We note also that the letters of the form $z_i$, $z_i^\prime$ or $z_i^{\prime\prime}$ do not occur in $\phi(x)$ as well because the first and the second occurrences of these letters in $\mathbf u$ lie in different blocks, while the first and the second occurrences of $x$ in both $\mathbf v_{0,6n}$ and $\mathbf v_{2,6n}$ lie in the same block.
Therefore, $\con(\phi(x))=\{x\}$.
Since $\occ_x(\mathbf u)=\occ_x(\mathbf v_{0,6n})=\occ_x(\mathbf v_{2,6n})=2$, we have $\phi(x)=x$.
It follows that $x\notin \con(\phi(z_iz_i^\prime z_i^{\prime\prime}t_it_i^\prime t_i^{\prime\prime}))$ for any $i=1,2,\dots,2n$.
Clearly, $t_i, t_i^\prime,t_i^{\prime\prime}\notin\con(\phi(z_jz_j^\prime z_j^{\prime\prime}))$ for any $i,j=1,2,\dots,n$ because $t_i, t_i^\prime,t_i^{\prime\prime}\in\simple(\mathbf u)$, while $z_j,z_j^\prime, z_j^{\prime\prime}\in\mul(\mathbf v_{0,6n})=\mul(\mathbf v_{2,6n})$.
We note also that $\phi(z_i)$, $\phi(z_i^\prime)$ and $\phi(z_i^{\prime\prime})$ are either empty words or letters for any $i=1,2,\dots,n$ because any factor of $\mathbf u$ of length greater than~1 occurs only once in $\mathbf u$.
This implies that 
\begin{equation}
\label{phi(z_i) subseteq}
\{\phi(z_i),\phi(z_i^\prime),\phi(z_i^{\prime\prime})\mid 1\le i\le 2n\}\subseteq \{1\}\cup\{z_i,z_i^\prime,z_i^{\prime\prime}\mid 1\le i\le 2n\}.
\end{equation}
In view of the above, if $\mathbf u = \mathbf a\phi(\mathbf v_{2,6n})\mathbf b$, then $\mathbf u^\prime= \mathbf a\phi(\mathbf v_{0,6n})\mathbf b\in A$.
So, it remains to consider the case when $\mathbf u = \mathbf a\phi(\mathbf v_{0,6n})\mathbf b$ and $\mathbf u^\prime= \mathbf a\phi(\mathbf v_{2,6n})\mathbf b$.
Since $\mathbf u=\mathbf v_{\ell,2n}$ for some $\ell\in\{0,1,2\}$, we have $\phi(\mathbf q)=\mathbf q[\ell;6n-\ell]$ .

If $\ell=0$, then, since $\phi(z_{1\rho})$ and $\phi(z_{2\rho})$ are either empty words or letters by~\eqref{phi(z_i) subseteq}, we have $\mathbf u^\prime\in A$.

Suppose that $\ell=1$.
Then $\phi(z_{1\rho})= 1$ because $\phi(z_{1\rho})$ cannot coincide with $z_{2\rho}$.
If $\phi(z_{2\rho})=1$, then $\phi(\mathbf v_{0,6n})=\phi(\mathbf v_{2,6n})$ and so $\mathbf u=\mathbf u^\prime$, whence $\mathbf u^\prime \in A$.
If $\phi(z_{2\rho})\ne1$, then $\phi(z_{2\rho})=z_{2\rho}$ by~\eqref{phi(z_i) subseteq}.
It follows that $\mathbf u^\prime= \mathbf v_{2,6n}$ and so $\mathbf u^\prime\in A$.

Suppose now that $\ell=2$.
Then
\begin{equation}
\label{z_i,z_i',z_i''}
\{z_{i_1},z_{i_2}^\prime,z_{i_3}^{\prime\prime}\mid 1\le i_1,i_2,t_3\le n\}\setminus\{z_{1\rho}\}
\end{equation}
is a subset of 
$$
\{\phi(z_{i_1}),\phi(z_{i_2}^\prime),\phi(z_{i_3}^{\prime\prime})\mid 1\le i_1,i_2,i_3\le n\}
$$
by~\eqref{phi(z_i) subseteq} and, moreover, the sets~\eqref{z_i,z_i',z_i''} and $\{\phi(t_i),\phi(t_i^\prime),\phi(t_i^{\prime\prime})\mid 1\le i\le n\}$ are disjoint.
If $\phi(z_{1\rho})= 1$, then $\phi(z_{2\rho})= 1$ because $\phi(z_{2\rho})$ cannot coincide with $z_{1\rho}^\prime$.
In this case, $\phi(\mathbf v_{0,6n})=\phi(\mathbf v_{2,6n})$ and so $\mathbf u=\mathbf u^\prime$, whence $\mathbf u^\prime \in A$.
So, we may assume that $\phi(z_{1\rho})\ne1$.
Then $\phi(z_{1\rho})=z_{1\rho}^\prime$ by~\eqref{phi(z_i) subseteq}.
Two cases are possible:
\begin{itemize}
\item[\textup{(a)}] $1\rho < (2n-1)\rho$;
\item[\textup{(b)}] $(2n-1)\rho< 1\rho$.
\end{itemize}

Suppose that~(a) is true.
Then the set
$$
\{z_{i_1},z_{i_2}^\prime,z_{i_3}^{\prime\prime}\mid 1\le i_1\le n,\ 1\rho+1\le i_2,i_3 \le n\}\setminus\{z_{1\rho}\}
$$
must be a subset of 
$$
\{\phi(z_{i_1}),\phi(z_{i_2}^\prime),\phi(z_{i_3}^{\prime\prime})\mid 1\rho+1\le i_1,i_3\le n,\ (2n-1)\rho+1\le i_2\le n\}.
$$
But this is impossible because the first of these sets contains $(n-1)+2\cdot(n-1\rho)$ elements, the second one has at most $(n-(2n-1)\rho)+2\cdot(n-1\rho)$ elements and $1< (2n-1)\rho$ by~\eqref{1rho,(2n-1)rho,2rho,(2n)rho}.

Suppose now that~(b) is true.
Then the set
$$
\{z_{i_1},z_{i_2}^\prime,z_{i_3}^{\prime\prime}\mid 1\le i_1\le n,\ 1\le i_2,i_3 \le 1\rho\}\setminus\{z_{1\rho}\}
$$
must be a subset of 
$$
\{\phi(z_{i_1}),\phi(z_{i_2}^\prime),\phi(z_{i_3}^{\prime\prime})\mid 1\le i_1,i_3\le1\rho,\ 1\le i_2\le (2n-1)\rho\}.
$$
But this is also impossible because the first of these sets contains $(n-1)+2\cdot(1\rho)$ elements, the second one has at most $(2n-1)\rho+2\cdot(1\rho)$ elements and $(2n-1)\rho<1\rho<n$ by~\eqref{1rho,(2n-1)rho,2rho,(2n)rho}.

We see that $\mathbf u^\prime \in A$ in either case.
This means that $A$ forms a $\FIC(\mathbf X)$-class.
By similar arguments we can show that $\{\mathbf v_{0,6n},\mathbf v_{0,6n-1},\mathbf v_{0,6n-2}\}$ is a $\FIC(\mathbf Y)$-class.
It follows that the word $\mathbf v_{0,6n}$ is an isoterm for $\mathbf X\vee \mathbf Y$.
Put $\mathbf Z = \mathbf M(\mathbf v_{0,6n})=\mathbf M(\mathbf a_{3n,3n}[\tau])$.
According to Lemma~\ref{L: M(W) in V}, $\mathbf Z\subseteq \mathbf X\vee \mathbf Y$ and, therefore, $(\mathbf X\vee \mathbf Y)\wedge \mathbf Z=\mathbf Z$.
It is routine to check that $\mathbf Z$ satisfies $\mathbf a_{k,k}[\pi]\approx \mathbf a_{k,k}^\prime[\pi]$ for any $k=1,2,\dots,3n-1$ and any $\pi\in S_{k,k}$.
Then the identity $\mathbf v_{0,6n}\approx \mathbf v_{0,0}$ is satisfied by $\mathbf X\wedge \mathbf Z$ because $\mathbf v_{0,6n}\approx \mathbf v_{2,6n}$ 
holds in $\mathbf X$, $\mathbf v_{2,6n}\approx \mathbf v_{2,2}$ follows from some identity of the form $\mathbf a_{3n-1,3n-1}[\pi]\approx \mathbf a_{3n-1,3n-1}^\prime[\pi]$ and $\mathbf v_{2,2} \approx \mathbf v_{0,0}$ is a consequence of $x^2y\approx yx^2$.
By a similar argument we can show that the identity $\mathbf v_{0,6n}\approx \mathbf v_{0,0}$ is satisfied by $\mathbf Y\wedge \mathbf Z$ as well.
Therefore, the variety $(\mathbf X\wedge \mathbf Z)\vee(\mathbf Y\wedge \mathbf Z)$ satisfies $\mathbf v_{0,6n}\approx \mathbf v_{0,0}$.
We see that $\mathbf v_{0,6n}$ is not an isoterm for this variety, whence
$$
(\mathbf X\wedge \mathbf Z)\vee(\mathbf Y\wedge \mathbf Z)\subset \mathbf Z= (\mathbf X\vee \mathbf Y)\wedge \mathbf Z.
$$
Thus, we have proved that the lattice $\mathfrak L(\mathbf M(\mathbf a_{n,m}[\rho]))$ is not distributive.
\end{proof}

\subsection{Varieties of the form $\mathbf M(\mathbf c_{n,m,k}[\rho])$}
\label{Subsec: M(c_{n,m,k}[rho])}

\begin{proposition}
\label{P: L(M(c_{n,m,n+m+1}[rho])) is not modular}
The lattice $\mathfrak L(\mathbf M(\mathbf c_{n,m,n+m+1}[\rho]))$ is not modular and so not distributive for any $n,m\in \mathbb N_0$ and $\rho\in S_{n+m,n+m+1}$.
\end{proposition}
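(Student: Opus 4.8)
Abbreviate $\mathbf w:=\mathbf c_{n,m,n+m+1}[\rho]$ and $\mathbf w':=\mathbf c_{n,m,n+m+1}^\prime[\rho]$; these two words are distinct, differing only in the relative order of the first occurrences of $x$ and $y$. Since every distributive lattice is modular, it suffices to exhibit a copy of the pentagon inside $\mathfrak L(\mathbf M(\mathbf w))$, and the plan is to build it from factor monoids in the spirit of Propositions~\ref{P: M(xzxyty) vee N} and~\ref{P: L(M(a_{n,m}[rho])) is not distributive}. The top of the pentagon is obtained by the ``unfolding'' device of the proof of Proposition~\ref{P: L(M(a_{n,m}[rho])) is not distributive} (splicing fresh $z$-$t$ blocks around the scattered occurrence of $x$): this yields a longer word $\mathbf w^\ast$ which is again of the form $\mathbf c_{n_1,m_1,n_1+m_1+1}[\rho_1]$ with $\rho_1\in S_{n_1+m_1,n_1+m_1+1}$, contains $\mathbf w$ as a letter-restriction, and is an isoterm for $\mathbf M(\mathbf w)$. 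The last point runs through the usual three steps: given an identity $\mathbf w^\ast\approx\mathbf v$ of $\mathbf M(\mathbf w)$, first pin down $\mathbf v_x$ using that $xzxyty$, $xzx$ and $xtx$ are isoterms for $\mathbf M(\mathbf w)$ (Lemmas~\ref{L: isoterms for M(xzxyty)} and~\ref{L: M(W) in V}), then restrict to the letter subsets over which $\mathbf w^\ast$ projects to a copy of $\mathbf w$ so as to force $\occ_x(\mathbf v)=2$ and to place the two occurrences of $x$ inside a single block, and conclude $\mathbf v=\mathbf w^\ast$. By Lemma~\ref{L: M(W) in V} this gives $\mathbf C:=\mathbf M(\mathbf w^\ast)\subseteq\mathbf M(\mathbf w)$; and since $\mathbf w$ is recovered from $\mathbf w^\ast$ by deleting letters without disturbing the first occurrences of $x$ and $y$, a $\FIC$-class argument as in Proposition~\ref{P: L(M(a_{n,m}[rho])) is not distributive} shows in addition that $\mathbf w$ is an isoterm for $\mathbf C$, so $\mathbf C$ does not satisfy $\mathbf w\approx\mathbf w'$.

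The other two elements of the pentagon are built as follows. Let $\mathbf A$ be generated by the submonoid of $M(\mathbf w^\ast)$ in which the blocks flanking the scattered occurrence of $x$ are left unsplit, so that the first occurrences of $x$ and $y$ come to lie inside two-letter factors, exactly as the submonoid $\langle x,z,ys,yt,1\rangle\cong M(xzxyty)$ of $M(xysxzytz)$ does in the case $n=m=0$ (where $\mathbf w$ is $xysxzytz$ up to renaming of letters). Then $\mathbf A\subsetneq\mathbf C$, and $\mathbf A$ satisfies $\mathbf w\approx\mathbf w'$ — verified directly on the generating monoid, exactly as $M(xzxyty)$ satisfies $xysxzytz\approx yxsxzytz$. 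Next let $\mathbf B:=\mathbf N_0\wedge\mathbf M(\mathbf w)$, where $\mathbf N_0$ is a monoid variety satisfying $\alpha$ and $\beta$ but violating an auxiliary identity $\varepsilon$ that is nevertheless satisfied by $\mathbf M(\mathbf w)$ (in the base case $\varepsilon$ is $xyzxy\approx yxzxy$ and $\mathbf M(\mathbf w)=\mathbf M(xysxzytz)$ satisfies it, as checked in Proposition~\ref{P: M(xzxyty) vee N}). Then $\mathbf B$ satisfies $\alpha$, $\beta$ and $\varepsilon$ simultaneously, whence $\mathbf w\approx\mathbf w'$ is deducible in $\mathbf B$, and a fortiori in $\mathbf B\wedge\mathbf C$, by a short chain $\mathbf w\stackrel{\beta}\approx\cdots\stackrel{\varepsilon}\approx\cdots\stackrel{\alpha,\beta}\approx\mathbf w'$ generalising the one in Proposition~\ref{P: M(xzxyty) vee N}. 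Finally $\mathbf w^\ast$ and $\mathbf N_0$ must be chosen so that $\mathbf w^\ast$ remains an isoterm for $\mathbf A\vee\mathbf B$: since $\FIC(\mathbf A\vee\mathbf B)=\FIC(\mathbf A)\cap\FIC(\mathbf B)$, it suffices that the $\FIC(\mathbf A)$- and $\FIC(\mathbf B)$-classes of $\mathbf w^\ast$ meet only in $\{\mathbf w^\ast\}$, and one checks that this holds because the identities available to $\mathbf A$ and to $\mathbf B$ act on the long word $\mathbf w^\ast$ in mutually transverse ways (on the one side only through the transposition of $x$ and $y$, on the other only through $\alpha$, $\beta$ and the identities of $\mathbf M(\mathbf w)$). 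By Lemma~\ref{L: M(W) in V} this yields $\mathbf C=\mathbf M(\mathbf w^\ast)\subseteq\mathbf A\vee\mathbf B$, so that $(\mathbf A\vee\mathbf B)\wedge\mathbf C=\mathbf C$.

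Putting everything together: $\mathbf A\subseteq\mathbf C$, and both $\mathbf A$ and $\mathbf B\wedge\mathbf C$ satisfy $\mathbf w\approx\mathbf w'$, so $\mathbf A\vee(\mathbf B\wedge\mathbf C)$ satisfies $\mathbf w\approx\mathbf w'$ as well, whereas $\mathbf C$ does not; hence
$$
\mathbf A\vee(\mathbf B\wedge\mathbf C)\subsetneq\mathbf C=(\mathbf A\vee\mathbf B)\wedge\mathbf C ,
$$
which shows that $\mathfrak L(\mathbf M(\mathbf c_{n,m,n+m+1}[\rho]))$ is not modular and therefore not distributive. I expect the main obstacle to lie in two places: making the unfolding word $\mathbf w^\ast$ and all of the accompanying $\FIC$-class computations precise and uniform over an arbitrary pair $(n,m)$ and an arbitrary $\rho\in S_{n+m,n+m+1}$ — this is where the combinatorial bookkeeping sits — and isolating the correct auxiliary identity $\varepsilon$, together with the verification that $\mathbf M(\mathbf w)$ satisfies $\varepsilon$ and that $\mathbf w\approx\mathbf w'$ is deducible from $\alpha$, $\beta$ and $\varepsilon$. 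Everything else is a direct transcription of the arguments already carried out in Section~\ref{Sec: non-distributive}.
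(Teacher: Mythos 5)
There is a genuine logical flaw in your pentagon, and it is structural rather than a matter of missing bookkeeping. You require simultaneously that (i) $\mathbf A$ satisfies $\mathbf w\approx\mathbf w'$, (ii) $\mathbf B=\mathbf N_0\wedge\mathbf M(\mathbf w)$ satisfies $\alpha$, $\beta$ and $\varepsilon$ and hence $\mathbf w\approx\mathbf w'$, (iii) $\mathbf C=\mathbf M(\mathbf w^\ast)\subseteq\mathbf A\vee\mathbf B$, and (iv) $\mathbf w$ is an isoterm for $\mathbf C$. These are inconsistent: by (i) and (ii) the join $\mathbf A\vee\mathbf B$ satisfies $\mathbf w\approx\mathbf w'$, so by (iii) every subvariety of it, in particular $\mathbf C$, satisfies $\mathbf w\approx\mathbf w'$, contradicting (iv). The model you are transplanting, Proposition~\ref{P: M(xzxyty) vee N}, avoids this precisely because there the ``$b$'' of the pentagon is the full variety $\mathbf N$, which is \emph{not} contained in the ambient factor-monoid variety and \emph{violates} the auxiliary identity $xyzxy\approx yxzxy$; only the meet $\mathbf N\wedge\mathbf V$ collapses $xysxzytz$. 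That is why that proposition establishes non-modularity of $\mathfrak L(\mathbf M(xzxyty)\vee\mathbf N)$ and not of $\mathfrak L(\mathbf M(xysxzytz))$. As soon as you force $\mathbf B$ inside $\mathbf M(\mathbf w)$ (which you must, since the pentagon has to live in $\mathfrak L(\mathbf M(\mathbf w))$), $\mathbf B$ inherits $\varepsilon$ and therefore already satisfies $\mathbf w\approx\mathbf w'$ on its own, and the separating identity can no longer survive in $(\mathbf A\vee\mathbf B)\wedge\mathbf C$. In general, if the ``$a$'' and ``$b$'' of a pentagon both satisfy an identity $\sigma$, then $(a\vee b)\wedge c$ satisfies $\sigma$ too, so $\sigma$ cannot separate $a\vee(b\wedge c)$ from $(a\vee b)\wedge c$; you need $b$ itself to violate $\sigma$ while $b\wedge c$ satisfies it.

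The paper's proof is organized quite differently to get around exactly this obstacle. It first produces \emph{two} incomparable extensions $\mathbf c_{n,m,k+2}[\pi]$ and $\mathbf c_{n,m,k+2}[\tau]$ of $\mathbf c_{n,m,k}[\rho]$ that are isoterms for $\mathbf M(\mathbf c_{n,m,k}[\rho])$, reduces via a lemma of Jackson and Sapir to the join $\mathbf M(\mathbf c_{n,m,k+2}[\pi],\mathbf c_{n,m,k+2}[\tau])$, and then builds a \emph{four}-word configuration $\mathbf v_{\xi,\eta}$ ($\xi,\eta\in S_2$) forming a single $\FIC$-class of $\mathbf M(\mathbf c_{n,m,k+2}[\tau])$. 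The three pentagon elements $\mathbf X,\mathbf Y,\mathbf Z$ are cut out by imposing different pairings among $\mathbf v_1,\dots,\mathbf v_4$; the separating identity $\mathbf v_1\approx\mathbf v_3$ is satisfied by $\mathbf Z$ and by $\mathbf X\wedge\mathbf Y$, but by \emph{neither} $\mathbf X$ nor $\mathbf Y$ individually, which is exactly the configuration your two-word scheme cannot produce. If you want to salvage your approach you would need at least a third word (a ``detour'' word through which $\mathbf w$ and $\mathbf w'$ are connected in one variety but not the other), which is in effect what the paper's $\mathbf v_{\xi,\eta}$ construction provides.
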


\begin{proof}
For any $p\in\mathbb N$, $\theta\in S_p$ and $q,r\in\{1,2,\dots, p+1\}$, we define the permutation $\theta_{q,r}\in S_{p+1}$ as follows: $q\theta_{q,r} = r$, for any $i=1,2,\dots,q-1$,
$$ 
i\theta_{q,r} = 
\begin{cases} 
i\theta & \text{if }i\theta< r, \\ 
i\theta+1 & \text{if }r\le i\theta, 
\end{cases} 
$$ 
and, for any $i=q+1,q+2,\dots,p+1$,
$$ 
i\theta_{q,r} = 
\begin{cases} 
(i-1)\theta & \text{if }(i-1)\theta< r, \\ 
(i-1)\theta+1 & \text{if }r\le (i-1)\theta. 
\end{cases} 
$$ 

Put $k=n+m+1$ and $\pi=(\rho_{1,k})_{2k+1,2k+1}$.
Let $\mathbf c_{n,m,k+2}[\pi]\approx \mathbf c$ be an identity that holds in $\mathbf M(\mathbf c_{n,m,k+2}[\rho])$.
Clearly, the word $xyx$ is an isoterm for $\mathbf M(\mathbf c_{n,m,k}[\rho])$.
Then Lemma~\ref{L: xt_1x...t_kx is isoterm} implies that 
$$
\mathbf c=\biggl(\prod_{i=1}^n z_it_i\biggr)\mathbf c^\prime t\biggl(\prod_{i=n+1}^{n+m} z_it_i\biggr)\mathbf c^{\prime\prime}\biggl(\prod_{i=n+m+1}^{2k+1} t_iz_i\biggr),
$$ 
where $\mathbf c^\prime$ and $\mathbf c^{\prime\prime}$ are linear words with 
$$
\con(\mathbf c^\prime)=\{x,y\}\  \text{ and } \ \con(\mathbf c^{\prime\prime})=\{x,y,z_1,z_2,\dots,z_{2k+1}\}. 
$$
Let $1\le p\le 2k$.
Evidently, $xyzxty$ is an isoterm for $\mathbf M(\mathbf c_{n,m,k}[\rho])$.
Then if $n+m<p\pi,(p+1)\pi$, then $({_{1\mathbf c^{\prime\prime}}}z_{p\pi})<({_{1\mathbf c^{\prime\prime}}}z_{(p+1)\pi})$.
Further, since $k>0$, one can show that a non-trivial identity of the form $xzxyty\approx \mathbf v$ implies some non-trivial identity of the form $\mathbf c_{n,m,k}[\rho] \approx \mathbf w$, whence $xzxyty$ is an isoterm for $\mathbf M(\mathbf c_{n,m,k}[\rho])$.
Hence if $p\pi\le n+m<(p+1)\pi$ or $(p+1)\pi\le n+m<p\pi$, then $({_{1\mathbf c^{\prime\prime}}}z_{p\pi})<({_{1\mathbf c^{\prime\prime}}}z_{(p+1)\pi})$.
The case when $p\pi,(p+1)\pi\le n+m$ is impossible by definition of $\pi$ and the fact that $\rho\in S_{n+m,n+m+1}$.
We see that $({_{1\mathbf c^{\prime\prime}}}z_{p\pi})<({_{1\mathbf c^{\prime\prime}}}z_{(p+1)\pi})$ in either case.
By a similar argument we can show that $({_{1\mathbf c^{\prime\prime}}}x)<({_{1\mathbf c^{\prime\prime}}}z_{1\pi})$ and $({_{1\mathbf c^{\prime\prime}}}z_{(2k+1)\pi})<({_{1\mathbf c^{\prime\prime}}}y)$.
Hence $\mathbf c^{\prime\prime}=xz_{1\pi}z_{2\pi}\cdots z_{(2k+1)\pi}y$.

Further, by definition, $(\mathbf c_{n,m,k+2}[\pi])_X$ coincides (up to renaming of letters) with $\mathbf c_{n,m,k}[\rho]$ for $X=\{z_{1\pi},z_{(2k+1)\pi},t_{1\pi},t_{(2k+1)\pi}\}$.
Then Lemma~\ref{L: M(W) in V} implies that $\mathbf c_X=(\mathbf c_{n,m,k+2}[\pi])_X$.
In particular, $\mathbf c^\prime=xy$.
Thus, $\mathbf c=\mathbf c_{n,m,k+2}[\pi]$. 
We see that $\mathbf c_{n,m,k+2}[\pi]$ is an isoterm for $\mathbf M(\mathbf c_{n,m,k}[\rho])$.
By similar arguments we can show that if $\tau=(\rho_{1,2k})_{2k+1,k}$, then $\mathbf c_{n,m,k+2}[\tau]$ is an isoterm for $\mathbf M(\mathbf c_{n,m,k}[\rho])$.
Then $M(\mathbf c_{n,m,k+2}[\pi])$ and $M(\mathbf c_{n,m,k+2}[\tau])$ lie in $\mathbf M(\mathbf c_{n,m,k}[\rho])$.
It is routine to check that the monoids $M(\mathbf c_{n,m,k+2}[\pi])$ and $M(\mathbf c_{n,m,k+2}[\tau])$ satisfy the identities $\mathbf c_{n,m,k+2}[\tau] \approx \mathbf c_{n,m,k+2}^\prime[\tau]$ and $\mathbf c_{n,m,k+2}[\pi] \approx \mathbf c_{n,m,k+2}^\prime[\pi]$, respectively.
So, we have proved that the variety $\mathbf M(\mathbf c_{n,m,k}[\rho])$ contains two incomparable subvarieties $\mathbf M(\mathbf c_{n,m,k+2}[\pi])$ and $\mathbf M(\mathbf c_{n,m,k+2}[\tau])$.
In view of this fact and Lemma~5.1 in~\cite{Jackson-Sapir-00}, it suffices to show that the lattice $\mathfrak L(\mathbf M(\mathbf c_{n,m,k+2}[\pi],\mathbf c_{n,m,k+2}[\tau]))$ is not modular.

For any $\xi,\eta \in S_2$, we define the word:
$$
\mathbf v_{\xi,\eta}= \mathbf p\,a_1b_1\, x_{1\xi}x_{2\xi}\, y_{1\eta}y_{2\eta}\, b_2a_2\,\mathbf q\mathbf r\mathbf s,
$$
where
$$
\begin{aligned}
&\mathbf p = \biggl(\prod_{i=1}^n z_it_i\biggr)\biggl(\prod_{i=1}^n z_i^\prime t_i^\prime\biggr)\biggl(\prod_{i=1}^n z_i^{\prime\prime} t_i^{\prime\prime}\biggr),\\
&\mathbf q = t\biggl(\prod_{i=n+1}^{k-1} z_it_i\biggr)\biggl(\prod_{i=n+1}^{k-1} z_i^\prime t_i^\prime\biggr)\biggl(\prod_{i=n+1}^{k-1} z_i^{\prime\prime} t_i^{\prime\prime}\biggr),\\
&\mathbf r =x_1z_{1\pi}^\prime a_1\biggl(\prod_{i=2}^{2k} z_{i\pi}^\prime\biggr) b_2z_{(2k+1)\pi}^\prime x_2 \biggl(\prod_{i=1}^{2k+1}z_{i\tau}\biggr) y_1z_{1\pi}^{\prime\prime} b_1\biggl(\prod_{i=2}^{2k} z_{i\pi}^{\prime\prime}\biggr)a_2z_{(2k+1)\pi}^{\prime\prime} y_2,\\
&\mathbf s = \biggl(\prod_{i=k}^{2k+1} t_iz_i\biggr)\biggl(\prod_{i=k}^{2k+1} t_i^\prime z_i^\prime\biggr)\biggl(\prod_{i=k}^{2k+1} t_i^{\prime\prime} z_i^{\prime\prime}\biggr).
\end{aligned}
$$

Let $\varepsilon$ denote the trivial permutation from $S_2$.
We need the following two auxiliary facts.

\begin{lemma}
\label{L: FIC(M(c_{n,m,k+2}[tau]))-class}
The set $\{\mathbf v_{\xi,\eta}\mid \xi,\eta\in S_2\}$ forms a $\FIC(\mathbf M(\mathbf c_{n,m,k+2}[\tau]))$-class.\qed
\end{lemma}

\begin{proof}[Proof of Lemma~\ref{L: FIC(M(c_{n,m,k+2}[tau]))-class}]
It is routine to check that $M(\mathbf c_{n,m,k+2}[\tau])$ satisfies $\mathbf v_{\xi_1,\eta_1}\approx \mathbf v_{\xi_2,\eta_2}$ for any $\xi_1,\eta_1,\xi_2,\eta_2\in S_2$.
Let now $\mathbf v_{\varepsilon,\varepsilon}\approx \mathbf v$ be an identity of $M(\mathbf c_{n,m,k+2}[\tau])$.
Clearly, $xyx$ is an isoterm for $M(\mathbf c_{n,m,k+2}[\tau])$. 
Then Lemma~\ref{L: xt_1x...t_kx is isoterm} implies that $\mathbf v=\mathbf p\mathbf v^\prime \mathbf q\mathbf r^\prime$, where $\mathbf v^\prime$ and $\mathbf r^\prime$ are linear words with 
$$
\con(\mathbf v^\prime)=\{a_1,a_2,b_1,b_2,x_1,x_2,y_1,y_2\}\  \text{ and } \ \con(\mathbf r)=\con(\mathbf r^\prime). 
$$
Further, arguments similar to ones from the second paragraph of the proof of Proposition~\ref{P: L(M(c_{n,m,n+m+1}[rho])) is not modular} imply that all the letters occur in $\mathbf r^\prime$ in the same order as in $\mathbf r$ and so $\mathbf r^\prime=\mathbf r$.
Since $(\mathbf v_{\varepsilon,\varepsilon})_X$ coincides (up to renaming of letters) with $\mathbf c_{n,m,k+2}[\tau]$ for 
$$
X=\{a_1,b_1,t\}\cup\{z_i,t_i\mid 1\le i\le 2k+1\},
$$ 
Lemma~\ref{L: M(W) in V} implies that $(_{1\mathbf v}a_1)<(_{1\mathbf v}b_1)$.
By a similar argument we can show that 
$$
\begin{aligned}
&(_{1\mathbf v}b_1)<(_{1\mathbf v}x_1),\ (_{1\mathbf v}b_1)<(_{1\mathbf v}x_2),\ (_{1\mathbf v}x_1)<(_{1\mathbf v}y_1),\\
&(_{1\mathbf v}x_1)<(_{1\mathbf v}y_2),\ (_{1\mathbf v}x_2)<(_{1\mathbf v}y_1),\ (_{1\mathbf v}x_2)<(_{1\mathbf v}y_2),\\
&(_{1\mathbf v}y_1)<(_{1\mathbf v}b_2),\ (_{1\mathbf v}y_2)<(_{1\mathbf v}b_2),\
(_{1\mathbf v}b_2)<(_{1\mathbf v}a_2).
\end{aligned}
$$
It follows that $\mathbf v =\mathbf v_{\xi,\eta}$ for some $\xi,\eta\in S_2$.
Therefore, the set $\{\mathbf v_{\xi,\eta}\mid \xi,\eta\in S_2\}$ forms a $\FIC(\mathbf M(\mathbf c_{n,m,k+2}[\tau]))$-class.
\end{proof}

\begin{lemma}
\label{L: v_{xi,eta}}
Let $\xi_1,\eta_1,\xi_2,\eta_2\in S_2$. 
If a non-trivial identity $\mathbf v_{\varepsilon,\varepsilon} \approx \mathbf v$ is directly deducible from the identity $\mathbf v_{\xi_1,\eta_1} \approx \mathbf v_{\xi_2,\eta_2}$, then $\{\mathbf v_{\varepsilon,\varepsilon},\mathbf v\}=\{\mathbf v_{\xi_1,\eta_1},\mathbf v_{\xi_2,\eta_2}\}$.
\end{lemma}

\begin{proof}[Proof of Lemma~\ref{L: v_{xi,eta}}]
Since $\mathbf v_{\varepsilon,\varepsilon} \approx \mathbf v$ is directly deducible from $\mathbf v_{\xi_1,\eta_1} \approx \mathbf v_{\xi_2,\eta_2}$, there are words $\mathbf a,\mathbf b\in \mathfrak X^\ast$ and an endomorphism $\phi$ of $\mathfrak X^\ast$ such that $\mathbf v_{\varepsilon,\varepsilon}=\mathbf a\phi(\mathbf v_{\xi_1,\eta_1})\mathbf b$ and $\mathbf v=\mathbf a\phi(\mathbf v_{\xi_2,\eta_2})\mathbf b$.
Further, Lemma~\ref{L: FIC(M(c_{n,m,k+2}[tau]))-class} implies that $\mathbf v=\mathbf v_{\xi,\eta}$ for some $\xi,\eta\in S_2$.
Since the identity $\mathbf v_{\varepsilon,\varepsilon} \approx \mathbf v$ is non-trivial, $(\xi,\eta)\ne(\varepsilon,\varepsilon)$.
By symmetry, we may assume that $\xi\ne\varepsilon$.
Then $(_{1\mathbf v}x_2)<(_{1\mathbf v}x_1)$.
This is only possible when one of the following holds:
\begin{itemize}
\item $\xi_1\ne\xi_2$, $x_1\in\con(\phi(x_{1\xi_1}))$ and $x_2\in\con(\phi(x_{2\xi_1}))$;
\item $\eta_1\ne\eta_2$, $x_1\in\con(\phi(y_{1\eta_1}))$ and $x_2\in\con(\phi(y_{2\eta_1}))$.
\end{itemize}
We note that every factor of length $>1$ of the word $\mathbf v_{\varepsilon,\varepsilon}$ has exactly one occurrence in this word.
It follows that
\begin{itemize}
\item[\textup{($\ast$)}] $\phi(c)$ is either the empty word or a letter for any $c\in\mul(\mathbf v_{\xi_1,\eta_1})$.
\end{itemize}
In view of this fact, one of the following holds:
\begin{itemize}
\item[\textup{(a)}] $\xi_1\ne\xi_2$, $\phi(x_{1\xi_1})=\phi(x_{2\xi_2})=x_1$ and $\phi(x_{2\xi_1})=\phi(x_{1\xi_2})=x_2$;
\item[\textup{(b)}] $\eta_1\ne\eta_2$, $\phi(y_{1\eta_1})=\phi(y_{2\eta_2})=x_1$ and $\phi(y_{2\eta_1})=\phi(y_{1\eta_2})=x_2$.
\end{itemize}

Suppose that~(a) holds.
Since $(_{2\mathbf v_{\varepsilon,\varepsilon}}x_1)<(_{2\mathbf v_{\varepsilon,\varepsilon}}x_2)$, we have $(_{2\mathbf v_{\xi_1,\eta_1}}x_{1\xi_1})<(_{2\mathbf v_{\xi_1,\eta_1}}x_{2\xi_1})$.
This implies that $1\xi_1=1$ and $2\xi_1=2$, whence $\xi_1=\varepsilon$.
Then
$$
\phi\biggl(z_{1\pi}^\prime a_1\biggl(\prod_{i=2}^{2k} z_{i\pi}^\prime\biggr) b_2z_{(2k+1)\pi}^\prime\biggr)=z_{1\pi}^\prime a_1\biggl(\prod_{i=2}^{2k} z_{i\pi}^\prime\biggr) b_2z_{(2k+1)\pi}^\prime.
$$
It follows from~($\ast$) that $\phi(a_1)=a_1$, $\phi(b_2)=b_2$ and $\phi(z_{i\pi}^\prime)=z_{i\pi}^\prime$ for any $i=1,2,\dots,2k+1$.
Then 
$$
\phi(b_1x_{1\xi_1}x_{2\xi_1}y_{1\eta_1}y_{2\eta_1})=b_1x_1x_2y_1y_2.
$$
Now we apply~($\ast$) again and obtain that $\phi(b_1)=b_1$ and $\phi(y_i)=y_{i\eta_1}$ for any $i=1,2$.
Since $(_{2\mathbf v_{\varepsilon,\varepsilon}}y_1)<(_{2\mathbf v_{\varepsilon,\varepsilon}}y_2)$, we have $(_{2\mathbf v_{\xi_1,\eta_1}}y_{1\eta_1})<(_{2\mathbf v_{\xi_1,\eta_1}}y_{2\eta_1})$.
This implies that $1\eta_1=1$ and $2\eta_1=2$, whence $\eta_1=\varepsilon$.
Then $\mathbf v= \mathbf v_{\xi_2,\eta_2}$ and so $\{\mathbf v_{\varepsilon,\varepsilon},\mathbf v\}=\{\mathbf v_{\xi_1,\eta_1},\mathbf v_{\xi_2,\eta_2}\}$.

Suppose that~(b) holds.
Since $(_{2\mathbf v_{\varepsilon,\varepsilon}}x_1)<(_{2\mathbf v_{\varepsilon,\varepsilon}}x_2)$, we have $(_{2\mathbf v_{\xi_1,\eta_1}}y_{1\eta_1})<(_{2\mathbf v_{\xi_1,\eta_1}}y_{2\eta_1})$.
This implies that $1\eta_1=1$ and $2\eta_1=2$, whence $\eta_1=\varepsilon$.
Then
$$
\phi\biggl(z_{1\pi}^{\prime\prime} b_1\biggl(\prod_{i=2}^{2k} z_{i\pi}^{\prime\prime}\biggr) a_2z_{(2k+1)\pi}^{\prime\prime}\biggr)=z_{1\pi}^\prime a_1\biggl(\prod_{i=2}^{2k} z_{i\pi}^\prime\biggr) b_2z_{(2k+1)\pi}^\prime.
$$
It follows from~($\ast$) that $\phi(b_1)=a_1$ and $\phi(a_2)=b_2$.
Then 
$$
\phi(x_{1\xi_1}x_{2\xi_1}y_{1\eta_1}y_{2\eta_1}b_2)=b_1x_1x_2y_1y_2.
$$
Now we apply~($\ast$) again and obtain that $\phi(y_{2\eta_1})=y_1$, which contradicts~(b).
Therefore,~(b) is impossible.

Lemma~\ref{L: v_{xi,eta}} is proved.
\end{proof}

One can return to the proof of Proposition~\ref{P: L(M(c_{n,m,n+m+1}[rho])) is not modular}.
Let 
$$
\begin{aligned}
&\mathbf X = \mathbf M(\mathbf c_{n,m,k+2}[\pi],\mathbf c_{n,m,k+2}[\tau])\wedge\var\{\mathbf v_1 \approx \mathbf v_2,\,\mathbf v_3 \approx \mathbf v_4\},\\
&\mathbf Y = \mathbf M(\mathbf c_{n,m,k+2}[\pi],\mathbf c_{n,m,k+2}[\tau])\wedge\var\{\mathbf v_2 \approx \mathbf v_4\},\\
&\mathbf Z = \mathbf M(\mathbf c_{n,m,k+2}[\pi],\mathbf c_{n,m,k+2}[\tau])\wedge\var\{\mathbf v_1 \approx \mathbf v_3,\,\mathbf v_2 \approx \mathbf v_4\}.
\end{aligned}
$$ 
Consider an identity $\mathbf u \approx \mathbf u^\prime$ of $\mathbf X$ with $\mathbf u\in \{\mathbf v_1, \mathbf v_2\}$.
We are going to show that $\mathbf u^\prime \in \{\mathbf v_1, \mathbf v_2\}$.
In view of Proposition~\ref{P: deduction}, we may assume without loss of generality that  either $\mathbf u \approx \mathbf u^\prime$ holds in $\mathbf M(\mathbf c_{n,m,k+2}[\pi],\mathbf c_{n,m,k+2}[\tau])$ or $\mathbf u \approx \mathbf u^\prime$ is directly deducible from $\mathbf v_1 \approx \mathbf v_2$ or $\mathbf v_3 \approx \mathbf v_4$. 
According to Lemma~\ref{L: v_{xi,eta}}, $\mathbf u \approx \mathbf u^\prime$ cannot be directly deducible from $\mathbf v_3 \approx \mathbf v_4$ and if $\mathbf u \approx \mathbf u^\prime$ is directly deducible from $\mathbf v_1 \approx \mathbf v_2$, then $\{\mathbf u, \mathbf u^\prime\}\subseteq \{\mathbf v_1, \mathbf v_2\}$.
Therefore, it remains to consider the case when $\mathbf u \approx \mathbf u^\prime$ is satisfied by $\mathbf M(\mathbf c_{n,m,k+2}[\pi],\mathbf c_{n,m,k+2}[\tau])$.
It follows from Lemma~\ref{L: FIC(M(c_{n,m,k+2}[tau]))-class} that $\mathbf u^\prime\in\{\mathbf v_1,\mathbf v_2,\mathbf v_3,\mathbf v_4\}$.
If $\mathbf u^\prime\in\{\mathbf v_3,\mathbf v_4\}$, then the identity $\mathbf u(X)\approx \mathbf u^\prime(X)$ coincides (up to renaming of letters) with the identity $\mathbf c_{n,m,k+2}[\pi]\approx \mathbf c_{n,m,k+2}^\prime[\pi]$, where
$$
X=\{y_1,y_2,t\}\cup\{z_i^{\prime\prime},t_i^{\prime\prime}\mid1\le i\le 2k+1\}.
$$
But this is impossible because the variety $\mathbf M(\mathbf c_{n,m,k+2}[\pi],\mathbf c_{n,m,k+2}[\tau])$ violates this identity by Lemma~\ref{L: M(W) in V}.
We see that $\mathbf u^\prime\in \{\mathbf v_1, \mathbf v_2\}$ in either case and so the set $\{\mathbf v_1,\mathbf v_2\}$ forms a $\FIC(\mathbf X)$-class.
By similar arguments we can show that the set $\{\mathbf v_1,\mathbf v_3\}$ forms a $\FIC(\mathbf Z)$-class and the word $\mathbf v_1$ is an isoterm for $\mathbf Y$.
This implies that $\mathbf v_1$ is an isoterm for $(\mathbf X\vee \mathbf Z)\wedge \mathbf Y$.
Clearly, both $\mathbf X\wedge \mathbf Y$ and $\mathbf Z$ satisfy the identity $\mathbf v_1\approx \mathbf v_3$.
Therefore, $\mathbf v_1$ is not an isoterm for $(\mathbf X\wedge \mathbf Y)\vee\mathbf Z$.
Since $\mathbf Z\subseteq\mathbf Y$, we have
$$
(\mathbf X\wedge \mathbf Y)\vee\mathbf Z\subset (\mathbf X\vee \mathbf Z)\wedge \mathbf Y.
$$
It follows that the lattice $\mathfrak L(\mathbf M(\mathbf c_{n,m,n+m+1}[\rho]))$ is not modular.
\end{proof}

The proof of the following statement is very similar to the proof of Proposition~\ref{P: L(M(c_{n,m,n+m+1}[rho])) is not modular} and so we omit it.

\begin{proposition}
\label{P: L(M(c_{n,m,0}[rho])) is not modular}
The lattice $\mathfrak L(\mathbf M(\mathbf c_{n,m,0}[\rho]))$ is not modular and so not distributive for any $m,m\in \mathbb N_0$ and $\rho\in S_{n+m}$.\qed
\end{proposition}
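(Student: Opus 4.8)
The plan is to transcribe, step by step, the proof of Proposition~\ref{P: L(M(c_{n,m,n+m+1}[rho])) is not modular} to the present situation, specialised to $k=0$. The first step is the reduction to a variety generated by two incomparable factor monoids: starting from the arbitrary permutation $\rho\in S_{n+m}$, I would build, by inserting two new points into $\rho$ via the same operation $\theta\mapsto\theta_{q,r}$ used in the cited proof, two permutations $\pi,\tau\in S_{n+m+2}$ for which $\mathbf c_{n,m,2}[\pi]$ and $\mathbf c_{n,m,2}[\tau]$ are both isoterms for $\mathbf M(\mathbf c_{n,m,0}[\rho])$, while $M(\mathbf c_{n,m,2}[\pi])$ satisfies $\mathbf c_{n,m,2}[\tau]\approx\mathbf c_{n,m,2}^\prime[\tau]$ and $M(\mathbf c_{n,m,2}[\tau])$ satisfies $\mathbf c_{n,m,2}[\pi]\approx\mathbf c_{n,m,2}^\prime[\pi]$. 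The isoterm statements are verified via Lemma~\ref{L: M(W) in V} and Lemma~\ref{L: xt_1x...t_kx is isoterm}, using that $xyx$, $xyzxty$ and $xzxyty$ are isoterms for $\mathbf M(\mathbf c_{n,m,0}[\rho])$. Since this exhibits two incomparable subvarieties $\mathbf M(\mathbf c_{n,m,2}[\pi])$ and $\mathbf M(\mathbf c_{n,m,2}[\tau])$ of $\mathbf M(\mathbf c_{n,m,0}[\rho])$, Lemma~5.1 in~\cite{Jackson-Sapir-00} reduces the problem to showing that $\mathfrak L(\mathbf M(\mathbf c_{n,m,2}[\pi],\mathbf c_{n,m,2}[\tau]))$ is not modular.

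The second step is to introduce, for $\xi,\eta\in S_2$, bookkeeping words $\mathbf v_{\xi,\eta}$ of the same shape as in the cited proof --- three copies of the block governed by $\pi$ (distinguished by priming the letters), one block governed by $\tau$, the swap-letters $x_1,x_2,y_1,y_2$ and the markers $a_1,a_2,b_1,b_2$ --- with the index range $1,\dots,2k+1$ replaced by $1,\dots,n+m+2$ and with the middle $t$-chain block reduced to its separating letter $t$ (since $k=0$). One then re-proves the two auxiliary facts: (i) the analogue of Lemma~\ref{L: FIC(M(c_{n,m,k+2}[tau]))-class}, that $\{\mathbf v_{\xi,\eta}\mid\xi,\eta\in S_2\}$ is a single $\FIC(\mathbf M(\mathbf c_{n,m,2}[\tau]))$-class, by checking that $M(\mathbf c_{n,m,2}[\tau])$ satisfies all the identities $\mathbf v_{\xi_1,\eta_1}\approx\mathbf v_{\xi_2,\eta_2}$ and, conversely, that any $\mathbf v$ with $\mathbf v_{\varepsilon,\varepsilon}\approx\mathbf v$ in this variety must equal some $\mathbf v_{\xi,\eta}$ --- this last point via Lemma~\ref{L: xt_1x...t_kx is isoterm} and the order constraints on $a_1,b_1,x_1,x_2,y_1,y_2,b_2,a_2$ forced by Lemma~\ref{L: M(W) in V}; and (ii) the analogue of Lemma~\ref{L: v_{xi,eta}}, that a direct deduction $\mathbf v_{\varepsilon,\varepsilon}\approx\mathbf v$ from $\mathbf v_{\xi_1,\eta_1}\approx\mathbf v_{\xi_2,\eta_2}$ forces $\{\mathbf v_{\varepsilon,\varepsilon},\mathbf v\}=\{\mathbf v_{\xi_1,\eta_1},\mathbf v_{\xi_2,\eta_2}\}$, by the same case analysis on which pair of swap-letters realises the forced inversion, using that every factor of $\mathbf v_{\varepsilon,\varepsilon}$ of length greater than $1$ occurs once, so that $\phi(c)$ is a letter or empty for each multiple letter $c$.

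The final step is to set, for a fixed enumeration $\mathbf v_1,\mathbf v_2,\mathbf v_3,\mathbf v_4$ of the four words $\mathbf v_{\xi,\eta}$,
\[
\begin{aligned}
&\mathbf X=\mathbf M(\mathbf c_{n,m,2}[\pi],\mathbf c_{n,m,2}[\tau])\wedge\var\{\mathbf v_1\approx\mathbf v_2,\,\mathbf v_3\approx\mathbf v_4\},\\
&\mathbf Y=\mathbf M(\mathbf c_{n,m,2}[\pi],\mathbf c_{n,m,2}[\tau])\wedge\var\{\mathbf v_2\approx\mathbf v_4\},\\
&\mathbf Z=\mathbf M(\mathbf c_{n,m,2}[\pi],\mathbf c_{n,m,2}[\tau])\wedge\var\{\mathbf v_1\approx\mathbf v_3,\,\mathbf v_2\approx\mathbf v_4\},
\end{aligned}
\]
and to check, via Proposition~\ref{P: deduction} together with the two lemmas of the previous step, that $\{\mathbf v_1,\mathbf v_2\}$ is a $\FIC(\mathbf X)$-class, that $\{\mathbf v_1,\mathbf v_3\}$ is a $\FIC(\mathbf Z)$-class, and that $\mathbf v_1$ is an isoterm for $\mathbf Y$; the one place where one invokes that $\mathbf M(\mathbf c_{n,m,2}[\pi],\mathbf c_{n,m,2}[\tau])$ violates $\mathbf c_{n,m,2}[\pi]\approx\mathbf c_{n,m,2}^\prime[\pi]$ in order to exclude $\mathbf u^\prime\in\{\mathbf v_3,\mathbf v_4\}$ is settled by Lemma~\ref{L: M(W) in V}. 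It then follows that $\mathbf v_1$ is an isoterm for $(\mathbf X\vee\mathbf Z)\wedge\mathbf Y$ but not for $(\mathbf X\wedge\mathbf Y)\vee\mathbf Z$, and since $\mathbf Z\subseteq\mathbf Y$ we obtain $(\mathbf X\wedge\mathbf Y)\vee\mathbf Z\subset(\mathbf X\vee\mathbf Z)\wedge\mathbf Y$; hence $\mathfrak L(\mathbf M(\mathbf c_{n,m,0}[\rho]))$ is not modular, and non-distributivity follows at once.

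The hard part will be the same as in the cited proof: making the right choices of the expansion permutations $\pi,\tau$ and of the words $\mathbf v_{\xi,\eta}$, and then pushing through the attendant isoterm bookkeeping. Two specific complications arise here. First, since $\rho$ now ranges over the whole group $S_{n+m}$ rather than over the alternating permutations $S_{n+m,n+m+1}$, the convenient dichotomy ``the case $p\pi,(p+1)\pi\le n+m$ is impossible'' used in the second paragraph of the cited proof is no longer available, so the left-to-right order of the middle $z$-letters in each relevant factor word must instead be recovered directly from the isoterms $xyx$, $xyzxty$ and $xzxyty$, and one must separately rule out that two such $z$-letters lie in a common block or get transposed. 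Second, the specialisation $k=0$ makes several blocks degenerate, so the index conventions in the definitions of $\mathbf c_{n,m,2}[\pi]$ and of $\mathbf v_{\xi,\eta}$ have to be set up with some care. Once $\pi$, $\tau$ and the words $\mathbf v_{\xi,\eta}$ are correctly in place, everything else is a mechanical transcription of the argument for Proposition~\ref{P: L(M(c_{n,m,n+m+1}[rho])) is not modular}.
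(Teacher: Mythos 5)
Your plan coincides with what the paper intends: the paper omits this proof precisely because it is the transcription of the argument for Proposition~\ref{P: L(M(c_{n,m,n+m+1}[rho])) is not modular} that you describe, including the passage to two incomparable subvarieties $\mathbf M(\mathbf c_{n,m,2}[\pi])$ and $\mathbf M(\mathbf c_{n,m,2}[\tau])$, the words $\mathbf v_{\xi,\eta}$, and the triple $\mathbf X,\mathbf Y,\mathbf Z$ witnessing non-modularity. For the one complication you flag --- ordering two second occurrences inside the final block, where the alternation dichotomy of $S_{n+m,n+m+1}$ is unavailable --- the isoterm you need is $xtyzxy$ (and its variant with the first occurrences crossed) rather than $xyzxty$ or $xzxyty$; it is available because $\alpha$ yields a non-trivial identity of the isoterm $\mathbf c_{n,m,0}[\rho]$ (apply it to two adjacent letters of the final block, substituting $1$ for a separator if necessary), so $\mathbf M(\mathbf c_{n,m,0}[\rho])$ violates $\alpha$ and Lemma~\ref{L: V does not contain M(xyzxty) or M(xtyzxy) or M(xzxyty)}(i) gives $M(xtyzxy)\in\mathbf M(\mathbf c_{n,m,0}[\rho])$.
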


\section{Auxiliary results}
\label{Sec: auxiliary results}

\subsection{Identities formed by words with one multiple letter}

Let 
$$
\mathbf A=\var\{x^2y\approx yx^2\}.
$$

\begin{lemma}
\label{L: one letter in block reduction}
Let $r,e_0,f_0,e_1,f_1,\dots,e_r,f_r\in\mathbb N_0$.
Then the identity
\begin{equation}
\label{one letter in a block}
x^{e_0}\biggl(\prod_{i=1}^r t_ix^{e_i}\biggr) \approx x^{f_0}\biggl(\prod_{i=1}^r t_ix^{f_i}\biggr),
\end{equation}
is equivalent within $\mathbf A$ to some identity
$$
x^{e_0^\prime}\biggl(\prod_{i=1}^r t_ix^{e_i^\prime}\biggr) \approx x^{f_0^\prime}\biggl(\prod_{i=1}^r t_ix^{f_i^\prime}\biggr)
$$
with $e_1^\prime,f_1^\prime,e_2^\prime,f_2^\prime,\dots,e_r^\prime,f_r^\prime\le1$.
\end{lemma}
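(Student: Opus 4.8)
The plan is to exploit the fact that the single defining identity $x^2y\approx yx^2$ of $\mathbf A$ forces $x^2$, and hence every even power of $x$, to be \emph{central}. First I would record that within $\mathbf A$ one has $x^2\mathbf w\approx\mathbf wx^2$ for every word $\mathbf w$ — an immediate induction on the length of $\mathbf w$, using that $x^2(\mathbf w_1\mathbf w_2)\approx\mathbf w_1x^2\mathbf w_2\approx(\mathbf w_1\mathbf w_2)x^2$ — and therefore $(x^2)^q\mathbf w\approx\mathbf w(x^2)^q$ for all $q\in\mathbb N_0$. Granting this, each block $x^{e_i}$ with $i\ge1$ splits as $x^{e_i}\approx(x^2)^{q_i}x^{e_i^\prime}$, where $e_i=2q_i+e_i^\prime$ and $e_i^\prime\in\{0,1\}$; the even part $(x^2)^{q_i}$ is central and so can be slid all the way to the front of the word, past $t_i$ and everything preceding it, while the residue $x^{e_i^\prime}$ stays in place.

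Carrying this out for $i=1,2,\dots,r$ and merging all the factors $(x^2)^{q_i}$ with the initial block, one obtains within $\mathbf A$
\[
x^{e_0}\biggl(\prod_{i=1}^r t_ix^{e_i}\biggr)\approx x^{e_0^\prime}\biggl(\prod_{i=1}^r t_ix^{e_i^\prime}\biggr),\qquad e_0^\prime:=e_0+2\sum_{i=1}^r q_i .
\]
Applying the same manipulation to the right-hand side of~\eqref{one letter in a block}, with $f_i=2p_i+f_i^\prime$, $f_i^\prime\in\{0,1\}$, and $f_0^\prime:=f_0+2\sum_{i=1}^r p_i$, shows that this right-hand side is $\mathbf A$-equal to $x^{f_0^\prime}\bigl(\prod_{i=1}^r t_ix^{f_i^\prime}\bigr)$.

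Since both sides of~\eqref{one letter in a block} are provably equal modulo $\mathbf A$ to the corresponding sides of
\[
x^{e_0^\prime}\biggl(\prod_{i=1}^r t_ix^{e_i^\prime}\biggr)\approx x^{f_0^\prime}\biggl(\prod_{i=1}^r t_ix^{f_i^\prime}\biggr),
\]
the two identities are interdeducible modulo $\mathbf A$ by Proposition~\ref{P: deduction}, that is, equivalent within $\mathbf A$; and by construction $e_1^\prime,f_1^\prime,\dots,e_r^\prime,f_r^\prime\le1$, as required. The argument has no genuine obstacle: everything rests on the centrality of $x^2$ in $\mathbf A$, and the only point needing care is the bookkeeping of the exponents accumulated at the front of the word.
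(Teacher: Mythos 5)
Your proof is correct and is essentially the paper's own argument: the paper likewise extracts the greatest even part $p_i$ of each exponent $e_i$ (so that $e_i-p_i\le 1$) and uses $x^2y\approx yx^2$ to slide these even powers to the front, showing each side of \eqref{one letter in a block} is $\mathbf A$-equal to the corresponding reduced side. The only cosmetic difference is that you justify centrality of $x^2$ by an induction that is not even needed, since substituting an arbitrary word for $y$ in $x^2y\approx yx^2$ already gives $x^2\mathbf w\approx\mathbf w x^2$.
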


\begin{proof}
Let $p_i$ and $q_i$ be the greatest even numbers such that $p_i\le e_i$ and $q_i\le f_i$ for any $i=1,2,\dots,r$.
Then the required conclusion follows from the fact that the identities
$$
x^{e_0}\biggl(\prod_{i=1}^r t_ix^{e_i}\biggr) \approx x^{e_0+\sum_{i=1}^rp_i}\biggl(\prod_{i=1}^r t_ix^{e_i-p_i}\biggr)\ \text{ and }\ 
x^{f_0}\biggl(\prod_{i=1}^r t_ix^{f_i}\biggr) \approx x^{f_0+\sum_{i=1}^rq_i}\biggl(\prod_{i=1}^r t_ix^{f_i-q_i}\biggr)
$$
are consequences of $x^2y\approx yx^2$.
\end{proof}

For any $n,m\in\mathbb N_0$, we fix notation for the following identity:
$$
\begin{aligned}
\delta_{n,m}&:\enskip x^mt_1xt_2x\cdots t_nx\approx x^{m+n}t_1t_2\cdots t_n.
\end{aligned}
$$

\begin{lemma}
\label{L: reduction to delta_{r,m}}
Let $\mathbf V$ be an aperiodic monoid subvariety of $\mathbf A$, $0\le e_1,f_1,\dots,e_n,f_r\le 1$ and $e_0,f_0\in\mathbb N_0$. 
If the identity~\eqref{one letter in a block} is non-trivial, then
$$
\mathbf V\{\eqref{one letter in a block}\}=\mathbf V\{x^e\approx x^f,\,\delta_{e-e_0,e_0},\,\delta_{f-f_0,f_0}\},
$$
where $e=\sum_{i=0}^re_i$ and $f=\sum_{i=0}^rf_i$.
\end{lemma}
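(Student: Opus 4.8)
The plan is to prove the two inclusions between the displayed varieties separately; by Proposition~\ref{P: deduction} this amounts, modulo $\mathbf V$, to deducing each defining identity of one side from those of the other.

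\emph{The inclusion $\supseteq$.} Here I would check that \eqref{one letter in a block} follows from $\{x^e\approx x^f,\ \delta_{e-e_0,e_0},\ \delta_{f-f_0,f_0}\}$ with no recourse to $\mathbf V$. Let $j_1<j_2<\dots<j_p$ be the indices $i\ge 1$ with $e_i=1$ (there are $p=e-e_0$ of them). Substituting the $k$th simple letter of $\delta_{e-e_0,e_0}$ by the word $t_{j_{k-1}+1}\cdots t_{j_k}$ (with $j_0:=0$) and appending the suffix $t_{j_p+1}\cdots t_r$, a single application of $\delta_{e-e_0,e_0}$ rewrites $\mathbf u$ into $x^e t_1t_2\cdots t_r$; symmetrically $\delta_{f-f_0,f_0}$ turns $\mathbf v$ into $x^f t_1t_2\cdots t_r$, and $x^e\approx x^f$ connects the two. (Neither $x^2y\approx yx^2$ nor aperiodicity is used here.)

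\emph{The inclusion $\subseteq$.} Put $\mathbf V'=\mathbf V\{\eqref{one letter in a block}\}$. Substituting $t_i\mapsto 1$ $(i=1,\dots,r)$ in \eqref{one letter in a block} gives $x^e\approx x^f$ in $\mathbf V'$. For the two $\delta$-identities, note that applying to $\mathbf u\approx x^e t_1t_2\cdots t_r$ the substitution $t_i\mapsto 1$ for each $i$ with $e_i=0$ produces, up to renaming of letters, precisely $\delta_{e-e_0,e_0}$ (its right-hand side being $x^e$ followed by the $e-e_0$ surviving simple letters). Thus the whole inclusion reduces to the

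\smallskip\noindent\textbf{Claim.} $\mathbf V'$ satisfies $\mathbf u\approx x^e t_1t_2\cdots t_r$.
\smallskip

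\noindent Indeed, the Claim together with $\mathbf u\approx\mathbf v$ and $x^e\approx x^f$ yields $\mathbf v\approx x^f t_1t_2\cdots t_r$ in $\mathbf V'$, of which $\delta_{f-f_0,f_0}$ is the analogous substitution instance.

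\emph{Proving the Claim --- the main obstacle.} If $\mathbf V$ is commutative the Claim is trivial; otherwise $M(xy)\in\mathbf V'$ by Lemma~\ref{L: does not contain M(xy)}, so by Lemma~\ref{L: decompositions of u and v} the other side of every identity of $\mathbf V'$ having side $\mathbf u$ must have the form $x^{g_0}t_1x^{g_1}\cdots t_rx^{g_r}$. The task is then to convert the hypothesis $\mathbf u\ne\mathbf v$ into enough room to push the scattered single occurrences of $x$ in $\mathbf u$ (one for each block with $e_i=1$, $i\ge1$) to the front. The available tools are: $x^2$ is central ($\mathbf V\supseteq\mathbf A$); and $x^N\approx x^{N+1}$ for some $N$ (aperiodicity), which together with the derived $x^e\approx x^f$ collapses $x^m\approx x^{\min(e,f)}$ for all $m\ge\min(e,f)$ whenever $e\ne f$. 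Starting from specializations of \eqref{one letter in a block} that retain a single simple letter --- e.g.\ $x^{e_0}t_1x^{e-e_0}\approx x^{f_0}t_1x^{f-f_0}$ --- one uses $x^2$-centrality to bring the trailing $x$-powers down modulo $2$ (as in the proof of Lemma~\ref{L: one letter in block reduction}) and extracts ``one-$x$ shift'' identities $x^q t x\approx x^{q+1}t$; feeding these back into $\mathbf u$, again with $x^2$-centrality, front-loads $\mathbf u$ one occurrence of $x$ at a time. This last step is where I expect the real difficulty to lie: $x^2$-centrality by itself cannot merge the single $x$'s trapped between consecutive simple letters, so \eqref{one letter in a block} must genuinely be used; one has to choose judiciously which simple letters to set to $1$ so that the specialization stays non-trivial, and one must organize the bookkeeping by an induction (say on $e_0+f_0$) with cases according to the parities of the exponents and to whether $e=f$ and $e_0=f_0$, disposing of the degenerate case $p=0$ (where $\delta_{0,e_0}$ is trivial) separately.
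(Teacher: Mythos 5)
Your reduction is sound as far as it goes: the inclusion $\supseteq$ is exactly the paper's first display (your substitution into $\delta_{e-e_0,e_0}$ is the same one-step deduction), the derivation of $x^e\approx x^f$ by killing all $t_i$ is the same, and your Claim is indeed equivalent (by the substitution $t_i\mapsto 1$ for $e_i=0$) to the assertion that $\mathbf V\{\eqref{one letter in a block}\}$ satisfies $\delta_{e-e_0,e_0}$. But that assertion \emph{is} the lemma -- everything else is routine -- and you have not proved it. You explicitly defer ``the real difficulty'' to an unspecified induction, so the proposal has a genuine gap precisely where the content lies.

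Moreover, the strategy you sketch for closing it is not workable as stated. You propose to extract ``one-$x$ shift'' identities $x^qtx\approx x^{q+1}t$ from the one-simple-letter specializations $x^{e_0}t_1x^{e-e_0}\approx x^{f_0}t_1x^{f-f_0}$. These specializations can be trivial or uninformative: in the extreme case $e_i=f_i=1$ for all $i\ge1$ (so $e-e_0=f-f_0=r$), every such specialization reduces modulo $x^2$-centrality to an identity whose two sides have identical trailing $x$-powers, and no shift identity comes out; and for small $q$ the identity $x^qtx\approx x^{q+1}t$ is simply false in the relevant varieties (for $q=0$ it is $tx\approx xt$), so it cannot be a legitimate intermediate step. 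The paper resolves exactly this by splitting into two cases. In Case 1 (all $e_i=f_i=1$), non-triviality forces $e_0\ne f_0$, and one uses \eqref{one letter in a block} itself to pump the leading exponent up by multiples of $g=|e_0-f_0|$ until it exceeds the aperiodicity threshold $s$ (where $x^s\approx x^{s+1}$ holds), after which the scattered single $x$'s can be absorbed into the head and the exponent brought back down with a second application of \eqref{one letter in a block}. In Case 2 ($(e_j,f_j)\ne(1,1)$ for some $j$), the argument is an induction on $r$ in which one deletes a carefully chosen simple letter $t_\ell$ so that the resulting shorter identity is still non-trivial and still of the admissible form (possibly after using $x^2y\approx yx^2$ to push a newly created $x^2$ into the head when $(e_j,f_j)=(0,1)$ and $f_{j+1}=1$). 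Your proposal would need to supply an argument of comparable precision; without it, the lemma is not established.
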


\begin{proof}
The identity~\eqref{one letter in a block} follows from $\{x^e\approx x^f,\,\delta_{e-e_0,e_0},\,\delta_{f-f_0,f_0}\}$ because
$$
x^{e_0}\biggl(\prod_{i=1}^r t_ix^{e_i}\biggr) \stackrel{\delta_{e-e_0,e_0}}\approx x^e\biggl(\prod_{i=1}^r t_i\biggr)\stackrel{x^e\approx x^f}\approx x^f\biggl(\prod_{i=1}^r t_i\biggr)\stackrel{\delta_{f-f_0,f_0}}\approx x^{f_0}\biggl(\prod_{i=1}^r t_ix^{f_i}\biggr).
$$
Evidently, $x^e\approx x^f$ is a consequence of~\eqref{one letter in a block}.
So, it remains to show that $\mathbf V\{\eqref{one letter in a block}\}$ satisfies $\delta_{e-e_0,e_0}$ and $\delta_{f-f_0,f_0}$.
Two cases are possible.

\medskip

\textit{Case }1: $e_i=f_i=1$ for any $i=1,2,\dots,r$.
Then $r=e-e_0=f-f_0$.
Since the identity~\eqref{one letter in a block} is non-trivial, by symmetry, we may assume that $e_0=f_0+g$ for some $g\in\mathbb N$.
In view of Lemma~\ref{L: subvariety of A_cen}, $\mathbf V$ satisfies the identity $x^s \approx x^{s+1}$ for some $s\in\mathbb N$.
Then the identities
$$
x^{e_0}t_1xt_2x\cdots t_rx \stackrel{\eqref{one letter in a block}}\approx x^{e_0+sg}t_1xt_2x\cdots t_rx \stackrel{\{x^s \approx x^{s+1},\,x^2y\approx yx^2\}}\approx x^{e_0+r+sg} t_1t_2\cdots t_r \stackrel{\eqref{one letter in a block}}\approx x^{e_0+r} t_1t_2\cdots t_r
$$
and so $\delta_{r,e_0}$ are satisfied by $\mathbf V\{\eqref{one letter in a block}\}$.
It remains to notice that $\delta_{r,f_0}$ is consequence of $\delta_{r,e_0}$.

\medskip

\textit{Case }2: $(e_j,f_j)\ne (1,1)$ for some $j\in\{1,2,\dots,r\}$.
We use induction on $r$.

\smallskip

\textit{Induction base}: $r=1$. 
We may assume without any loss that~\eqref{one letter in a block} coincides with either $x^{e_0}t_1\approx x^{f_0}t_1$ or $x^{e_0}t_1\approx x^{f_0}t_1x$.
If~\eqref{one letter in a block} coincides with $x^{e_0}t_1\approx x^{f_0}t_1$, then $e=e_0$ and $f=f_0$.
In this case, the identities $\delta_{e-e_0,e_0}$ and $\delta_{f-f_0,f_0}$ are trivial and so follow from~\eqref{one letter in a block}.
If~\eqref{one letter in a block} is equal to $x^{e_0}t_1\approx x^{f_0}t_1x$, then $\delta_{e-e_0,e_0}$ is trivial, while $\delta_{f-f_0,f_0}$ follows from~\eqref{one letter in a block} because 
$$
x^{f_0}t_1x \stackrel{\eqref{one letter in a block}}\approx x^{e_0}t_1 = x^et_1 \stackrel{x^e\approx x^f}\approx x^ft_1.
$$
We see that~\eqref{one letter in a block} implies $\delta_{e-e_0,e_0}$ and $\delta_{f-f_0,f_0}$ in either case.

\smallskip

\textit{Induction step}: $r>1$.
For any $\ell=1,2,\dots,r$, we put
$$
\mathbf p_\ell =\biggl(\prod_{i=1}^r t_ix^{e_i}\biggr)_{t_\ell}\ \text { and }\  \mathbf q_\ell =\biggl(\prod_{i=1}^r t_ix^{e_i}\biggr)_{t_\ell}.
$$

Suppose that $e_k+e_{k+1},f_k+f_{k+1}\le 1$ for some $k\in\{1,2,\dots,r-1\}$.
Consider the identity $x^{e_0}\mathbf p_{k+1} \approx x^{f_0}\mathbf q_{k+1}$.
If $e_i=f_i=e_k+e_{k+1}=f_k+f_{k+1}=1$ for any $i=1,2,\dots, k-1,k+2,\dots,r$, then $\mathbf V\{x^{e_0}\mathbf p_{k+1} \approx x^{f_0}\mathbf q_{k+1}\}$ satisfies the identities $\delta_{e-e_0-1,e_0}$ and $\delta_{f-f_0-1,f_0}$ by Case~1 and so the identities $\delta_{e-e_0,e_0}$ and $\delta_{f-f_0,f_0}$.
If either $(e_k+e_{k+1},f_k+f_{k+1})\ne(1,1)$ or $(e_q,f_q)\ne(1,1)$ for some $q\in\{1,2,\dots, k-1,k+2,\dots,r\}$, then $\delta_{e-e_0,e_0}$ and $\delta_{f-f_0,f_0}$ hold in $\mathbf V\{x^{e_0}\mathbf p_{k+1} \approx x^{f_0}\mathbf q_{k+1}\}$ by the induction assumption.
Since $x^{e_0}\mathbf p_{k+1} \approx x^{f_0}\mathbf q_{k+1}$ is consequence of~\eqref{one letter in a block}, the identities $\delta_{e-e_0,e_0}$ and $\delta_{f-f_0,f_0}$ are satisfied by $\mathbf V\{\eqref{one letter in a block}\}$.
So, we may further assume that $e_i+e_{i+1}>1$ or $f_i+f_{i+1}>1$ for any $i=1,2,\dots,r-1$.
In particular, $(e_i,f_i)\ne(0,0)$ for any $i=1,2,\dots,r$.

Then $(e_j,f_j)\in\{(0,1),(1,0)\}$. 
We may assume without any loss that $(e_j,f_j)=(0,1)$ and $j\ne r$. 
Then $f_{j+1}=1$.
Consider the identity $x^{e_0}\mathbf p_{j+1} \approx x^{f_0}\mathbf q_{j+1}$.
Clearly, $x^2y\approx yx^2$ implies $x^{f_0}\mathbf q_{j+1} \approx x^{f_0+2}\mathbf q_{j+1}^\prime$, where
$$
\mathbf q_{j+1}^\prime=\biggl(\prod_{i=1}^{j-1} t_ix^{f_i}\biggr)\cdot t_j\cdot\biggl(\prod_{i=j+2}^r t_ix^{f_i}\biggr).
$$
By the induction assumption, $\mathbf V\{x^{e_0}\mathbf p_{j+1} \approx x^{f_0+2}\mathbf q_{j+1}^\prime\}$ and so $\mathbf V\{\eqref{one letter in a block}\}$ satisfy $\delta_{e-e_0,e_0}$ and $\delta_{f-(f_0+2),f_0+2}$.
Then $\delta_{f-f_0,f_0}$ holds in $\mathbf V\{\eqref{one letter in a block}\}$ because 
$$
x^{f_0}\biggl(\prod_{i=1}^r t_ix^{f_i}\biggr) \stackrel{\eqref{one letter in a block}}\approx x^{e_0}\biggl(\prod_{i=1}^r t_ix^{e_i}\biggr) \stackrel{\delta_{e-e_0,e_0}}\approx x^e\biggl(\prod_{i=1}^r t_i\biggr) \stackrel{x^e\approx x^f}\approx x^f\biggl(\prod_{i=1}^r t_i\biggr).
$$

Lemma~\ref{L: reduction to delta_{r,m}} is proved.
\end{proof}

\begin{corollary}
\label{C: delta_{n,m} in X wedge Y}
Let $\mathbf X$ and $\mathbf Y$ be aperiodic monoid subvarieties of $\mathbf A$. 
If the variety $\mathbf X\wedge\mathbf Y$ satisfies the identity $\delta_{n,m}$ for some $n,m\in\mathbb N_0$, then this identity holds in either $\mathbf X$ or $\mathbf Y$.
\end{corollary}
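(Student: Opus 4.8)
The plan is to reduce to the non-commutative case and then show that, in $\mathbf X\wedge\mathbf Y$, the fully invariant class of the left side $\mathbf w$ of $\delta_{n,m}$ cannot reach outside its $\FIC(\mathbf A)$-class unless it already does so in one of $\mathbf X$, $\mathbf Y$ — and that this forces $\delta_{n,m}$ there.

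If $n=0$ the identity is trivial. If $\mathbf X$ or $\mathbf Y$ is commutative it satisfies $\delta_{n,m}$, since in a commutative monoid both sides of $\delta_{n,m}$ equal $x^{m+n}t_1t_2\cdots t_n$; so assume $n\ge1$ and both varieties non-commutative, whence $M(xy)\in\mathbf X$ and $M(xy)\in\mathbf Y$ by Lemma~\ref{L: does not contain M(xy)}, so that $M(xy)\in\mathbf X\wedge\mathbf Y$ as well. Write $\mathbf w=x^mt_1xt_2x\cdots t_nx$ and $\mathbf v=x^{m+n}t_1t_2\cdots t_n$ for the two sides of $\delta_{n,m}$, and let $C$ be the $\FIC(\mathbf A)$-class of $\mathbf w$. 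It is routine to check that, as $n\ge1$, the words $\mathbf w$ and $\mathbf v$ lie in distinct $\FIC(\mathbf A)$-classes — for instance, every word $\FIC(\mathbf A)$-equivalent to $\mathbf w$ has an occurrence of $x$ immediately after $t_1$, whereas $\mathbf v$ has none. In particular $\mathbf v\notin C$.

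The core step is the claim: \emph{if the $\FIC(\mathbf X)$-class of $\mathbf w$ properly contains $C$, then $\mathbf X$ satisfies $\delta_{n,m}$} (and symmetrically for $\mathbf Y$). If $m+n=1$, i.e. $m=0$ and $n=1$, then by Lemma~\ref{L: decompositions of u and v} the word $t_1x$ is an isoterm for $\mathbf X$, so the hypothesis of the claim is empty; thus assume $m+n\ge2$, so $x$ is multiple in $\mathbf w$. Choose a word $\mathbf w'\notin C$ with $\mathbf X\models\mathbf w\approx\mathbf w'$. By Lemma~\ref{L: decompositions of u and v}, $\mathbf w'=x^{g_0}t_1x^{g_1}\cdots t_nx^{g_n}$ for some $g_i\in\mathbb N_0$, so $\mathbf w\approx\mathbf w'$ is of the form~\eqref{one letter in a block} with $(e_0,e_1,\dots,e_n)=(m,1,\dots,1)$. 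By Lemma~\ref{L: one letter in block reduction} this identity is equivalent within $\mathbf A$, and hence within $\mathbf X$ (as $\mathbf X\subseteq\mathbf A$), to an identity of the same form whose non-initial exponents are all $\le1$; since the reduction of that lemma moves only even amounts into the initial block, it leaves the left side equal to $\mathbf w$ and replaces $\mathbf w'$ by a word $\hat{\mathbf w}'$ which differs from $\mathbf w$ because $\mathbf w'\notin C$. Now $\mathbf X$ satisfies the non-trivial identity $\mathbf w\approx\hat{\mathbf w}'$, and Lemma~\ref{L: reduction to delta_{r,m}}, applied with $e_0=m$ and $e_1=\dots=e_n=1$ (so that $e=m+n$), yields $\mathbf X\models\delta_{e-e_0,e_0}=\delta_{n,m}$. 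This proves the claim.

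To conclude, suppose neither $\mathbf X$ nor $\mathbf Y$ satisfies $\delta_{n,m}$. By the claim, $C$ is a full $\FIC(\mathbf X)$-class and a full $\FIC(\mathbf Y)$-class, hence is closed under $\FIC(\mathbf X)\cup\FIC(\mathbf Y)$ and so under the fully invariant congruence $\FIC(\mathbf X\wedge\mathbf Y)=\FIC(\mathbf X)\vee\FIC(\mathbf Y)$ generated by it. Since $\mathbf w\in C$ and $\mathbf v\notin C$, the variety $\mathbf X\wedge\mathbf Y$ does not satisfy $\mathbf w\approx\mathbf v$, i.e. does not satisfy $\delta_{n,m}$, contradicting the hypothesis. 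The one delicate point I anticipate is the exponent bookkeeping in applying Lemmas~\ref{L: one letter in block reduction} and~\ref{L: reduction to delta_{r,m}}: one must verify that the reduction keeps the left side $\mathbf w$ intact, which is precisely what makes the identity produced by Lemma~\ref{L: reduction to delta_{r,m}} be $\delta_{n,m}$ itself rather than some other $\delta_{r,c}$.
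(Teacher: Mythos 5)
Your proof is correct and rests on the same machinery as the paper's: reduce to the non-commutative case via Lemma~\ref{L: does not contain M(xy)}, split the derivation of $\delta_{n,m}$ in $\mathbf X\wedge\mathbf Y$ into steps holding in $\mathbf X$ or $\mathbf Y$ (your join of fully invariant congruences is just Proposition~\ref{P: deduction} rephrased), pin down the shape of the intermediate word with Lemma~\ref{L: decompositions of u and v}, and feed the resulting identity of the form~\eqref{one letter in a block} into Lemmas~\ref{L: one letter in block reduction} and~\ref{L: reduction to delta_{r,m}}. The one genuine difference is that you extract the first step of the chain that leaves the $\FIC(\mathbf A)$-class $C$ of the left-hand side, rather than simply the first step of the chain; this guarantees that the identity handed to Lemma~\ref{L: reduction to delta_{r,m}} is still non-trivial after the exponent reduction of Lemma~\ref{L: one letter in block reduction}, a point the paper's own proof glosses over (its first chain step could a priori stay inside $C$ and reduce to a trivial identity). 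So your version is, if anything, slightly more careful than the original; the only soft spot is the ``routine'' verification that $\mathbf v\notin C$, which deserves the one-line observation that in these words any substituted square must be a power of $x$, so applications of $x^2y\approx yx^2$ preserve the parity of the number of occurrences of $x$ in each block.
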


\begin{proof}
If $M(xy)\notin\mathbf X$, then $\mathbf X$ is commutative by Lemma~\ref{L: does not contain M(xy)}. 
Then $\mathbf X$ satisfies the identity $\delta_{n,m}$ because it is a consequence of the commutative law.
By a similar argument we can show that if $M(xy)\notin\mathbf Y$, then $\delta_{n,m}$ holds in $\mathbf Y$.
So, we may further assume that $M(xy)\in\mathbf X\wedge\mathbf Y$.

It follows from Proposition~\ref{P: deduction} that there is a sequence of pairwise distinct words $\mathbf w_1,\mathbf w_2,\dots,\mathbf w_\ell$ such that $\mathbf w_1=x^mt_1xt_2x\cdots t_nx$, $\mathbf w_\ell=x^{m+n}t_1t_2\cdots t_n$ and $\mathbf w_i\approx \mathbf w_{i+1}$ holds in either $\mathbf X$ or $\mathbf Y$ for any $i=1,2,\dots,\ell-1$. 
By symmetry, we may assume that $\mathbf w_1\approx \mathbf w_2$ is satisfied by $\mathbf X$.
Since $M(xy)\in \mathbf X$, Lemma~\ref{L: decompositions of u and v} implies that $\mathbf w_2=x^{e_0}\prod_{i=1}^n(t_ix^{e_i})$ for some $e_0,e_1,\dots,e_n\in\mathbb N_0$.
In view of Lemma~\ref{L: one letter in block reduction}, we may assume that $e_1,e_2,\dots,e_n\le 1$.
Now Lemma~\ref{L: reduction to delta_{r,m}} applies and we conclude that $\mathbf X$ satisfies $\delta_{n,m}$.
\end{proof}

\subsection{Identities formed by words with two multiple letters}

\begin{lemma}
\label{L: pxyq=pyxq in X wedge Y}
Let $\mathbf X$ and $\mathbf Y$ be aperiodic monoid subvarieties of $\mathbf A$. 
If $\mathbf X\wedge\mathbf Y$ satisfies the identity
\begin{equation}
\label{pxyq=pyxq}
\mathbf p\,xy\,\mathbf q\approx\mathbf p\,yx\,\mathbf q,
\end{equation}
where
\begin{equation}
\label{pxyq=pyxq restrict}
\begin{array}{l}
\mathbf p=a_1t_1\cdots a_kt_k\ \text{ and }\ \mathbf q=t_{k+1}a_{k+1}\cdots t_{k+\ell}a_{k+\ell}\ \text{ for some }\ k,\ell\in\mathbb N_0\\
\text{and}\ a_1,a_2,\dots,a_{k+\ell}\ \text{ are letters such that }\ \{a_1,a_2,\dots,a_{k+\ell}\}=\{x,y\},
\end{array}
\end{equation}
then this identity is true in either $\mathbf X$ or $\mathbf Y$.
\end{lemma}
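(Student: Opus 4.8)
The plan is to mimic the proof of Corollary~\ref{C: delta_{n,m} in X wedge Y}: reduce to the case where $M(xy)$ lies in both $\mathbf X$ and $\mathbf Y$, take a deduction of~\eqref{pxyq=pyxq} in $\mathbf X\wedge\mathbf Y$, and show that already a single step of it forces~\eqref{pxyq=pyxq} in $\mathbf X$ or in $\mathbf Y$.

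If $M(xy)\notin\mathbf X$ or $M(xy)\notin\mathbf Y$, then by Lemma~\ref{L: does not contain M(xy)} the corresponding variety is commutative and hence satisfies~\eqref{pxyq=pyxq} (a consequence of $xy\approx yx$); so we may assume $M(xy)\in\mathbf X\wedge\mathbf Y$. By Proposition~\ref{P: deduction} there is a sequence of pairwise distinct words $\mathbf w_1=\mathbf p\,xy\,\mathbf q,\mathbf w_2,\dots,\mathbf w_\ell=\mathbf p\,yx\,\mathbf q$ with each identity $\mathbf w_i\approx\mathbf w_{i+1}$ true in $\mathbf X$ or in $\mathbf Y$, and hence — since both contain $M(xy)$ — in $\mathbf M(xy)$. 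So Lemma~\ref{L: decompositions of u and v}, applied along the sequence, shows that every $\mathbf w_i$ has content $\{x,y,t_1,\dots,t_{k+\ell}\}$ and a decomposition $\mathbf w_i^{(0)}t_1\mathbf w_i^{(1)}\cdots t_{k+\ell}\mathbf w_i^{(k+\ell)}$ all of whose blocks are words over $\{x,y\}$; by~\eqref{pxyq=pyxq restrict} the blocks of $\mathbf w_1$ and of $\mathbf w_\ell$ are single letters, except the one lying between $t_k$ and $t_{k+1}$, which is $xy$ in $\mathbf w_1$ and $yx$ in $\mathbf w_\ell$.

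By symmetry assume $\mathbf w_1\approx\mathbf w_2$ holds in $\mathbf X$; if $\mathbf w_2=\mathbf p\,yx\,\mathbf q$ we are done, so suppose not. Deleting all occurrences of $y$ (respectively of $x$) from $\mathbf w_1\approx\mathbf w_2$ yields an identity of $\mathbf X$ between words having the single multiple letter $x$ (respectively $y$). If both were trivial, then $\mathbf w_2$ would contain the same numbers of occurrences of $x$ and of $y$ in each block as $\mathbf w_1$, forcing $\mathbf w_2\in\{\mathbf w_1,\mathbf p\,yx\,\mathbf q\}$ — a contradiction. Hence one of the two identities, say the first, is non-trivial. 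Writing $(\mathbf w_1)_y=x^{\varepsilon_0}t_1x^{\varepsilon_1}\cdots t_{k+\ell}x^{\varepsilon_{k+\ell}}$ with every $\varepsilon_j\in\{0,1\}$ and $e:=\sum_j\varepsilon_j=\occ_x(\mathbf w_1)\ge 2$, and noting that the other side has the same simple skeleton (Lemma~\ref{L: decompositions of u and v}), Lemmas~\ref{L: one letter in block reduction} and~\ref{L: reduction to delta_{r,m}} show that $\mathbf X$ satisfies $\delta_{e-\varepsilon_0,\varepsilon_0}$, i.e.\ $\delta_{e,0}$ or $\delta_{e-1,1}$.

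The remaining step — the only genuine computation, and thus the main obstacle — is to verify that each of $\delta_{e,0}$ and $\delta_{e-1,1}$ (with $e=\occ_x(\mathbf p\,xy\,\mathbf q)\ge2$), together with $x^2y\approx yx^2$, implies $\mathbf p\,xy\,\mathbf q\approx\mathbf p\,yx\,\mathbf q$ in $\mathbf X$. Suitably substituted, $\delta_{n,m}$ lets one collect all $e$ occurrences of $x$ of the word — one of them possibly being its first letter — into a single block $x^e$ placed at the front, every other letter keeping its position; carrying this out on $\mathbf p\,xy\,\mathbf q$ and on $\mathbf p\,yx\,\mathbf q$ produces the same word, since the two differ only in whether the middle occurrence of $x$ stands just before or just after the middle occurrence of $y$ — a difference that vanishes once that $x$ has been moved to the front. (When $\mathbf p$ is empty the two words begin with different letters and one additionally needs that the power $x^e$ is central in $\mathbf X$: for even $e$ this is immediate from $x^2y\approx yx^2$, and for odd $e\ge 3$ it follows because $\delta_{e-1,1}$ implies $\delta_{e-2,2}$.) If instead it is the deletion of the $x$'s that gives the non-trivial identity, one argues in the same way with $x$ and $y$ interchanged. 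In all cases $\mathbf X$ — or, symmetrically, $\mathbf Y$ — satisfies~\eqref{pxyq=pyxq}, as desired.
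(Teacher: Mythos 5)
Your proof is correct and follows essentially the same route as the paper's: reduce to $M(xy)\in\mathbf X\wedge\mathbf Y$, and whenever one of the one-multiple-letter projections of the identity is non-trivial, pass through Lemmas~\ref{L: one letter in block reduction} and~\ref{L: reduction to delta_{r,m}} to an identity $\delta_{k,\ell}$ which, together with $x^2y\approx yx^2$, collects all occurrences of $x$ (or of $y$) to the front and thereby yields~\eqref{pxyq=pyxq}. The only (harmless) divergence is in the remaining case: where the paper invokes Lemma~\ref{L: xt_1x...t_kx is isoterm} to see that a non-trivial linear-balanced consequence of $\mathbf p\,xy\,\mathbf q$ must be $\mathbf p\,yx\,\mathbf q$, you observe directly that a deduction step whose $x$-deletion and $y$-deletion are both trivial can only produce $\mathbf p\,xy\,\mathbf q$ or $\mathbf p\,yx\,\mathbf q$, which bypasses that lemma.
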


\begin{proof}
If $M(xy)\notin \mathbf X$, then $\mathbf X$ is commutative by Lemma~\ref{L: does not contain M(xy)} and, therefore, satisfies~\eqref{pxyq=pyxq}.
By a similar argument we can show that $M(xy)\notin \mathbf Y$, then $\mathbf Y$ satisfies~\eqref{pxyq=pyxq}.
So, we may further assume that $M(xy)\in\mathbf X\wedge\mathbf Y$.

Put $\mathbf u=\mathbf p\,xy\,\mathbf q$. 
We note that $\simple(\mathbf u)=\{t_1,t_2,\dots,t_{k+\ell}\}$. 
If the word $\mathbf u_y$ is not an isoterm for $\mathbf X$, then $\mathbf X$ satisfies a non-trivial identity of the form $\mathbf u_y\approx\mathbf u^\prime$. 
Lemma~\ref{L: decompositions of u and v} implies that $\mathbf u^\prime=x^{f_0}\bigl(\prod_{i=1}^{k+\ell} t_ix^{f_i}\bigr)$ for some $f_0,f_1,\dots,f_{k+\ell}\in\mathbb N_0$.
In view of Lemma~\ref{L: one letter in block reduction}, we may assume that $f_1,f_2,\dots,f_{k+\ell}\le 1$.
Then we can apply Lemma~\ref{L: reduction to delta_{r,m}} with the conclusion that $\mathbf X$ satisfies the identity $\delta_{\occ_x(\mathbf u)-1,1}$. 
This identity together with $x^2y\approx yx^2$ implies $\mathbf u\approx x^{\occ_x(\mathbf u)}\,\mathbf u_x\approx\mathbf p\,yx\,\mathbf q$, and we are done. 
Analogous considerations show that if the word $\mathbf u_x$ is not an isoterm for $\mathbf X$ or $\mathbf Y$ or the word $\mathbf u_y$ is not an isoterm for $\mathbf Y$, then the identity~\eqref{pxyq=pyxq} is true in either $\mathbf X$ or $\mathbf Y$. 

So, it remains to consider the case both the words $\mathbf u_x$ and $\mathbf u_y$ are isoterms for $\mathbf X$ and $\mathbf Y$. 
Proposition~\ref{P: deduction} implies that one of the varieties $\mathbf X$ or $\mathbf Y$, say $\mathbf X$, satisfies a non-trivial identity of the form $\mathbf u\approx\mathbf v$. 
In view of Lemma~\ref{L: xt_1x...t_kx is isoterm}, the identity $\mathbf u\approx\mathbf v$ is linear-balanced. 
This means that $\simple(\mathbf v)=\{t_1,t_2,\dots,t_{k+\ell}\}$ and blocks of the word $\mathbf v$ (in order of their appearance from left to right) are $a_1$, $a_2$, \dots, $a_k$, $\mathbf w$, $a_{k+1}$, \dots, $a_{k+\ell}$, where $\mathbf w\in\{xy,yx\}$. 
Since the identity $\mathbf u\approx\mathbf v$ is non-trivial, $\mathbf w=yx$, whence $\mathbf v=\mathbf p\,yx\,\mathbf q$.
\end{proof}

\begin{lemma}
\label{L: xy.. = yx.. in X wedge Y}
Let $\mathbf X$ and $\mathbf Y$ be aperiodic monoid subvarieties of $\mathbf A\{\alpha,\,\beta\}$. 
If $\mathbf X\wedge\mathbf Y$ satisfies the identity
\begin{equation}
\label{two letters in a block}
x^{e_0}y^{f_0}\biggl(\prod_{i=1}^r t_ix^{e_i}y^{f_i}\biggr)\approx y^{f_0}x^{e_0}\biggl(\prod_{i=1}^r t_ix^{e_i}y^{f_i}\biggr),
\end{equation}
with $r\in\mathbb N_0$, $e_0,f_0\in\mathbb N$, $e_1,f_1,\dots,e_r,f_r\in\mathbb N_0$, $\sum_{i=0}^r e_i\ge 2$ and $\sum_{i=0}^r f_i\ge 2$, then this identity is true in either $\mathbf X$ or $\mathbf Y$.
\end{lemma}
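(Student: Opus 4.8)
The plan is to imitate the proof of Lemma~\ref{L: pxyq=pyxq in X wedge Y}, feeding in the reduction machinery of Lemmas~\ref{L: decompositions of u and v}, \ref{L: one letter in block reduction} and~\ref{L: reduction to delta_{r,m}}. Write $\mathbf u$ and $\mathbf v$ for the left- and right-hand sides of~\eqref{two letters in a block}, and observe that $\mathbf u_x=\mathbf v_x=y^{f_0}\prod_{i=1}^r t_iy^{f_i}$ and $\mathbf u_y=\mathbf v_y=x^{e_0}\prod_{i=1}^r t_ix^{e_i}$ each involve only one multiple letter. If $M(xy)\notin\mathbf X$, then $\mathbf X$ is commutative by Lemma~\ref{L: does not contain M(xy)} and so satisfies~\eqref{two letters in a block}; likewise for $\mathbf Y$; so I may assume $M(xy)\in\mathbf X\wedge\mathbf Y$. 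Since $x^2y\approx yx^2$ holds in $\mathbf X$ and $\mathbf Y$, even powers of a single letter are central there, and after moving such central factors to the front (replacing each $e_i,f_i$ by its residue modulo~$2$ for $i\ge1$) one reduces --- with a little care about the borderline situations, which are either trivial in $\mathbf A$ or fall under Lemma~\ref{L: pxyq=pyxq in X wedge Y} after one more application of $x^2y\approx yx^2$ --- to the case $e_0=f_0=1$ and $e_i,f_i\le1$ for all $i$. Then $\mathbf u=xy\mathbf w$ and $\mathbf v=yx\mathbf w$ with $\mathbf w=\prod_{i=1}^r t_ix^{e_i}y^{f_i}$, all blocks of $\mathbf u$ are linear, and $\occ_x(\mathbf u),\occ_y(\mathbf u)\ge2$.

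Next I would split into two cases according to whether $\mathbf u_x,\mathbf u_y$ stay isoterms. Suppose first that one of them, say $\mathbf u_y$, is not an isoterm for one of the varieties, say $\mathbf X$. Since $M(xy)\in\mathbf X$, Lemmas~\ref{L: decompositions of u and v}, \ref{L: one letter in block reduction} and~\ref{L: reduction to delta_{r,m}} (applied exactly as in the proof of Corollary~\ref{C: delta_{n,m} in X wedge Y}) show that $\mathbf X$ satisfies $\delta_{L,1}$, where $L=\occ_x(\mathbf u)-1\ge1$. Specialising $\delta_{L,1}$ gives $x^Lyx\approx x^{L+1}y$ in $\mathbf X$, and combining this with $x^2y\approx yx^2$ (so that $x^L$ is central when $L$ is even, and $x^{L+1}$ is central when $L$ is odd) yields $\mathbf X\models x^{L+1}y\approx yx^{L+1}$. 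On the other hand, writing $\mathbf u=x\mathbf m_0x\mathbf m_1\cdots x\mathbf m_L$ with the $\mathbf m_i$ the segments between consecutive occurrences of $x$ --- possible because $e_i\le1$ --- and substituting into $\delta_{L,1}$, one gets $\mathbf u\approx x^{L+1}\mathbf m_0\cdots\mathbf m_L=x^{L+1}\mathbf u_x$ in $\mathbf X$, and the same computation gives $\mathbf v\approx yx^{L+1}\mathbf w_x$, where $\mathbf u_x=y\mathbf w_x$. Hence $\mathbf u\approx x^{L+1}y\mathbf w_x\approx yx^{L+1}\mathbf w_x\approx\mathbf v$ in $\mathbf X$; the remaining three subcases follow by the symmetries $\mathbf X\leftrightarrow\mathbf Y$ and $x\leftrightarrow y$.

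Now suppose $\mathbf u_x$ and $\mathbf u_y$ are isoterms for both $\mathbf X$ and $\mathbf Y$. Since~\eqref{two letters in a block} is a non-trivial identity of $\mathbf X\wedge\mathbf Y$, the word $\mathbf u$ cannot be an isoterm for both varieties (otherwise, by Proposition~\ref{P: deduction}, any deduction of $\mathbf u\approx\mathbf v$ through identities of $\mathbf X$ and of $\mathbf Y$ would be constant); say $\mathbf X$ satisfies a non-trivial identity $\mathbf u\approx\mathbf v'$. As $\occ_x(\mathbf u)\ge2$ forces $e_j=1$ for some $j\ge1$, the isoterm $\mathbf u_y$ contains a factor $x\mathbf sx$ with $\mathbf s$ non-empty; a routine substitution argument then shows that $\bigl(\prod_{i=1}^{\occ_x(\mathbf u)-1}xt_i\bigr)x$ is an isoterm for $\mathbf X$, and symmetrically for $y$. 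Taking the more frequent of $x,y$ and applying Lemma~\ref{L: xt_1x...t_kx is isoterm} (all blocks of $\mathbf u$ are linear and every letter occurs at most $\max\{\occ_x(\mathbf u),\occ_y(\mathbf u)\}$ times) shows that $\mathbf u\approx\mathbf v'$ is linear-balanced; hence $\mathbf v'$ is obtained from $\mathbf u$ by reversing some of its two-letter blocks --- the initial block $xy$ and those blocks $x^{e_i}y^{f_i}$ with $e_i=f_i=1$. To finish I would show that $\mathbf v'$ cannot differ from $\mathbf u$ in an interior two-letter block without also differing in the initial block; then $\mathbf v'\in\{\mathbf u,\mathbf v\}$, and non-triviality forces $\mathbf v'=\mathbf v$, so $\mathbf X$ satisfies~\eqref{two letters in a block}.

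The reductions and the whole of Case~1 are routine and essentially parallel Corollary~\ref{C: delta_{n,m} in X wedge Y}. The crux is the last step of Case~2: this is the only place where the identities $\alpha$, $\beta$ and the hypotheses $\sum e_i\ge2$, $\sum f_i\ge2$ are genuinely used. The task is to understand how a single direct deduction by $\alpha$ or $\beta$ can act on a word all of whose blocks are linear, inside a variety in which $\mathbf u_x$ and $\mathbf u_y$ are isoterms, and to verify that reversing an interior occurrence of a two-letter block $xy$ inevitably propagates to the initial block. I expect this bookkeeping --- tracking which occurrences of $x$ and $y$ can serve as the variables of $\alpha$ and $\beta$ while $\mathbf u_x,\mathbf u_y$ remain isoterms --- to be the main difficulty, and it may be cleanest to organise it by induction on the number of two-letter blocks of $\mathbf u$.
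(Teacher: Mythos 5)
Your Case~1 and the general architecture match the paper's proof, but Case~2 --- which you yourself identify as the crux --- rests on a false claim. You propose to take \emph{any} non-trivial identity $\mathbf u\approx\mathbf v'$ of $\mathbf X$ and then show that ``$\mathbf v'$ cannot differ from $\mathbf u$ in an interior two-letter block without also differing in the initial block''. This is exactly backwards: since $\mathbf X\subseteq\mathbf A\{\alpha,\,\beta\}$, the identity $\alpha$ already lets $\mathbf X$ reverse any interior block $x^{e_i}y^{f_i}$ with $e_i=f_i=1$ \emph{without} touching the initial block $xy$, because both $x$ and $y$ occur before the first $t_i$ (for instance $xyt_1xy\approx xyt_1yx$ is an instance of $\alpha$). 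So whenever $\mathbf u$ has an interior block equal to $xy$, the word $\mathbf u$ fails to be an isoterm for $\mathbf X$ and for $\mathbf Y$ for reasons having nothing to do with \eqref{two letters in a block}, and the non-trivial consequence $\mathbf u\approx\mathbf v'$ you extract may well be one of these free interior flips; it then tells you nothing, and no amount of bookkeeping or induction on the number of two-letter blocks will recover the conclusion. The paper instead keeps the whole deduction chain $\mathbf u=\mathbf w_1,\dots,\mathbf w_\ell=\mathbf v$ supplied by Proposition~\ref{P: deduction}, locates the index $j$ at which the order of the \emph{first} occurrences of $x$ and $y$ flips, uses linear-balancedness (Lemma~\ref{L: xt_1x...t_kx is isoterm}) to see that $\mathbf w_j=xy\prod_{i=1}^r t_i\mathbf a_i$ and $\mathbf w_{j+1}=yx\prod_{i=1}^r t_i\mathbf b_i$ with $\mathbf a_i,\mathbf b_i\in\{x^{e_i}y^{f_i},y^{f_i}x^{e_i}\}$, and then turns the indeterminacy of the interior blocks into a non-issue: $\alpha$, holding in whichever of $\mathbf X,\mathbf Y$ satisfies $\mathbf w_j\approx\mathbf w_{j+1}$, normalises all the $\mathbf a_i,\mathbf b_i$ back to $x^{e_i}y^{f_i}$ and yields \eqref{two letters in a block} in that variety.

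A secondary point: your opening reduction ``replace each exponent by its residue mod $2$'' is not a legitimate reduction of the hypothesis. Within $\mathbf A$ the original identity becomes $x^{2A}y^{2B}\mathbf u'\approx x^{2A}y^{2B}\mathbf v'$ with $\mathbf u',\mathbf v'$ the reduced words, and you cannot cancel the central prefix to conclude that $\mathbf X\wedge\mathbf Y$ satisfies $\mathbf u'\approx\mathbf v'$. The paper avoids this by showing that if some $e_i>1$ or $f_i>1$ then \eqref{two letters in a block} is already a consequence of $\{x^2y\approx yx^2,\,\beta\}$ and hence holds in both $\mathbf X$ and $\mathbf Y$ outright, so that only the case $e_0=f_0=1$ and $e_i,f_i\le1$ requires an argument.
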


\begin{proof}
If $e_i>1$ or $f_i>1$ for some $i\in\{0,1,\dots,r\}$, then it is routine to check that~\eqref{two letters in a block} is a consequence of $\{x^2y\approx yx^2,\,\beta\}$.
So, we may further assume that $e_0=f_0=1$ and $e_1,f_1,e_2,f_2,\dots,e_r,f_r\le 1$.

If $M(xy)\notin \mathbf X$, then $\mathbf X$ is commutative by Lemma~\ref{L: does not contain M(xy)} and, therefore, satisfies~\eqref{two letters in a block}.
By a similar argument we can show that $M(xy)\notin \mathbf Y$, then $\mathbf Y$ satisfies~\eqref{two letters in a block}.
So, we may further assume that $M(xy)\in\mathbf X\wedge\mathbf Y$.
Let $\mathbf u$ denote the the left-hand side of the identity~\eqref{two letters in a block}.
We note that $\simple(\mathbf u)=\{t_1,t_2,\dots,t_r\}$. 
If the word $\mathbf u_y$ is not an isoterm for $\mathbf X$, then $\mathbf X$ satisfies a non-trivial identity of the form $\mathbf u_y\approx\mathbf u^\prime$. 
Lemma~\ref{L: decompositions of u and v} implies that $\mathbf u^\prime=x^{g_0}\bigl(\prod_{i=1}^r t_ix^{g_i}\bigr)$ for some $g_0,g_1,\dots,g_r\in\mathbb N_0$.
In view of Lemma~\ref{L: one letter in block reduction}, we may assume that $g_1,g_2,\dots,g_r\le 1$.
Then we can apply Lemma~\ref{L: reduction to delta_{r,m}} with the conclusion that $\mathbf X$ satisfies the identity $\delta_{e-1,1}$, where $e=\sum_{i=0}^r e_i$. 
This identity together with $\{x^2y\approx yx^2,\,\beta\}$ implies 
$$
\mathbf u\stackrel{\delta_{e-1,1}}\approx x^e\,\mathbf u_x=x^ey^{f_0}\biggl(\prod_{i=1}^r t_iy^{f_i}\biggr)\stackrel{\{x^2y\approx yx^2,\,\beta\}}\approx y^{f_0}x^e\biggl(\prod_{i=1}^r t_iy^{f_i}\biggr) \stackrel{\delta_{e-1,1}}\approx y^{f_0}x^{e_0}\biggl(\prod_{i=1}^r t_ix^{e_i}y^{f_i}\biggr).
$$ 
Analogous considerations show that if the word $\mathbf u_x$ is not an isoterm for $\mathbf X$ or $\mathbf Y$ or the word $\mathbf u_y$ is not an isoterm for $\mathbf Y$, then the identity~\eqref{two letters in a block} is true in either $\mathbf X$ or $\mathbf Y$. 

So, it remains to consider the case when both the words $\mathbf u_x$ and $\mathbf u_y$ are isoterms for $\mathbf X$ and $\mathbf Y$. 
Since~\eqref{two letters in a block} is satisfied by in $\mathbf X\wedge\mathbf Y$, Proposition~\ref{P: deduction} implies that there is a sequence of pairwise distinct words $\mathbf w_1,\mathbf w_2,\dots,\mathbf w_\ell$ such that $\mathbf w_1=\mathbf u$, $\mathbf w_\ell$ is the right-hand side of~\eqref{two letters in a block} and $\mathbf w_i\approx \mathbf w_{i+1}$ holds in either $\mathbf X$ or $\mathbf Y$ for any $i=1,2,\dots,\ell-1$. 
Then there exists $j\in\{1,2,\dots,\ell-1\}$ such that $({_{1\mathbf w_j}x})<({_{1\mathbf w_j}y})$ but $({_{1\mathbf w_{j+1}}y})<({_{1\mathbf w_{j+1}}x})$.
Since $\mathbf u_x$ and $\mathbf u_y$ are isoterms for $\mathbf X\wedge \mathbf Y$, Lemma~\ref{L: xt_1x...t_kx is isoterm} implies that
$$
\mathbf w_j=xy\,\biggl(\prod_{i=1}^r t_i\mathbf a_i\biggr)\ \text{ and } \ \mathbf w_{j+1}=yx\,\biggl(\prod_{i=1}^r t_i\mathbf b_i\biggr),
$$
where $\mathbf a_i,\mathbf b_i\in\{x^{e_i}y^{f_i},y^{f_i}x^{e_i}\}$ for any $i=1,2,\dots,r$.
We may assume without any loss that $\mathbf w_j\approx \mathbf w_{j+1}$ holds in $\mathbf X$.
Then $\mathbf X$ satisfies~\eqref{two letters in a block} because
$$
xy\,\biggl(\prod_{i=1}^r t_ix^{e_i}y^{f_i}\biggr)\stackrel{\alpha}\approx xy\,\biggl(\prod_{i=1}^r t_i\mathbf a_i\biggr) \approx yx\,\biggl(\prod_{i=1}^r t_i\mathbf b_i\biggr)\stackrel{\alpha}\approx  yx\,\biggl(\prod_{i=1}^r t_ix^{e_i}y^{f_i}\biggr).
$$

Lemma~\ref{L: xy.. = yx.. in X wedge Y} is proved.
\end{proof}

\subsection{Identities of the form $\mathbf a_{n,m}[\rho] \approx \mathbf a_{n,m}^\prime[\rho]$}

Let 
$$
\mathbf A^\prime =\mathbf A\{\mathbf a_{n,m}[\rho] \approx \mathbf a_{n,m}^\prime[\rho]\mid (n,m)\in\hat{\mathbb N}_0^2\ \text{ and }\ \rho\in S_{n,m}\}.
$$

\begin{lemma}[{Gusev and Vernikov~\cite[Lemma~3.8]{Gusev-Vernikov-21}}]
\label{L: from pxqxr to px^2qr}
Let $\mathbf V$ be a subvariety of $\mathbf A^\prime$. 
If $\mathbf w=\mathbf px\mathbf qx\mathbf r$ and $\con(\mathbf q)\subseteq\mul(\mathbf w)$, then $\mathbf V$ satisfies the identity $\mathbf w\approx\mathbf px^2\mathbf q\mathbf r$.\qed
\end{lemma}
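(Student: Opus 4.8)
The plan is to verify the identity directly in the whole variety $\mathbf A^\prime$: since every subvariety of $\mathbf A^\prime$ inherits its identities, it suffices to show that $\mathbf A^\prime$ satisfies $\mathbf w\approx\mathbf px^2\mathbf q\mathbf r$, and by Proposition~\ref{P: deduction} this amounts to deducing $\mathbf w\approx\mathbf px^2\mathbf q\mathbf r$ from $x^2y\approx yx^2$ together with the family $\mathbf a_{n,m}[\rho]\approx\mathbf a_{n,m}^\prime[\rho]$, where $(n,m)\in\hat{\mathbb N}_0^2$ and $\rho\in S_{n,m}$. I would induct on the length of the block $\mathbf q$. The statement concerns an arbitrary distinguished letter (written $x$ here), so the induction hypothesis is available whenever a word decomposes, for some letter $y$, as a prefix, then $y$, then a block strictly shorter than $\mathbf q$ all of whose letters are multiple in the word, then $y$ again, then a suffix. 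The base case $\mathbf q=1$ is immediate, since then $\mathbf w=\mathbf px^2\mathbf r=\mathbf px^2\mathbf q\mathbf r$.

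\emph{Reduction to a linear block.} If some letter $y$ occurs at least twice in $\mathbf q$, write $\mathbf q=\mathbf q_1y\mathbf q_2y\mathbf q_3$ with $y\notin\con(\mathbf q_1\mathbf q_2)$. The induction hypothesis, applied to the decomposition $(\mathbf px\mathbf q_1)\,y\,(\mathbf q_2)\,y\,(\mathbf q_3x\mathbf r)$, replaces $\mathbf w$ by $\mathbf px\mathbf q_1\,y^2\,\mathbf q_2\mathbf q_3x\mathbf r$; since $y^2$ is central (by $x^2y\approx yx^2$) I slide it rightward past $\mathbf q_2\mathbf q_3x$ to reach a word whose distinguished block is $\mathbf q_1\mathbf q_2\mathbf q_3$, which is shorter than $\mathbf q$ and still built from multiple letters, so the induction hypothesis gives $\mathbf px^2\mathbf q_1\mathbf q_2\mathbf q_3\,y^2\,\mathbf r$; sliding $y^2$ back in front of $\mathbf q_2$ and invoking the induction hypothesis in the opposite direction on $y^2\mathbf q_2$ restores $y\mathbf q_2y$ and yields $\mathbf px^2\mathbf q\mathbf r$. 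Thus in the inductive step we may assume $\mathbf q=c_1c_2\cdots c_s$ is linear, with pairwise distinct $c_i$, each occurring in $\mathbf p$ or in $\mathbf r$.

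\emph{The linear case.} Call $c_i$ \emph{left} if $c_i\in\con(\mathbf p)$, and \emph{right} otherwise (so then $c_i\in\con(\mathbf r)$). The crucial device is to pad $\mathbf q$ with ``empty'' variables --- inserting between any two consecutive block letters of the same type one formal symbol of the opposite type --- to obtain an alternating sequence of $n$ left-positions and $m$ right-positions; then $n+m\ge s$ and, by the alternation, $|n-m|\le1$, so $(n,m)\in\hat{\mathbb N}_0^2$. Choose one occurrence in $\mathbf p$ of each left letter and one in $\mathbf r$ of each right letter, list the left letters as $d_1,\dots,d_n$ in the order of their chosen occurrences in $\mathbf p$ (padding with empty words if there are fewer than $n$ of them) and the right letters as $d_{n+1},\dots,d_{n+m}$ likewise in $\mathbf r$, and put $\phi(x)=x$, $\phi(z_k)=d_k$, with $\phi(t_k)$ the piece of $\mathbf p$ (if $k\le n$) or of $\mathbf r$ (if $k>n$) lying between consecutive chosen occurrences, $\mathbf a$ the initial piece of $\mathbf p$ and $\mathbf b$ the final piece of $\mathbf r$. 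Finally define $\rho$ by sending the padded position occupied by $d_k$ to the index $k$. Since the padded sequence alternates in type, and the left (respectively right) positions receive precisely the indices $1,\dots,n$ (respectively $n+1,\dots,n+m$), consecutive positions go to indices on opposite sides of $n$, i.e. $\rho\in S_{n,m}$; and by construction $\mathbf a\,\phi(\mathbf a_{n,m}[\rho])\,\mathbf b=\mathbf w$ while $\mathbf a\,\phi(\mathbf a_{n,m}^\prime[\rho])\,\mathbf b=\mathbf px^2\mathbf q\mathbf r$, so $\mathbf w\approx\mathbf px^2\mathbf q\mathbf r$ is directly deducible from $\mathbf a_{n,m}[\rho]\approx\mathbf a_{n,m}^\prime[\rho]$.

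The main obstacle is the linear case, and specifically the fact that a run of consecutive block letters of the same type cannot be matched head-on against any $\mathbf a_{n,m}[\rho]$ with $(n,m)\in\hat{\mathbb N}_0^2$: those identities force the letters strictly between the two occurrences of $x$ to alternate between a letter whose other occurrence precedes the first $x$ and a letter whose other occurrence follows the second $x$. The empty-padding trick is exactly what bridges this gap, and the one point needing real care afterwards is verifying that the permutation so produced genuinely lies in $S_{n,m}$. By contrast the non-linear reduction, though a little delicate bookkeeping-wise, is the familiar ``pull out a central square and recurse'' manoeuvre.
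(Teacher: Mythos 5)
The paper does not prove this lemma at all---it is quoted from Gusev and Vernikov \cite[Lemma~3.8]{Gusev-Vernikov-21} with an immediate \textup{\qedsymbol}---so there is no in-paper argument to set yours against, and I can only assess your reconstruction on its own terms. Its architecture is sound: the induction on the length of the block, the ``extract a central square and recurse'' reduction to a linear block, and, above all, the device of padding a non-alternating run of same-type letters with variables that are then substituted by the empty word, so that one lands on a pair $(n,m)$ with $|n-m|\le 1$ and a genuine $(n,m)$-permutation. I checked that with your conventions the substitution instance of $\mathbf a_{n,m}[\rho]\approx\mathbf a_{n,m}^\prime[\rho]$ really is $\mathbf w\approx\mathbf p x^2\mathbf q\mathbf r$ (substitutions into $\mathfrak X^\ast$ may take the value $1$, so the padding is legitimate), and the padding is exactly the right way around the alternation constraint built into $S_{n,m}$.

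There is, however, one step that fails as written: the assertion that in the linear case every block letter $c_i$ ``occurs in $\mathbf p$ or in $\mathbf r$''. The hypothesis is only $c_i\in\mul(\mathbf w)$, and the other occurrence of $c_i$ may be one of the two distinguished occurrences of $x$; that is, $c_i=x$ is possible with $x\notin\con(\mathbf p\mathbf r)$ (for instance $\mathbf w=x\,zx\,x\,z$ with $\mathbf p=1$, $\mathbf q=zx$, $\mathbf r=z$). Your left/right dichotomy then has no occurrence in $\mathbf p$ or $\mathbf r$ to hang $c_i$ on, and the substitution cannot be assembled. The repair stays entirely inside your induction: after linearization $x$ occurs at most once in $\mathbf q$, say $\mathbf q=\mathbf q_1x\mathbf q_2$; apply the induction hypothesis first to $(\mathbf px\mathbf q_1)\,x\,\mathbf q_2\,x\,\mathbf r$ to reach $\mathbf px\mathbf q_1x^2\mathbf q_2\mathbf r$, and then to $\mathbf p\,x\,\mathbf q_1\,x\,(x\mathbf q_2\mathbf r)$ to reach $\mathbf px^2\mathbf q_1x\mathbf q_2\mathbf r=\mathbf px^2\mathbf q\mathbf r$; both blocks $\mathbf q_1$ and $\mathbf q_2$ are strictly shorter than $\mathbf q$. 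With this case added (and with the bookkeeping convention made explicit that the padding variables receive the otherwise unused indices, so that $\rho$ is an honest permutation sending left positions into $\{1,\dots,n\}$ and right positions into $\{n+1,\dots,n+m\}$), your proof is complete.
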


\begin{lemma}
\label{L: x^2 is a factor or occ_x > 2}
Let $\mathbf V$ be a subvariety of $\mathbf A$ satisfiying the identities~\eqref{xyzxy=yxzxy} and
\begin{equation}
\label{xyzxy=xyzyx}
xyzxy\approx xyzyx,
\end{equation}
$(n,m)\in\hat{\mathbb N}_0^2$ and $\rho\in S_{n,m}$.
Suppose that $\mathbf V$ satisfies a non-trivial identity $\mathbf a_{n,m}[\rho] \approx \mathbf a$ for some $\mathbf a\in\mathfrak X^\ast$ with $\mathbf a_x=\hat{\mathbf a}_{n,m}[\rho]$.
Suppose also that one of the following holds:
\begin{itemize}
\item[\textup{(i)}] $x^2$ is a factor of $\mathbf a$;
\item[\textup{(ii)}] $\occ_x(\mathbf a)>2$.
\end{itemize}
Then $\mathbf V$ satisfies the identity $\mathbf a_{n,m}[\rho] \approx \mathbf a_{n,m}^\prime[\rho]$.
\end{lemma}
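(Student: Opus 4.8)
I would argue as follows; write $\mathbf h=\hat{\mathbf a}_{n,m}[\rho]$ for brevity, so that $\mathbf a_x=(\mathbf a_{n,m}^\prime[\rho])_x=\mathbf h$, and recall that in $\mathbf h$ the letters $t_1,\dots,t_{n+m}$ are simple while every $z_i$ occurs exactly twice. If $M(xy)\notin\mathbf V$, then $\mathbf V$ is commutative by Lemma~\ref{L: does not contain M(xy)} and the identity $\mathbf a_{n,m}[\rho]\approx\mathbf a_{n,m}^\prime[\rho]$ holds trivially, its two sides being products of one and the same multiset of letters; so we may assume $M(xy)\in\mathbf V$. Then Lemma~\ref{L: decompositions of u and v}, applied to $\mathbf a_{n,m}[\rho]\approx\mathbf a$, together with the hypothesis $\mathbf a_x=\mathbf h$, yields $\simple(\mathbf a)=\{t_1,\dots,t_{n+m}\}$ and $\con(\mathbf a)=\con(\mathbf a_{n,m}[\rho])$, and shows that $\mathbf a$ arises from $\mathbf h$ by inserting occurrences of $x$ inside its blocks. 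Since $x^2$ slides freely through any word modulo $x^2y\approx yx^2$, one has $\mathbf a_{n,m}^\prime[\rho]\approx x^2\mathbf h$ in $\mathbf V$; as $\mathbf V$ satisfies $\mathbf a_{n,m}[\rho]\approx\mathbf a$, the claim reduces to showing that $\mathbf V$ satisfies $\mathbf a\approx x^2\mathbf h$.

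If $\occ_x(\mathbf a)=2$ then we are in case~(i), so $\mathbf a=\mathbf p\,x^2\,\mathbf q$ for some factorisation $\mathbf p\mathbf q=\mathbf h$, and sliding $x^2$ to the left gives $\mathbf a\approx x^2\mathbf p\mathbf q=x^2\mathbf h$ at once. The essential case is thus $\occ_x(\mathbf a)=N\ge3$ (which also covers the remaining part of case~(i)). Substituting $x\mapsto x$ and every other letter by the empty word in $\mathbf a_{n,m}[\rho]\approx\mathbf a$ shows that $\mathbf V$ satisfies $x^2\approx x^N$; since $\mathbf V$ is aperiodic, this strengthens to $x^2\approx x^3$, so that $x^k\approx x^2$ for every $k\ge2$.

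For $N\ge3$ I would proceed in two steps. \emph{Step 1 (creating an occurrence of $x^2$).} If $\mathbf a$ already contains $x^2$ as a factor there is nothing to do; otherwise all $N\ge3$ occurrences of $x$ in $\mathbf a$ are isolated, and here one uses the identities \eqref{xyzxy=yxzxy} and \eqref{xyzxy=xyzyx}. Since each $z_i$ occurs twice in $\mathbf h$, the word $\mathbf a$ contains a factor of repeated‑block shape $\mathbf u\mathbf v\mathbf w\mathbf u\mathbf v$ in which one of $\mathbf u,\mathbf v$ is an occurrence of $x$ and the other carries a doubled letter $z_i$; applying \eqref{xyzxy=yxzxy} or \eqref{xyzxy=xyzyx} to it (possibly after a few preliminary such moves bringing two occurrences of $x$ into a common block) brings two occurrences of $x$ next to each other. \emph{Step 2 (absorbing the remaining occurrences of $x$).} Once $\mathbf V$ satisfies $\mathbf a\approx\mathbf b$ with $\mathbf b_x=\mathbf h$ and $\mathbf b$ containing $x^2$, one peels this $x^2$ off, slides it (by $x^2y\approx yx^2$) up against another occurrence of $x$, and merges using $x^3\approx x^2$; each such round strictly decreases $\occ_x$ while keeping $\mathbf b_x=\mathbf h$ and an occurrence of $x^2$, so after finitely many rounds one reaches $\mathbf b^\ast$ with $\occ_x(\mathbf b^\ast)=2$, hence $\mathbf b^\ast=\mathbf p\,x^2\,\mathbf q$ with $\mathbf p\mathbf q=\mathbf h$, and then $\mathbf a\approx\mathbf b^\ast\approx x^2\mathbf h$, as required.

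The main obstacle is Step~1: one must verify, uniformly over $(n,m)\in\hat{\mathbb N}_0^2$, $\rho\in S_{n,m}$ and over the placement of the $N\ge3$ isolated occurrences of $x$, that a bounded sequence of applications of \eqref{xyzxy=yxzxy}, \eqref{xyzxy=xyzyx} and $x^2y\approx yx^2$ indeed produces an occurrence of $x^2$. This is a somewhat delicate but essentially combinatorial bookkeeping argument; its success is forced rather than accidental, because the target word $\mathbf a_{n,m}^\prime[\rho]$ itself contains $x^2$, so the $\FIC(\mathbf V)$-class of $\mathbf a$ must meet the set of words having $x^2$ as a factor.
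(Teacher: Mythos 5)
There are two genuine gaps here, one in each of your two steps. First, your passage from $x^2\approx x^N$ to $x^2\approx x^3$ invokes aperiodicity, but the lemma does not assume it: $\mathbf V$ is only required to lie in $\mathbf A\{\eqref{xyzxy=yxzxy},\,\eqref{xyzxy=xyzyx}\}$, a variety that contains every abelian group, and $x^2\approx x^4$ does not imply $x^2\approx x^3$ in, say, the two-element group. Since your Step~2 reduces $\occ_x$ one occurrence at a time via $x^3\approx x^2$, it collapses without this. (Note also that $x^2y\approx yx^2$ alone cannot collect $N\ge 3$ isolated occurrences of $x$ into a single power: it only transports $x^2$-blocks, so the parity of the number of occurrences of $x$ in each block is preserved.) The paper avoids the power identity altogether by deleting only the letters $z_i$ from $\mathbf a_{n,m}[\rho]\approx\mathbf a$, which yields an identity \eqref{remove all z_i} retaining the simple letters $t_i$; after the occurrences of $x$ are gathered \emph{within each block}, that identity converts the word directly into $\mathbf a_{n,m}^\prime[\rho]$. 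Your substitution killing everything but $x$ discards exactly the information needed for this. A similar remark applies to your use of Lemma~\ref{L: does not contain M(xy)}, which requires $\mathbf V\subseteq\mathbf A_\mathsf{cen}$ (though that invocation is dispensable, since $\mathbf a_x=\hat{\mathbf a}_{n,m}[\rho]$ already pins down $\mathbf a$ up to insertion of occurrences of $x$).

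Second, Step~1 is where essentially all of the work of part~(ii) lives, and it is not carried out. The asserted factor of shape $\mathbf u\mathbf v\mathbf w\mathbf u\mathbf v$ with $\{\mathbf u,\mathbf v\}=\{x,z_i\}$ need not exist: to apply \eqref{xyzxy=yxzxy} or \eqref{xyzxy=xyzyx} one needs \emph{both} occurrences of some $z_i$ to sit next to occurrences of $x$ in the correct pattern, and nothing forces this (for instance, all the excess occurrences of $x$ may be isolated inside the central block, separated only by letters $z_{i\rho}$). The paper's proof of~(ii) is a case analysis: the case $n+m=1$ is treated by hand; if some block equals $xz_jx$ one deletes letters to reduce to $\mathbf a_{0,1}[\varepsilon]\approx\mathbf a_{0,1}^\prime[\varepsilon]$; and in the remaining case one deletes a carefully chosen set $X$ of letters to obtain the identity \eqref{power of a}, whose left-hand side provably contains $x^2$ as a factor, reducing everything to part~(i). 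Finally, your closing justification --- that success is ``forced'' because $\mathbf a_{n,m}^\prime[\rho]$ contains $x^2$ and hence the $\FIC(\mathbf V)$-class of $\mathbf a$ meets the words containing $x^2$ --- is circular: the hypothesis only places $\mathbf a_{n,m}[\rho]$ (which has no factor $x^2$) in that class, and whether $\mathbf a_{n,m}^\prime[\rho]$ also lies in it is precisely the conclusion to be proved.
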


\begin{proof}
Clearly,
$$
\mathbf a = \biggl(\prod_{i=1}^n x^{e_i}z_ix^{f_i}t_i\biggr)x^{g_0}\biggl(\prod_{i=1}^{n+m} z_{i\rho}x^{g_i}\biggr)\biggl(\prod_{i=n+1}^{n+m} t_ix^{e_i}z_ix^{f_i}\biggr)
$$
for some $g_0,e_1,f_1,g_1,\dots,e_{n+m},f_{n+m},g_{n+m}\in\mathbb N_0$.
Then $\mathbf V$ satisfies
\begin{equation}
\label{remove all z_i}
\biggl(\prod_{i=1}^n t_i\biggr)x^2\biggl(\prod_{i=n+1}^{n+m} t_i\biggr)\approx \biggl(\prod_{i=1}^n x^{e_i+f_i}t_i\biggr)x^{\sum_{i=0}^{n+m}{g_i}}\biggl(\prod_{i=n+1}^{n+m} t_ix^{e_i+f_i}\biggr).
\end{equation}

(i) If $x^2$ is a factor of $\mathbf a$, then $\mathbf V$ satisfies the identities
$$
\mathbf a\stackrel{\{x^2y\approx yx^2,\,\eqref{xyzxy=yxzxy},\,\eqref{xyzxy=xyzyx}\}}\approx \biggl(\prod_{i=1}^n x^{e_i+f_i}z_it_i\biggr)x^{\sum_{i=0}^{n+m}g_i}\biggl(\prod_{i=1}^{n+m} z_{i\rho}\biggr)\biggl(\prod_{i=n+1}^{n+m} t_ix^{e_i+f_i}z_i\biggr)\stackrel{\eqref{remove all z_i}}\approx \mathbf a_{n,m}^\prime[\rho],
$$ 
and we are done. 

\smallskip

(ii) In view of Part~(i), we may further assume that $x^2$ is not a factor of $\mathbf a$. 

First, we consider the case when $n+m=1$.
Let $\varepsilon$ denote the trivial permutation from $S_1$.
If $n=1$ and $m=0$, then $\mathbf a_{n,m}[\rho]\approx\mathbf a$ is nothing but $\mathbf a_{1,0}[\varepsilon]=z_1t_1xz_1x \approx x^{e_1}z_1x^{f_1}t_1x^{g_0}z_1x^{g_1}$.
Since $x^2$ is not a factor of $\mathbf a$ and $\occ_x(\mathbf a)>2$, at least three of the numbers $e_1$, $f_1$, $g_0$, $g_1$ are equal to~$1$.
Then $\mathbf V$ satisfies
$$
\mathbf a_{1,0}[\varepsilon]\approx \mathbf a= x^{e_1}z_1x^{f_1}t_1x^{g_0}z_1x^{g_1} \stackrel{\{\eqref{xyzxy=yxzxy},\,\eqref{xyzxy=xyzyx}\}}\approx x^{e_1+f_1}z_1t_1x^{g_0+g_1}z_1 \stackrel{\eqref{remove all z_i}}\approx z_1t_1x^2z_1=\mathbf a_{1,0}^\prime[\varepsilon],
$$
and we are done.
By a similar argument we can show that if $n=0$ and $m=1$, then $\mathbf a_{0,1}[\rho] \approx \mathbf a_{0,1}^\prime[\varepsilon]$ holds in $\mathbf V$.
So, we may further assume that $n,m\ge1$.

If every block of $\mathbf a$ contains at most one occurrence of $x$, then $\mathbf a \approx \mathbf a_{n,m}^\prime[\rho]$ is a consequence of~\eqref{remove all z_i}, whence $\mathbf V$ satisfies $\mathbf a_{n,m}[\rho] \approx \mathbf a_{n,m}^\prime[\rho]$.
So, it remains to consider the case when $x$ is multiple in some block of $\mathbf a$.

Suppose that $e_j+f_j>1$ for some $j\in\{1,2,\dots,n+m\}$.
Then $e_j=f_j=1$.
We may assume without loss of generality that $j\le n$.
Clearly, $x^2$ is a factor of $\mathbf a(x,z_{n+1},t_{n+1})$ and $\mathbf a_{n,m}[\rho](x,z_{n+1},t_{n+1})$ coincides (up to renaming of letters) with $\mathbf a_{0,1}[\varepsilon]$.
Then Part~(i) implies that $\mathbf V$ satisfies $\mathbf a_{0,1}[\varepsilon]\approx \mathbf a_{0,1}^\prime[\varepsilon]$.
Evidently,
$$
\mathbf a\approx \biggl(\prod_{i=1}^{j-1} x^{e_i}z_ix^{f_i}t_i\biggr) \cdot(x^{e_j+f_j}z_jt_j)\cdot \biggl(\prod_{i=j+1}^n x^{e_i}z_ix^{f_i}t_i\biggr)\biggl(\prod_{i=1}^{n+m} z_{i\rho}x^{g_i}\biggr)\!\biggl(\prod_{i=n+1}^{n+m} t_ix^{e_i}z_ix^{f_i}\biggr)
$$
follows from $\mathbf a_{0,1}[\varepsilon]\approx \mathbf a_{0,1}^\prime[\varepsilon]$.
Now Part~(i) applies again and we conclude that $\mathbf a_{n,m}[\rho]  \approx \mathbf a_{n,m}^\prime[\rho]$ is satisfied by $\mathbf V$.

Suppose now that $e_i+f_i\le 1$ for any $i=1,2,\dots,n+m$.
Then $\sum_{i=0}^{n+m}g_i\ge2$.
In this case, there are $1\le s\le r\le n+m$ such that $xz_{s\rho}z_{(s+1)\rho}\cdots z_{r\rho}x$ is a factor of $\mathbf a$.
Let 
$$
X=\{x,t_1,t_2,\dots,t_{n+m},z_{s\rho},z_{(s+1)\rho},\dots ,z_{r\rho}\}.
$$
It is easy to see that the identity $\mathbf a_{n,m}[\rho](X)\approx \mathbf a(X)$ implies the identity
\begin{equation}
\label{power of a}
\mathbf a \approx \biggl(\prod_{i=1}^n \mathbf e_it_i\biggr) \biggl(\prod_{i=1}^{s-1} x^{g_{i-1}}z_{i\rho}\biggr) x^{\sum_{i=0}^{s-1} g_i} \biggl(\prod_{i=s}^r z_{i\rho}\biggr) x^{\sum_{i=t}^{n+m} g_i}\biggl(\prod_{i=t+1}^{n+m} z_{i\rho}x^{g_i}\biggr)\biggl(\prod_{i=n+1}^{n+m} \mathbf e_it_i\biggr),
\end{equation}
where
$$ 
\mathbf e_i= 
\begin{cases} 
x^{2e_i}z_ix^{2f_i} & \text{if }\ s\le i\rho\le r;\\ 
x^{2e_i+f_i}z_ix^{f_i} & \text{if }\ e_i=1 \ \text{ and }\  i\rho<s\ \text{ or }\ r<i\rho;\\ 
x^{e_i}z_ix^{e_i+2f_i} & \text{if }\ e_i=0 \ \text{ and }\  i\rho<s\ \text{ or }\ r<i\rho.
\end{cases} 
$$
Evidently, if $e_j=1$ or $f_j=1$ for some $1\le j\le n+m$, then $x^2$ is a factor of the left hand-side of~\eqref{power of a}.
If $e_i=f_i=0$ for any $i=1,2,\dots,n+m$, then either $\sum_{i=0}^{s-1} g_i>1$ or $\sum_{i=r}^{n+m} g_i>1$ because $\occ_x(\mathbf a)>2$ and, therefore, $x^2$ is a factor of the left hand-side of~\eqref{power of a} as well.
We see that the left hand-side of~\eqref{power of a} contains the factor $x^2$ in either case.
According to Part~(i), the identity $\mathbf a_{n,m}[\rho] \approx \mathbf a_{n,m}^\prime[\rho]$ holds in $\mathbf V$. 

Lemma~\ref{L: x^2 is a factor or occ_x > 2} is proved.
\end{proof}

For any $n,m\in\mathbb N_0$, $\rho\in S_{n+m}$ and $0\le p\le q\le n+m$, we put
$$
\mathbf a_{n,m}^{p,q}[\rho]=\biggl(\prod_{i=1}^n z_it_i\biggr)\biggl(\prod_{i=1}^p z_{i\rho}\biggr)x\biggl(\prod_{i=p+1}^q z_{i\rho}\biggr)x\biggl(\prod_{i=q+1}^{n+m} z_{i\rho}\biggr)\biggl(\prod_{i=n+1}^{n+m} t_iz_i\biggr).
$$

\begin{lemma}
\label{L: satisfies a_{n+m}[rho]=a_n^{n+m}[rho]}
Let $\mathbf V$ be a monoid variety such that $\mathbf M(xyx)\subseteq\mathbf V\subseteq \mathbf A\{\eqref{xyzxy=yxzxy},\,\eqref{xyzxy=xyzyx}\}$.
If, for any $(n,m)\in\hat{\mathbb N}_0^2$ and $\rho\in S_{n,m}$, the monoid $M(\mathbf a_{n,m}[\rho])$ does not lie in $\mathbf V$, then $\mathbf V\subseteq\mathbf A^\prime$.
\end{lemma}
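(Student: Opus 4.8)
The plan is to reduce everything to Lemma~\ref{L: x^2 is a factor or occ_x > 2}. Since $\mathbf A^\prime$ is defined by $x^2y\approx yx^2$ together with the identities $\mathbf a_{k,\ell}[\tau]\approx\mathbf a_{k,\ell}^\prime[\tau]$, and $\mathbf V\subseteq\mathbf A$, it suffices to fix $(n,m)\in\hat{\mathbb N}_0^2$ and $\rho\in S_{n,m}$ and to prove that $\mathbf V$ satisfies $\mathbf a_{n,m}[\rho]\approx\mathbf a_{n,m}^\prime[\rho]$; I would argue by induction on $n+m$. The claim is trivial for $n+m=0$, so assume $n+m\ge1$. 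By hypothesis $M(\mathbf a_{n,m}[\rho])\notin\mathbf V$, so Lemma~\ref{L: M(W) in V} gives a non-trivial identity $\mathbf a_{n,m}[\rho]\approx\mathbf a$ of $\mathbf V$.

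First I would determine the shape of $\mathbf a$. As $\mathbf M(xyx)\subseteq\mathbf V$, the word $xyx$ is an isoterm for $\mathbf V$ by Lemma~\ref{L: M(W) in V}; all blocks of $\hat{\mathbf a}_{n,m}[\rho]$ are linear and each letter occurs in it at most twice, so Lemma~\ref{L: xt_1x...t_kx is isoterm} shows that the identity $\hat{\mathbf a}_{n,m}[\rho]\approx\mathbf a_x$ obtained by deleting $x$ is linear-balanced. Comparing decompositions, this forces $\mathbf a_x=\bigl(\prod_{i=1}^n z_it_i\bigr)\bigl(\prod_{i=1}^{n+m}z_{i\sigma}\bigr)\bigl(\prod_{i=n+1}^{n+m}t_iz_i\bigr)$ for some $\sigma\in S_{n+m}$, and a short further argument — again using that $xyx$ is an isoterm for $\mathbf V$ — places both occurrences of $x$ in $\mathbf a$ inside the block $\prod_{i=1}^{n+m}z_{i\sigma}$. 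Hence $\mathbf a=\mathbf a_{n,m}^{p,q}[\sigma]$ for some $0\le p\le q\le n+m$.

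The hardest part will be to replace $\sigma$ by $\rho$. If $\mathbf V$ does not satisfy $\beta$, then $M(xzxyty)\in\mathbf V$ by Lemma~\ref{L: V does not contain M(xyzxty) or M(xtyzxy) or M(xzxyty)}(ii), whence $\hat{\mathbf a}_{n,m}[\rho]$ is an isoterm for $\mathbf V$ by Lemma~\ref{L: isoterms for M(xzxyty)}, so $\sigma=\rho$ and $\mathbf a=\mathbf a_{n,m}^{p,q}[\rho]$ outright. If $\mathbf V$ does satisfy $\beta$, I would analyse how $\beta$ (and $\alpha$, when $\mathbf V$ satisfies it) act on the crossing pattern of the $(n,m)$-permutation $\rho$: one shows that $\mathbf V$ satisfies $\hat{\mathbf a}_{n,m}[\rho]\approx\hat{\mathbf a}_{n,m}[\sigma]$, that this identity lifts, by reinserting the two occurrences of $x$, both to $\mathbf a_{n,m}[\rho]\approx\mathbf a_{n,m}[\sigma]$ and to $\mathbf a_{n,m}^\prime[\rho]\approx\mathbf a_{n,m}^\prime[\sigma]$, and that the only troublesome possibility, $\mathbf a=\mathbf a_{n,m}[\sigma]$ with $\sigma\ne\rho$, can be bypassed by feeding the hypothesis $M(\mathbf a_{k,\ell}[\tau])\notin\mathbf V$ for a suitably enlarged word $\mathbf a_{k,\ell}[\tau]$ back through the preceding analysis. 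In every case one reaches a non-trivial identity $\mathbf a_{n,m}[\rho]\approx\mathbf a_{n,m}^{p,q}[\rho]$ of $\mathbf V$.

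Finally I would deduce $\mathbf a_{n,m}[\rho]\approx\mathbf a_{n,m}^\prime[\rho]$ from this. If $p=q$, then $x^2$ is a factor of $\mathbf a_{n,m}^{p,p}[\rho]$ and $(\mathbf a_{n,m}^{p,p}[\rho])_x=\hat{\mathbf a}_{n,m}[\rho]$, so Lemma~\ref{L: x^2 is a factor or occ_x > 2}(i) applies and yields the required identity. If $p<q$, then $(p,q)\ne(0,n+m)$, so the factor $\prod_{i=p+1}^q z_{i\rho}$ of $\mathbf a_{n,m}^{p,q}[\rho]$ is a proper contiguous sub-block of the alternating middle block of $\mathbf a_{n,m}[\rho]$; consequently, up to renaming of letters, $x\bigl(\prod_{i=p+1}^q z_{i\rho}\bigr)x$ together with its two flanking families of simple letters is a copy of $\mathbf a_{\bar n,\bar m}[\bar\rho]$ for some $(\bar n,\bar m)\in\hat{\mathbb N}_0^2$ and $\bar\rho\in S_{\bar n,\bar m}$ with $\bar n+\bar m=q-p<n+m$. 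By the induction hypothesis $\mathbf V$ satisfies $\mathbf a_{\bar n,\bar m}[\bar\rho]\approx\mathbf a_{\bar n,\bar m}^\prime[\bar\rho]$; writing out an explicit substitution $\phi$ and words $\mathbf c,\mathbf d$ with $\mathbf a_{n,m}^{p,q}[\rho]=\mathbf c\,\phi(\mathbf a_{\bar n,\bar m}[\bar\rho])\,\mathbf d$, one checks $\mathbf c\,\phi(\mathbf a_{\bar n,\bar m}^\prime[\bar\rho])\,\mathbf d=\mathbf a_{n,m}^{p,p}[\rho]$, so $\mathbf V$ satisfies $\mathbf a_{n,m}^{p,q}[\rho]\approx\mathbf a_{n,m}^{p,p}[\rho]$ and hence $\mathbf a_{n,m}[\rho]\approx\mathbf a_{n,m}^{p,p}[\rho]$; the case $p=q$ now finishes the proof. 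As $(n,m)$ and $\rho$ were arbitrary, $\mathbf V\subseteq\mathbf A^\prime$.
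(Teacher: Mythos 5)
Your skeleton --- induction on $n+m$, pinning down $\mathbf a_x$ via the isoterm $xyx$ and Lemma~\ref{L: xt_1x...t_kx is isoterm}, and closing the subcase $p<q$ by applying the induction hypothesis to the copy of some $\mathbf a_{\bar n,\bar m}[\bar\rho]$ sitting between the two occurrences of $x$ --- matches the paper's strategy in its main case, but the two steps you defer are exactly where the content lies, and as written they are gaps. The first is the case where $\mathbf V$ satisfies $\beta$. There you cannot conclude $\sigma=\rho$, and your sketch (``one shows that $\mathbf V$ satisfies $\hat{\mathbf a}_{n,m}[\rho]\approx\hat{\mathbf a}_{n,m}[\sigma]$ \dots\ can be bypassed by feeding the hypothesis back'') contains no argument. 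The paper disposes of this case at the outset by a different mechanism: if $M(xzxyty)\notin\mathbf V$ then $\beta$ holds by Lemma~\ref{L: V does not contain M(xyzxty) or M(xtyzxy) or M(xzxyty)}(ii), and $\beta$ together with $x^2y\approx yx^2$ is shown (by the argument of Lemma~3.11 of Gusev--Vernikov) to imply \emph{every} identity $\mathbf a_{n,m}[\rho]\approx\mathbf a_{n,m}^\prime[\rho]$ directly, with no isoterm analysis; only then is $M(xzxyty)\in\mathbf V$ assumed, which is what makes Lemma~\ref{L: isoterms for M(xzxyty)} available and forces $\mathbf a_x=\hat{\mathbf a}_{n,m}[\rho]$. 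Note also that Lemma~\ref{L: x^2 is a factor or occ_x > 2}, which you need in order to dispose of $\occ_x(\mathbf a)>2$ and of $x^2$ being a factor, itself presupposes $\mathbf a_x=\hat{\mathbf a}_{n,m}[\rho]$ with the \emph{same} $\rho$, so it is not usable before the permutation issue is settled.

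The second gap is your assertion that the isoterm $xyx$ ``places both occurrences of $x$ inside the block $\prod z_{i\sigma}$.'' It only forces the two occurrences of $x$ into a common block of $\mathbf a$; that block may well be one of the blocks $z_j$, giving $\mathbf a=\cdots(xz_jxt_j)\cdots$. This possibility cannot be excluded --- it corresponds to $\mathbf V$ satisfying $ztxzx\approx xzxtz$, i.e.\ the identity~\eqref{ztxzx=xzxtz} --- and it is precisely why the paper works through the base cases $n+m=1$ and $n+m=2$ by hand (establishing that $\mathbf a_{0,1}[\varepsilon]\approx\mathbf a_{0,1}^\prime[\varepsilon]$ and $\mathbf a_{1,0}[\varepsilon]\approx\mathbf a_{1,0}^\prime[\varepsilon]$ hold in all circumstances) and then runs a separate second subcase in the induction step for ``no occurrence of $x$ between $t_n$ and $t_{n+1}$,'' resolved using $\mathbf a_{0,1}[\varepsilon]\approx\mathbf a_{0,1}^\prime[\varepsilon]$. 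Your induction, which only ever sees words of the form $\mathbf a_{n,m}^{p,q}[\sigma]$, already fails at $n+m=1$ on the word $xz_1xt_1z_1$. The part of your final paragraph treating $0<p<q$ (deleting letters to land on a shorter $\mathbf a_{\bar n,\bar m}[\bar\rho]$ with $\bar n+\bar m=q-p$ and invoking the induction hypothesis) is sound and is essentially what the paper does in its first subcase.
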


\begin{proof}
If $M(xzxyty)\notin \mathbf V$, then $\mathbf V$ satisfies $\beta$ by Lemma~\ref{L: V does not contain M(xyzxty) or M(xtyzxy) or M(xzxyty)}.
Then the same arguments as in the proof of Lemma~3.11 in~\cite{Gusev-Vernikov-21} imply that $\mathbf V$ satisfies $\mathbf a_{n,m}[\rho]\approx \mathbf a_{n,m}^\prime[\rho]$ for any $(n,m)\in\hat{\mathbb N}_0^2$ and $\rho\in S_{n,m}$.
So, we may assume that $M(xzxyty)\in \mathbf V$.

Since $M(\mathbf a_{1,0}[\varepsilon])\notin\mathbf V$, it follows from Lemma~\ref{L: M(W) in V} that $\mathbf V$ satisfies a non-trivial identity $\mathbf a_{1,0}[\varepsilon]\approx \mathbf a$ for some $\mathbf a\in \mathfrak X^\ast$. 
Lemma~\ref{L: isoterms for M(xzxyty)} implies that $\mathbf a_x=\hat{\mathbf a}_{1,0}[\varepsilon]$.
If $x^2$ is a factor of $\mathbf a$ or $\occ_x(\mathbf a)>2$, then $\mathbf V$ satisfies the identity $\mathbf a_{1,0}[\varepsilon] \approx \mathbf a_{1,0}^\prime[\varepsilon]$ by Lemma~\ref{L: x^2 is a factor or occ_x > 2}.
If $x^2$ is not a factor of $\mathbf a$ and $\occ_x(\mathbf a)\le 2$, then, since $xyx$ is an isoterm for $\mathbf V$, the identity $\mathbf a_{1,0}[\varepsilon]\approx \mathbf a$ must coincide (up to renaming of letters) with
\begin{equation}
\label{ztxzx=xzxtz}
ztxzx\approx xzxtz.
\end{equation}
By a similar argument one can show that the assumption that $M(\mathbf a_{0,1}[\varepsilon])\notin\mathbf V$ implies that $\mathbf V$ satisfies one of the identities $\mathbf a_{0,1}[\varepsilon]\approx \mathbf a_{0,1}^\prime[\varepsilon]$ or~\eqref{ztxzx=xzxtz}.

Let $\tau\in S_2=S_{1,1}$.
Since $M(\mathbf a_{1,1}[\tau])\notin\mathbf V$, Lemma~\ref{L: M(W) in V} implies that $\mathbf V$ satisfies a non-trivial identity $\mathbf a_{1,1}[\tau]\approx \mathbf a$ for some $\mathbf a\in \mathfrak X^\ast$. 
It follows from Lemma~\ref{L: isoterms for M(xzxyty)} that $\mathbf a_x=\hat{\mathbf a}_{1,1}[\tau]$.
If $x^2$ is a factor of $\mathbf a$ or $\occ_x(\mathbf a)>2$, then $\mathbf a_{1,1}[\tau] \approx \mathbf a_{1,1}^\prime[\tau]$ holds in $\mathbf V$ by Lemma~\ref{L: x^2 is a factor or occ_x > 2}.
Suppose now that $\occ_x(\mathbf a)\le2$ and $x^2$ is not a factor of $\mathbf a$.
Since $xyx$ and so $x$ are isoterms for $\mathbf V$, we have $\occ_x(\mathbf a)=2$.
Assume that some occurrence of $x$ lies between the first occurrences of $t_1$ and $t_2$ in $\mathbf a$.
Since $x^2$ is not a factor of $\mathbf a$ and the identity $\mathbf a_{1,1}[\tau]\approx \mathbf a$ is non-trivial, we may assume without any loss that $\mathbf a=\mathbf a_{1,1}^{1,2}[\tau]$.
Hence $(\mathbf a_{1,1}[\tau])_{\{z_{2\tau},t_{2\tau},x\}} \approx \mathbf a_{\{z_{2\tau},t_{2\tau},x\}}$ and $x^2y\approx yx^2$ imply either $\mathbf a_{1,0}[\varepsilon] \approx \mathbf a_{1,0}^\prime[\varepsilon]$ or $\mathbf a_{0,1}[\varepsilon] \approx \mathbf a_{0,1}^\prime[\varepsilon]$.
Then both $\mathbf a_{0,1}[\varepsilon] \approx \mathbf a_{0,1}^\prime[\varepsilon]$ and $\mathbf a_{1,0}[\varepsilon] \approx \mathbf a_{1,0}^\prime[\varepsilon]$ hold in $\mathbf V$ because
$$
\mathbf A\{\mathbf a_{1,0}[\varepsilon] \approx \mathbf a_{1,0}^\prime[\varepsilon],\,\eqref{ztxzx=xzxtz}\}=\mathbf A\{\mathbf a_{0,1}[\varepsilon] \approx \mathbf a_{0,1}^\prime[\varepsilon],\,\eqref{ztxzx=xzxtz}\}.
$$
Then $\mathbf V$ satisfies $\mathbf a_{1,1}[\tau] \approx \mathbf a_{1,1}^\prime[\tau]$ because $\mathbf a=\mathbf a_{1,1,}^{1,2}[\tau] \approx \mathbf a_{1,1}^\prime[\tau]$ is a consequence of 
$$
\{\mathbf a_{1,0}[\varepsilon] \approx \mathbf a_{1,0}^\prime[\varepsilon],\,\mathbf a_{0,1}[\varepsilon] \approx \mathbf a_{0,1}^\prime[\varepsilon],\,x^2y\approx yx^2\}.
$$
Assume now that there are no occurrences of $x$ between the first occurrences of $t_1$ and $t_2$ in $\mathbf a$. 
Then we may assume without any loss that some occurrence of $x$ precedes the first occurrence of $t_1$ in $\mathbf a$.
Since $xyx$ is an isoterm for $\mathbf V$ and $x^2$ is not a factor of $\mathbf a$, we have $\mathbf a=xz_1xt_1z_{1\tau}z_{2\tau}t_2z_2$.
Recall that either~\eqref{ztxzx=xzxtz} or $\mathbf a_{0,1}[\varepsilon]\approx  \mathbf a_{0,1}^\prime[\varepsilon]$ holds in $\mathbf V$.
If~\eqref{ztxzx=xzxtz} holds in $\mathbf V$, then $\mathbf V$ satisfies a non-trivial identity $\mathbf a\stackrel{\eqref{ztxzx=xzxtz}}\approx\mathbf a_{1,1}^{\ell_1,\ell_2}[\tau]$ for some $0\le \ell_1\le \ell_2\le 2$.
In this case, the same arguments as in the above show that $\mathbf a_{1,1}[\tau]\approx\mathbf a_{1,1}^\prime[\tau]$ holds in $\mathbf V$.
If $\mathbf a_{0,1}[\varepsilon]\approx  \mathbf a_{0,1}^\prime[\varepsilon]$ holds in $\mathbf V$, then $\mathbf V$ satisfies $\mathbf a\stackrel{\mathbf a_{0,1}[\varepsilon]\approx  \mathbf a_{0,1}^\prime[\varepsilon]}\approx x^2z_1t_1z_{1\tau}z_{2\tau}t_2z_2 \stackrel{x^2y\approx yx^2}\approx \mathbf a_{1,1}^\prime[\tau]$.
We see that $\mathbf a_{1,1}[\tau]\approx \mathbf a_{1,1}^\prime[\tau]$ holds in $\mathbf V$ in either case.
Then $\mathbf V$ satisfies also the identities $\mathbf a_{0,1}[\varepsilon] \approx \mathbf a_{0,1}^\prime[\varepsilon]$ and $\mathbf a_{1,0}[\varepsilon] \approx \mathbf a_{1,0}^\prime[\varepsilon]$ because they are consequences of $\mathbf a_{1,1}[\tau] \approx \mathbf a_{1,1}^\prime[\tau]$.

We have proved that there exists a number $r$ such that $\mathbf V$ satisfies the identity $\mathbf a_{r_1,r_2}[\rho]\approx \mathbf a_{r_1,r_2}^\prime[\rho]$ for all $(r_1,r_2)\in\hat{\mathbb N}_0^2$ and $\rho\in S_{r_1,r_2}$ with $r_1+r_2\le r$ (for instance, $r=1$). 
We are going to verify that an arbitrary $r$ possesses this property. 
Arguing by contradiction, we suppose that the mentioned claim is true for $r=1,2,\dots,k-1$ but is false for $r=k$.
Then $\mathbf V$ violates $\mathbf a_{n,m}[\rho]\approx\mathbf a_{n,m}^\prime[\rho]$ for some $(n,m)\in\hat{\mathbb N}_0^2$ and $\rho\in S_{n,m}$ such that $n+m=k$.
According to Lemma~\ref{L: M(W) in V}, $\mathbf V$ satisfies a non-trivial identity $\mathbf a_{n,m}[\rho] \approx \mathbf a$.
In view of Lemma~\ref{L: isoterms for M(xzxyty)}, $\mathbf a_x=\hat{\mathbf a}_{n,m}[\rho]$.
Then $\occ_x(\mathbf a)\le2$ and $x^2$ is not a factor of $\mathbf a$ by Lemma~\ref{L: x^2 is a factor or occ_x > 2}.
It follows from the fact that $x$ is an isoterm for $\mathbf V$ that $\occ_x(\mathbf a)=2$.

Suppose that some occurrence of $x$ lies between the first occurrences of $t_n$ and $t_{n+1}$ in $\mathbf a$.
Since $x^2$ is not a factor of $\mathbf a$ and the identity $\mathbf a_{n,m}[\rho]\approx \mathbf a$ is non-trivial, we may assume without any loss that $\mathbf a=\mathbf a_{n,m}^{p,q}[\rho]$ for some $0<p< q\le n+m$.
Let 
$$
X=\{z_{1\rho},t_{1\rho},z_{2\rho},t_{2\rho},\dots,z_{p\rho},t_{p\rho},z_{(q+1)\rho},t_{(q+1)\rho},z_{(q+2)\rho},t_{(q+2)\rho},\dots,z_{k\rho},t_{k\rho}\}.
$$
Clearly, $(\mathbf a_{n,m}[\rho])_X$ coincides (up to renaming of letters) with $\mathbf a_{c,d}[\pi]$ for some $(c,d)\in\hat{\mathbb N}_0^2$ and $\pi \in S_{c,d}$ such that $c+d=q-p$.
Since $\mathbf a_{c,d}[\pi]\approx \mathbf a_{c,d}^\prime[\pi]$ holds in the variety $\mathbf V$, this variety must satisfy $\mathbf a=\mathbf a_{n,m}^{p,q}[\rho]  \stackrel{x^2y\approx yx^2}\approx \mathbf a_{n,m}^{p,p}[\rho] \stackrel{x^2y\approx yx^2}\approx \mathbf a_{n,m}^\prime[\rho]$ contradicting the choice of $n,m$ and $\rho$.

Suppose now that there are no occurrences of $x$ between the first occurrences of $t_n$ and $t_{n+1}$ in $\mathbf a$. 
Then we may assume without any loss that some occurrence of $x$ precedes the first occurrence of $t_n$ in $\mathbf a$.
Since $xyx$ is an isoterm for $\mathbf V$ and $x^2$ is not a factor of $\mathbf a$, we have 
$$
\mathbf a=\biggl(\prod_{i=1}^{j-1} z_it_i\biggr)\cdot (xz_jxt_j)\cdot \biggl(\prod_{i=j+1}^n z_it_i\biggr)\biggl(\prod_{i=1}^{n+m} z_{i\rho}\biggr)\biggl(\prod_{i=n+1}^{n+m} t_iz_i\biggr)
$$
for some $j\in\{1,2,\dots,n\}$.
Taking into account the fact that the identities $\mathbf a_{0,1}[\varepsilon]\approx  \mathbf a_{0,1}^\prime[\varepsilon]$ and $x^2y\approx yx^2$ hold in $\mathbf V$, we obtain that $\mathbf V$ satisfies $\mathbf a_{n,m}[\rho]\approx \mathbf a_{n,m}^{j-1,j-1}[\rho]\approx  \mathbf a_{n,m}^\prime[\rho]$ contradicting the choice of $n,m$ and $\rho$ again.
\end{proof}

\subsection{Identities of the form $\mathbf c_{n,m,k}[\rho] \approx \mathbf c_{n,m,k}^\prime[\rho]$}

\begin{lemma}
\label{L: V notin M(c_{n,m,k}[rho])}
Let $\mathbf V$ be a variety such that $\mathbf M(xyx)\subseteq\mathbf V$ and $\mathbf N\nsubseteq\mathbf V$.
Suppose that $\mathbf V$ does not contain the monoids $M(\mathbf c_{n,m,0}[\tau])$ and $M(\mathbf c_{n,m,n+m+1}[\pi])$ for all $n,m\in\mathbb N_0$, $\tau\in S_{n+m}$ and $\pi\in S_{n+m,n+m+1}$. 
Then $\mathbf V$ satisfies the identity $\mathbf c_{n,m,k}[\rho] \approx \mathbf c_{n,m,k}[\rho]$ for any $n,m,k\in\mathbb N_0$ and $\rho\in S_{n+m+k}$.
\end{lemma}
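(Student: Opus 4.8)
The plan is to adapt the scheme of Lemma~\ref{L: satisfies a_{n+m}[rho]=a_n^{n+m}[rho]}, with the words $\mathbf a_{n,m}[\rho]$ replaced by the words $\mathbf c_{n,m,k}[\rho]$, carrying out an induction on the third parameter $k$ for which the families $k=0$ and $k=n+m+1$ supplied by the hypothesis serve as anchors. First I would dispose of several degenerate cases. The identity is trivially true when $\mathbf V$ is commutative; since $\mathbf M(xyx)\subseteq\mathbf V$ the word $xyx$ is an isoterm for $\mathbf V$ by Lemma~\ref{L: M(W) in V}, so $\mathbf V$ is not commutative and this case is vacuous. If $\mathbf V\not\subseteq\mathbf A$, so that $\mathbf V$ violates $x^2y\approx yx^2$, or if $M(xtyzxy)\notin\mathbf V$ or $M(xzxyty)\notin\mathbf V$ (whence $\mathbf V$ satisfies $\alpha$ or $\beta$ by Lemma~\ref{L: V does not contain M(xyzxty) or M(xtyzxy) or M(xzxyty)}), the desired identity follows by an argument parallel to the ``easy case'' of Lemma~\ref{L: satisfies a_{n+m}[rho]=a_n^{n+m}[rho]}: against the background of $xyx$ being an isoterm, the extra identity already forces $\mathbf c_{n,m,k}[\rho]$ to collapse so that $\mathbf c_{n,m,k}[\rho]\approx\mathbf c_{n,m,k}^\prime[\rho]$ holds. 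So from now on I would assume $\mathbf V\subseteq\mathbf A$ and $M(xtyzxy),M(xzxyty)\in\mathbf V$. The remaining use of the hypothesis $\mathbf N\nsubseteq\mathbf V$ is to force $\mathbf V$ to satisfy the auxiliary identities~\eqref{xyzxy=yxzxy} and~\eqref{xyzxy=xyzyx}: since $\mathbf N$ violates both of these, if $\mathbf V$ failed one of them then an analysis of the corresponding non-trivial image of $xyzxy$ (available because $M(\mathbf c_{n,m,0}[\tau])\notin\mathbf V$) would produce a copy of $\mathbf N$ inside $\mathbf V$. Consequently $\mathbf V$ meets the hypotheses of Lemma~\ref{L: satisfies a_{n+m}[rho]=a_n^{n+m}[rho]} --- the absence of every $M(\mathbf a_{p,q}[\pi])$ being forced by the absence of the $\mathbf c$-monoids after deleting suitable letters --- so $\mathbf V\subseteq\mathbf A^\prime$, and Lemmas~\ref{L: from pxqxr to px^2qr} and~\ref{L: x^2 is a factor or occ_x > 2} become available.

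Next I would fix $n,m$ and argue by induction on $k$. Whenever $\mathbf V$ satisfies a non-trivial identity $\mathbf c_{n,m,k}[\rho]\approx\mathbf c$, the isoterm $xyx$ together with Lemma~\ref{L: xt_1x...t_kx is isoterm} forces $\mathbf c$ to have the same skeleton of simple letters as $\mathbf c_{n,m,k}[\rho]$ and all its blocks linear; hence the block carrying the first occurrences of $x$ and $y$ is one of $xy,yx$, while the long block carrying their second occurrences together with $z_{1\rho},\dots,z_{(n+m+k)\rho}$ is some permutation of these letters. Invoking the isoterms $xzxyty$ and $xtyzxy$ (together with further short words, exactly as in the second paragraph of the proof of Proposition~\ref{P: L(M(c_{n,m,n+m+1}[rho])) is not modular}), I would show that this long block is rigid, namely equal to $xz_{1\rho}z_{2\rho}\cdots z_{(n+m+k)\rho}y$, whenever $\rho$ is compatible with the position of $n+m$ in the sense arising in that proof; and when $\rho$ is not so compatible, $\mathbf c_{n,m,k}[\rho]$ collapses, and $\mathbf c_{n,m,k}[\rho]\approx\mathbf c_{n,m,k}^\prime[\rho]$ is recovered from $x^2y\approx yx^2$, the identities $\mathbf a_{p,q}[\pi]\approx\mathbf a_{p,q}^\prime[\pi]$ (valid since $\mathbf V\subseteq\mathbf A^\prime$), and the induction hypothesis. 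In the rigid case, therefore, the only non-trivial identity with left-hand side $\mathbf c_{n,m,k}[\rho]$ that $\mathbf V$ can satisfy is $\mathbf c_{n,m,k}[\rho]\approx\mathbf c_{n,m,k}^\prime[\rho]$.

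It remains to know that $\mathbf c_{n,m,k}[\rho]$ is in fact not an isoterm for $\mathbf V$. For $k=0$, and for $k=n+m+1$ with $\rho\in S_{n+m,n+m+1}$, this is precisely the hypothesis. For a general pair $(k,\rho)$ I would use the ``growth'' operation $\theta\mapsto\theta_{q,r}\colon S_p\to S_{p+1}$ from the proof of Proposition~\ref{P: L(M(c_{n,m,n+m+1}[rho])) is not modular}: if $\mathbf c_{n,m,k}[\rho]$ were an isoterm for $\mathbf V$, then so would be $\mathbf c_{n,m,k+1}[\rho_{q,r}]$ for a suitable choice of $q,r$, and by iterating --- steering the choices so as to land in $S_{n+m,n+m+1}$ --- one would reach an isoterm $\mathbf c_{n,m,n+m+1}[\pi]$ with $\pi\in S_{n+m,n+m+1}$, contradicting the hypothesis (for $k>n+m+1$ one instead runs the growth argument upward from the anchor $k=n+m+1$). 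Combining the two halves yields $\mathbf V\models\mathbf c_{n,m,k}[\rho]\approx\mathbf c_{n,m,k}^\prime[\rho]$ for all $n,m,k$ and all $\rho\in S_{n+m+k}$. I expect the two main obstacles to be, first, extracting the auxiliary identities~\eqref{xyzxy=yxzxy},~\eqref{xyzxy=xyzyx} and the inclusion $\mathbf V\subseteq\mathbf A^\prime$ from $\mathbf N\nsubseteq\mathbf V$, and second, the combinatorial bookkeeping behind the rigidity of the long block and behind steering the growth operation into $S_{n+m,n+m+1}$; the remaining steps should be a faithful, if lengthy, adaptation of the machinery already developed for the words $\mathbf a_{n,m}[\rho]$ in Lemmas~\ref{L: x^2 is a factor or occ_x > 2} and~\ref{L: satisfies a_{n+m}[rho]=a_n^{n+m}[rho]}.
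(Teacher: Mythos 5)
Your overall architecture (rigidity of $\mathbf c_{n,m,k}[\rho]$ forced by the isoterms $xyx$, $xzxyty$, $xtyzxy$, with the hypothesized non-isoterms as anchors) captures only the easy half of the argument, and there are two genuine gaps. The first is the case you dismiss as degenerate: if $M(xzxyty)\notin\mathbf V$, so that $\mathbf V$ satisfies $\beta$, your claim that the extra identity ``already forces $\mathbf c_{n,m,k}[\rho]$ to collapse'' onto $\mathbf c_{n,m,k}^\prime[\rho]$ is false. The two words differ only in the order of the \emph{first} occurrences of $x$ and $y$ (the block $xy$ versus $yx$ before $t$), and neither $\alpha$ nor $\beta$ nor $x^2y\approx yx^2$ nor the identities $\mathbf a_{p,q}[\pi]\approx\mathbf a_{p,q}^\prime[\pi]$ can interchange two first occurrences of square-free letters: concretely, $M(xytxy)$ satisfies $\beta$ and $x^2y\approx yx^2$, yet $xytxy=\mathbf c_{0,0,0}[\tau]$ is an isoterm for it. The analogy with the easy case of Lemma~\ref{L: satisfies a_{n+m}[rho]=a_n^{n+m}[rho]} breaks precisely here, because for the words $\mathbf a_{n,m}[\rho]$ the two sides differ in the position of a \emph{second} occurrence, which is the kind of move $\beta$ can make. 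In the paper this is the substantive case: $\beta$ is used to rewrite both sides into the normal forms~\eqref{two identities}, whose difference is exactly the $k=0$ identity $\mathbf c_{n,m,0}[\tau]\approx\mathbf c_{n,m,0}^\prime[\tau]$, and that identity is then extracted from the hypothesis $M(\mathbf c_{n,m,0}[\tau])\notin\mathbf V$ via Lemma~3.14 of~\cite{Gusev-Vernikov-21} (which is where $\mathbf N\nsubseteq\mathbf V$ actually enters). Your proposal never uses the $k=0$ hypothesis to \emph{derive} an identity, only as an isoterm anchor, so this branch is left unproved.

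The second gap is that your reduction of a general $(n,m,k,\rho)$ to the hypothesized families runs in the wrong direction. You fix $n,m$ and try to grow $\mathbf c_{n,m,k}[\rho]$ as an isoterm until it lands on some $\mathbf c_{n,m,n+m+1}[\pi]$ with $\pi\in S_{n+m,n+m+1}$; but $S_{n+m,n+m+1}$ consists of alternating permutations, and an arbitrary $\rho\in S_{n+m+k}$ need not extend to one using only the $n+m+1-k$ available insertions (and for $k>n+m+1$ there are no insertions to make, so ``running the growth upward from the anchor'' says nothing about the given word). The paper avoids this entirely by the opposite, and much more flexible, reduction: for any $p,q,r$ and $\rho\in S_{p+q+r}$ it chooses \emph{new, sufficiently large} $n,m$ and an alternating $\pi\in S_{n+m,n+m+1}$ so that $\mathbf c_{p,q,r}[\rho]\approx\mathbf c_{p,q,r}^\prime[\rho]$ is a substitution consequence of $\mathbf c_{n,m,n+m+1}[\pi]\approx\mathbf c_{n,m,n+m+1}^\prime[\pi]$ --- deleting letters from a long alternating pattern realizes any pattern, whereas inserting letters into a fixed pattern cannot always produce an alternating one. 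With that reduction only the single case $k=n+m+1$, $\pi\in S_{n+m,n+m+1}$ remains, settled by the dichotomy on whether $xzxyty$ is an isoterm. You should restructure the proof around this consequence-of-identities reduction and supply the $\beta$-branch argument before the remaining rigidity bookkeeping can be trusted.
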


\begin{proof}
It is routine to check that, for any $p,q,r\in\mathbb N_0$ and $\rho\in S_{p+q+r}$, one can construct $\pi\in S_{n+m,n+m+1}$ for some $n,m\in \mathbb N_0$ such that $\mathbf c_{p,q,r}[\rho] \approx \mathbf c_{p,q,r}[\rho]$ is a consequence of
\begin{equation}
\label{c_{n,m,n+m+1}[pi]=c_{n,m,n+m+1}[pi]}
\mathbf c_{n,m,n+m+1}[\pi] \approx \mathbf c_{n,m,n+m+1}^\prime[\pi].
\end{equation}
In view of this fact, it suffices to show that $\mathbf V$ satisfies the identity~\eqref{c_{n,m,n+m+1}[pi]=c_{n,m,n+m+1}[pi]} for any $n,m\in \mathbb N_0$ and $\pi\in S_{n+m,n+m+1}$. 

Let us fix $n,m\in \mathbb N_0$ and $\pi\in S_{n+m,n+m+1}$.
Put $k=n+m$.
Suppose that $xzxyty$ is an isoterm for $\mathbf V$.
Then it is easy to see that if $\mathbf c_{n,m,k+1}[\pi] \approx \mathbf c$ is an identity of $\mathbf V$, then 
$
\mathbf c\in\{\mathbf c_{n,m,k+1}[\pi],\mathbf c_{n,m,k+1}^\prime[\pi]\}
$.
Since $M(\mathbf c_{n,m,k+1}[\pi])\notin\mathbf V$, this fact and Lemma~\ref{L: M(W) in V} imply that~\eqref{c_{n,m,n+m+1}[pi]=c_{n,m,n+m+1}[pi]} holds in $\mathbf V$.
Thus, we may further assume that $xzxyty$ is not an isoterm for $\mathbf V$.
Then $\mathbf V$ satisfies $\beta$ by Lemma~\ref{L: V does not contain M(xyzxty) or M(xtyzxy) or M(xzxyty)}(ii).
It follows that $\mathbf V$ satisfies the identities
\begin{equation}
\label{two identities}
\begin{aligned}
\mathbf c_{n,m,k+1}[\pi] \stackrel{\beta}\approx\biggl(\prod_{i=1}^n z_it_i\biggr)xyt\biggl(\prod_{i=n+1}^k z_it_i\biggr)x\biggl(\prod_{i=1}^{2k+1} z_{i\pi}\biggr)_{Z_1}y\biggl(\prod_{i=1}^{2k+1} z_{i\pi}\biggr)_{Z_2}\biggl(\prod_{i=k+1}^{2k+1} t_iz_i\biggr),\\
\mathbf c_{n,m,k+1}^\prime[\pi] \stackrel{\beta}\approx\biggl(\prod_{i=1}^n z_it_i\biggr)yxt\biggl(\prod_{i=n+1}^k z_it_i\biggr)x\biggl(\prod_{i=1}^{2k+1} z_{i\pi}\biggr)_{Z_1}y\biggl(\prod_{i=1}^{2k+1} z_{i\pi}\biggr)_{Z_2}\biggl(\prod_{i=k+1}^{2k+1} t_iz_i\biggr),
\end{aligned}
\end{equation}
where $Z_1=\{z_i\mid k+1\le i\le 2k+1\}$ and $Z_2=\{z_i\mid 1\le i\le k\}$.
Evidently, there exists $\tau\in S_k$ such that 
$$
\mathbf c_{n,m,k}[\pi](X)=\mathbf c_{n,m,0}[\tau]\ \text{ and }\ \mathbf c_{n,m,k}^\prime[\pi](X)=\mathbf c_{n,m,0}^\prime[\tau],
$$
where $X=\{x,y,z_i,t_i\mid 1\le i \le k\}$.
Since $M(\mathbf c_{n,m,0}[\tau])\notin\mathbf V$, Lemma~3.14 in~\cite{Gusev-Vernikov-21} implies the $\mathbf V$ satisfies $\mathbf c_{n,m,0}[\tau]\approx\mathbf c_{n,m,0}^\prime[\tau]$.
Then the identity~\eqref{c_{n,m,n+m+1}[pi]=c_{n,m,n+m+1}[pi]} holds in $\mathbf V$ because it is a consequence of $\mathbf c_{n,m,0}[\tau]\approx\mathbf c_{n,m,0}^\prime[\tau]$ and~\eqref{two identities}. 
\end{proof}

\section{Proof of Theorem~\ref{T: A_cen}}
\label{Sec: proof}

\textit{Necessity}.
Let $\mathbf V$ be a distributive subvariety of $\mathbf A_\mathsf{cen}$.
In view of Lemma~\ref{L: subvariety of A_cen}, any subvariety of $\mathbf A_\mathsf{cen}$ satisfies the identities
\begin{equation}
\label{x^n=x^{n+1} and x^ny=yx^n}
x^n \approx x^{n+1}\ \text{ and } \ x^ny \approx yx^n
\end{equation}
for some $n\in \mathbb N$.
Let $n$ be the least number such that $\mathbf V$ satisfies~\eqref{x^n=x^{n+1} and x^ny=yx^n}.
If $n=1$, then $\mathbf V\subseteq\mathbf{SL}=\mathbf P_1$, and we are done.
So, we may further assume that $n>1$.
Two cases are possible.

\smallskip

\textit{Case }1: $\mathbf M(xyx)\subseteq\mathbf V$.
If $n=2$, then $\mathbf V$ satisfies the identity $x^2y \approx yx^2$ because this identity is nothing but the second identity in~\eqref{x^n=x^{n+1} and x^ny=yx^n}.
Let now $n>2$.
In view of Proposition~\ref{P: M(x^2y,yx^2)}, $M(x^2y)\notin\mathbf V$.
Then Lemma~\ref{L: M(W) in V} implies that $\mathbf V$ satisfies a non-trivial identity $x^2y\approx \mathbf v$.
According to Lemma~\ref{L: x^n is an isoterm}, $x^2$ is an isoterm for $\mathbf V$, whence $\con(\mathbf v)=\{x,y\}$, $\occ_x(\mathbf v)=2$ and $\occ_y(\mathbf v)=1$.
This is only possible when $\mathbf v=yx^2$ because $\mathbf M(xyx)\subseteq\mathbf V$ and the identity $x^2y\approx \mathbf v$ is non-trivial.
We see that $\mathbf V$ satisfies $x^2y\approx yx^2$ in either case.
Two subcases are possible.

\smallskip

\textit{Subcase }1.1: $\mathbf N\subseteq\mathbf V$ or $\mathbf N^\delta\subseteq\mathbf V$.
By symmetry, we may assume that $\mathbf N\subseteq\mathbf V$.
It is shown in~\cite[Theorem~1.1]{Gusev-19} that the lattice $\mathfrak L(\mathbf M(xtyzxy)\vee\mathbf N)$ is not modular.
Therefore, $M(xtyzxy)\notin \mathbf V$.
Then $\mathbf V$ satisfies $\alpha$ by Lemma~\ref{L: V does not contain M(xyzxty) or M(xtyzxy) or M(xzxyty)}(i).
In view of Lemma~\ref{P: M(xzxyty) vee N}, $M(xyzxty)\notin \mathbf V$.
Then Lemma~\ref{L: V does not contain M(xyzxty) or M(xtyzxy) or M(xzxyty)}(ii) implies that $\mathbf V$ satisfies $\beta$.
Hence $\mathbf V\subseteq \mathbf R_n$.

\smallskip

\textit{Subcase }1.2: $\mathbf N,\mathbf N^\delta\nsubseteq\mathbf V$.
According to Propositions~\ref{P: L(M(c_{n,m,n+m+1}[rho])) is not modular} and~\ref{P: L(M(c_{n,m,0}[rho])) is not modular} and the statements dual to they, the variety $\mathbf V$ does not contain the monoids 
$$
M(\mathbf c_{n,m,n+m+1}[\pi]),\ M(\mathbf d_{n,m,n+m+1}[\pi]),\ M(\mathbf c_{n,m,0}[\tau])\ \text{ and }\ M(\mathbf d_{n,m,0}[\tau])
$$
for all $n,m\in\mathbb N_0$, $\pi\in S_{n+m,n+m+1}$ and $\tau\in S_{n+m}$.
Then Lemma~\ref{L: V notin M(c_{n,m,k}[rho])} and the dual to it imply that $\mathbf V$ satisfies the identities
$$
\mathbf c_{n,m,k}[\rho]\approx \mathbf c_{n,m,k}^\prime[\rho]\ \text{ and }\ \mathbf d_{n,m,k}[\rho]\approx \mathbf d_{n,m,k}^\prime[\rho]
$$
for any $n,m,k\in\mathbb N_0$ and $\rho\in S_{n+m+k}$.
 
Further, $M(\mathbf a_{n,m}[\rho])\notin\mathbf V$ for any $(n,m)\in\mathbb N_0^2$ and $\rho\in S_{n,m}$ by Proposition~\ref{P: L(M(a_{n,m}[rho])) is not distributive}.
Then Lemma~\ref{L: satisfies a_{n+m}[rho]=a_n^{n+m}[rho]} implies that $\mathbf V$ satisfies $\mathbf a_{n,m}[\rho]\approx \mathbf a_{n,m}^\prime[\rho]$ for any $(n,m)\in\hat{\mathbb N}_0^2$ and $\rho\in S_{n,m}$.
It follows from the proof of Lemma~4.4 in~\cite{Gusev-Vernikov-18} that $\mathbf a_{n,m}[\rho]\approx \mathbf a_{n,m}^\prime[\rho]$ holds in $\mathbf V$ for any $n,m\in\mathbb N_0$ and $\rho\in S_{n+m}$.
Hence $\mathbf V\subseteq\mathbf P_n$.

\smallskip

\textit{Case }2: $\mathbf M(xyx)\nsubseteq\mathbf V$.
According to Lemma~\ref{L: M(W) in V}, $\mathbf V$ satisfies a non-trivial identity $xyx\approx \mathbf v$.
Since $n>1$, Lemma~\ref{L: x^n is an isoterm} implies that $x$ is an isoterm for $\mathbf V$.
Then $\mathbf v=x^syx^t$, where either $s\ge 2$ or $t\ge 2$.
By symmetry, we may assume that $s\ge 2$.
If $n=2$, then $\mathbf V$ satisfies $x^2y\approx xyx$ because
$$
xyx\approx x^syx^t \stackrel{x^2\approx x^3}\approx x^2yx^t \stackrel{x^2y\approx yx^2}\approx yx^{2+t}\stackrel{x^2\approx x^3}\approx yx^2 \stackrel{x^2y\approx yx^2}\approx x^2y.
$$
If $n>2$, then $s=2$ and $t=0$ by Lemma~\ref{L: x^n is an isoterm}.
We see that $x^2y\approx xyx$ is satisfied by $\mathbf V$ in either case, whence $\mathbf V\subseteq\mathbf Q_n$. 

\medskip

\textit{Sufficiency}.
By symmetry, it suffices to show that the varieties $\mathbf P_n$, $\mathbf Q_n$ and $\mathbf R_n$ are distributive.
It is shown in~\cite[Corollary~5.8]{Gusev-Vernikov-21} that the lattice $\mathfrak L(\mathbf Q_n)$ is distributive.
So, it remains to prove that the lattices $\mathfrak L(\mathbf P_n)$ and $\mathfrak L(\mathbf R_n)$ are distributive.

\smallskip

\textit{Distributivity of $\mathfrak L(\mathbf P_n)$}.
We are going to deduce the required fact from Lemma~\ref{L: smth imply distributivity} with $\mathbf V=\mathbf P_n$ and $\Sigma=\Phi$, where $\Phi$ consists of the identity $xy\approx yx$, all identities of the form $x^k \approx x^\ell$ with $k,\ell\in\mathbb N$, all identities of the form $\delta_{k,\ell}$ with $k,\ell\in\mathbb N_0$ and all identities of the form~\eqref{pxyq=pyxq} such that the equalities~\eqref{pxyq=pyxq restrict} hold. 
In view of Lemmas~\ref{L: x^n=x^m in X wedge Y} and~\ref{L: pxyq=pyxq in X wedge Y} and Corollaries~\ref{C: xy=yx in X wedge Y} and~\ref{C: delta_{n,m} in X wedge Y}, to do this, it remains to prove only that each subvariety of $\mathbf P_n$ may be given within $\mathbf P_n$ by some subset of $\Phi$.

Let $\mathbf w^\prime \approx \mathbf v^\prime$ be an arbitrary identity.
It suffices to verify that $\mathbf P_n\{\mathbf w^\prime \approx \mathbf v^\prime\}=\mathbf P_n\Gamma$ for some $\Gamma\subseteq\Phi$.
If $\mathbf P_n\{\mathbf w^\prime \approx \mathbf v^\prime\}$ is commutative, then it can be defined by some subset of $\{x^k\approx x^\ell,\,xy\approx yx\mid k,\ell\in\mathbb N\}$ by Proposition~\ref{P: commutative}.
Let us now consider the case when $\mathbf P_n\{\mathbf w^\prime \approx \mathbf v^\prime\}$ is not commutative.
Then $M(xy)\in \mathbf P_n\{\mathbf w^\prime \approx \mathbf v^\prime\}$ by Lemma~\ref{L: does not contain M(xy)}.
Let $\mathbf w_0^\prime\prod_{i=1}^m(t_i\mathbf w_i^\prime)$ be the decomposition of $\mathbf w^\prime$. 
Lemma~\ref{L: decompositions of u and v} implies that the decomposition of $\mathbf v^\prime$ has the form $\mathbf v_0^\prime\prod_{i=1}^m(t_i\mathbf v_i^\prime)$.
According to Lemma~\ref{L: from pxqxr to px^2qr}, the identities satisfied by $\mathbf P_n$ can be used to convert the words $\mathbf w^\prime$ and $\mathbf v^\prime$ into some words $\mathbf w$ and $\mathbf v$, respectively, such that the following hold:
\begin{itemize}
\item the decompositions of $\mathbf w$ and $\mathbf v$ are of the form $\mathbf w_0\prod_{i=1}^m(t_i\mathbf w_i)$ and $\mathbf v_0\prod_{i=1}^m(t_i\mathbf v_i)$, respectively;
\item $\occ_x(\mathbf w_i)=\occ_x(\mathbf w_i^\prime)$ and $\occ_x(\mathbf v_i)=\occ_x(\mathbf v_i^\prime)$ for any $x\in\mathfrak X$ and $i=0,1,\dots,m$;
\item $x^{\occ_x(\mathbf w_i)}$ and $x^{\occ_x(\mathbf v_i)}$ are factors of $\mathbf w_i$ and $\mathbf v_i$, respectively, for any $x\in\con(\mathbf w_i)=\con(\mathbf v_i)$ and $i=0,1,\dots,m$.
\end{itemize}
In view of this fact, it suffices to show that the identity $\mathbf w \approx \mathbf v$ is equivalent within $\mathbf P_n$ to some subset of $\Phi$.
Clearly, the set 
\begin{equation}
\label{set of identities}
\{\mathbf w(x,t_1,t_2,\dots,t_m)\approx \mathbf v(x,t_1,t_2,\dots,t_m)\mid x\in \mul(\mathbf w)=\mul(\mathbf v)\}
\end{equation}
of identities can be used to convert the word $\mathbf w$ into some word $\mathbf u$ such that the following hold:
\begin{itemize}
\item the decomposition $\mathbf u$ has the form $\mathbf u_0\prod_{i=1}^m(t_i\mathbf u_i)$;
\item $\occ_x(\mathbf u_i)=\occ_x(\mathbf v_i)$ for any $x\in\mathfrak X$ and $i=1,2,\dots,m$;
\item $x^{\occ_x(\mathbf u_i)}$ is a factor of $\mathbf u_i$ for any $x\in\con(\mathbf u_i)$ and $i=1,2,\dots,m$.
\end{itemize}
Evidently, every identity from~\eqref{set of identities} is of the form~\eqref{one letter in a block}.
Then it follows from Lemmas~\ref{L: one letter in block reduction} and~\ref{L: reduction to delta_{r,m}} that the set~\eqref{set of identities} is equivalent modulo $x^2y\approx yx^2$ to some identities of the form $\delta_{k,\ell}$ and $x^k\approx x^\ell$.
In view of this fact, it remains to show that the identity $\mathbf u \approx \mathbf v$ is equivalent within $\mathbf P_n$ to a subset of $\Phi$.

We call an identity $\mathbf c\approx\mathbf d$ 1-\textit{invertible} if $\mathbf c=\mathbf e^\prime\, xy\,\mathbf e^{\prime\prime}$ and $\mathbf d=\mathbf e^\prime\, yx\,\mathbf e^{\prime\prime}$ for some words $\mathbf e^\prime,\mathbf e^{\prime\prime}$ and letters $x,y\in\con(\mathbf e^\prime\mathbf e^{\prime\prime})$. 
Let $k>1$. 
An identity $\mathbf c\approx\mathbf d$ is called $k$-\textit{invertible} if there is a sequence of words $\mathbf c=\mathbf w_0,\mathbf w_1,\dots,\mathbf w_k=\mathbf d$ such that the identity $\mathbf w_i\approx\mathbf w_{i+1}$ is 1-invertible for each $i=0,1,\dots,k-1$ and $k$ is the least number with such a property. 
For convenience, we will call the trivial identity 0-\textit{invertible}. 

Notice that the identity $\mathbf u \approx \mathbf v$ is $r$-invertible for some $r\in\mathbb N_0$ because $\occ_x(\mathbf u_i)=\occ_x(\mathbf v_i)$ for any $x\in\mathfrak X$ and $i=1,2,\dots,m$. 
We will use induction by $r$.

\smallskip

\textit{Induction base}. 
If $r=0$, then $\mathbf u=\mathbf v$, whence $\mathbf P_n\{\mathbf u\approx\mathbf v\}=\mathbf P_n\{\varnothing\}$.

\smallskip

\textit{Induction step}. 
Let $r>0$. 
Obviously, $\mathbf u_j\ne\mathbf v_j$ for some $j\in\{0,1,\dots,m\}$. 
Then there are letters $x$ and $y$ such that $\mathbf u_j=\mathbf b^\prime\, y^qx^p\,\mathbf b^{\prime\prime}$ for some $\mathbf b^\prime,\mathbf b^{\prime\prime}\in\mathfrak X^\ast$ and the word $x^p$ precedes the word $y^q$ in the block $\mathbf v_j$, where $p=\occ_x(\mathbf u_j)=\occ_x(\mathbf v_j)$ and $q=\occ_y(\mathbf u_j)=\occ_y(\mathbf v_j)$. 
We denote by $\hat{\mathbf u}$ the word that is obtained from $\mathbf u$ by swapping of the words $x^p$ and $y^q$ in the block $\mathbf u_j$. 

Suppose that $\occ_x(\mathbf u_c)>1$ for some $c\in\{0,1,\dots,m\}$. 
We may assume without loss of generality that $j\le c$.
If $c\ne j$, then $\mathbf u_c=\mathbf f^\prime\,x^{\occ_x(\mathbf u_c)}\,\mathbf f^{\prime\prime}$ for some $\mathbf f^\prime,\mathbf f^{\prime\prime}\in\mathfrak X^\ast$ and $\mathbf P_n$ satisfies the identities
$$
\begin{aligned}
\mathbf u&{}\stackrel{\,\,\,\,\,\,x^2y\approx yx^2\,\,\,\,\,}\approx \mathbf u_0\cdot\biggl(\prod_{i=1}^{j-1}t_i\mathbf u_i\biggr)\cdot(t_j\mathbf b^\prime\, x^2y^qx^p\,\mathbf b^{\prime\prime})\cdot\biggl(\prod_{i=j+1}^{c-1}t_i\mathbf u_i\biggr)\cdot(t_c\mathbf f^\prime\,x^{\occ_x(\mathbf u_c)-2}\,\mathbf f^{\prime\prime})\cdot\biggl(\prod_{i=c+1}^mt_i\mathbf u_i\biggr)\\
&{}\stackrel{\text{Lemma}~\ref{L: from pxqxr to px^2qr}}\approx \mathbf u_0\cdot\biggl(\prod_{i=1}^{j-1}t_i\mathbf u_i\biggr)\cdot(t_j\mathbf b^\prime\, x^{2+p}y^q\,\mathbf b^{\prime\prime})\cdot\biggl(\prod_{i=j+1}^{c-1}t_i\mathbf u_i\biggr)\cdot(t_c\mathbf f^\prime\,x^{\occ_x(\mathbf u_c)-2}\,\mathbf f^{\prime\prime})\cdot\biggl(\prod_{i=c+1}^mt_i\mathbf u_i\biggr)\\
&{}\stackrel{\,\,\,\,\,\,x^2y\approx yx^2\,\,\,\,\,}\approx \mathbf u_0\cdot\biggl(\prod_{i=1}^{j-1}t_i\mathbf u_i\biggr)\cdot(t_j\mathbf b^\prime\, x^py^q\,\mathbf b^{\prime\prime})\cdot\biggl(\prod_{i=j+1}^{c-1}t_i\mathbf u_i\biggr)\cdot(t_c\mathbf f^\prime\,x^{\occ_x(\mathbf u_c)}\,\mathbf f^{\prime\prime})\cdot\biggl(\prod_{i=c+1}^mt_i\mathbf u_i\biggr)=\hat{\mathbf u}.
\end{aligned}
$$
If $c=j$, then $\mathbf P_n$ satisfies the identities
$$
\begin{aligned}
\mathbf u&{}\stackrel{\,\,\,\,\,\,x^2y\approx yx^2\,\,\,\,\,}\approx \mathbf u_0\cdot\biggl(\prod_{i=1}^{j-1}t_i\mathbf u_i\biggr)\cdot(t_j\mathbf b^\prime\, x^2y^qx^{p-2}\mathbf b^{\prime\prime})\cdot\biggl(\prod_{i=j+1}^mt_i\mathbf u_i\biggr)\\
&{}\stackrel{\text{Lemma}~\ref{L: from pxqxr to px^2qr}}\approx \mathbf u_0\cdot\biggl(\prod_{i=1}^{j-1}t_i\mathbf u_i\biggr)\cdot(t_j\mathbf b^\prime\, x^py^q\,\mathbf b^{\prime\prime})\cdot\biggl(\prod_{i=j+1}^mt_i\mathbf u_i\biggr)=\hat{\mathbf u}.
\end{aligned}
$$
We see that $\hat{\mathbf u}\approx\mathbf v$ is satisfied by $\mathbf P_n$.
The identity $\hat{\mathbf u}\approx\mathbf v$ is \mbox{$(r-pq)$}-invertible. 
By the induction assumption, $\mathbf P_n\{\hat{\mathbf u}\approx\mathbf v\}=\mathbf P_n\Gamma$ for some $\Gamma\subseteq\Phi$. 
Then $\mathbf P_n\{\mathbf u\approx\mathbf v\}=\mathbf P_n\Gamma$, and we are done.
By similar arguments we can show that if $\occ_y(\mathbf u_d)=\occ_y(\mathbf v_d)>1$ for some $d\in\{0,1,\dots,m\}$, then $\mathbf P_n\{\mathbf u\approx\mathbf v\}=\mathbf P_n\Gamma$ for some $\Gamma\subseteq\Phi$. 
Thus, we may further assume that $\occ_x(\mathbf u_i),\occ_y(\mathbf u_i)\le 1$ for any $i=0,1,\dots,m$.
In particular, $p=q=1$.

Suppose that $x,y\in\con(\mathbf u_a)=\con(\mathbf v_a)$ for some $a\ne j$. 
Then Lemma~3.12 in~\cite{Gusev-Vernikov-21} or the statement dual to it implies that $\mathbf P_n$ satisfies the identity $\mathbf u\approx\hat{\mathbf u}$. 
The identity $\hat{\mathbf u}\approx\mathbf v$ is \mbox{$(r-1)$}-invertible. 
By the induction assumption, $\mathbf P_n\{\hat{\mathbf u}\approx\mathbf v\}=\mathbf P_n\Gamma$ for some $\Gamma\subseteq\Phi$. 
Then $\mathbf P_n\{\mathbf u\approx\mathbf v\}=\mathbf P_n\Gamma$, and we are done.
Thus, we may further assume that at most one of the letters $x$ and $y$ occurs in $\mathbf u_i$ for any $i\ne j$.

Let $s$ be the least number such that $\con(\mathbf u_s)\cap\{x,y\}\ne\varnothing$. Put
\begin{equation}
\label{T_{x,y}}
T_{x,y}=\{t_j\mid s<j\le m,\ \con(\mathbf u_j)\cap\{x,y\}\ne\varnothing\}.
\end{equation}
In view of the above, the identity
\begin{equation}
\label{2 mult letters and their dividers}
\mathbf u(\{x,y\}\cup T_{x,y})\approx\mathbf v(\{x,y\}\cup T_{x,y})
\end{equation}
must be linear-balanced.
Then, since at most one of the letters $x$ and $y$ occurs in $\mathbf u_i$ for any $i\ne j$, the identity~\eqref{2 mult letters and their dividers} must coincide (up to renaming of letters) with an identity of the form~\eqref{pxyq=pyxq} such that the equalities~\eqref{pxyq=pyxq restrict} hold. 
It is clear that $\mathbf u\stackrel{\eqref{2 mult letters and their dividers}}\approx\hat{\mathbf u}$ and the identity $\hat{\mathbf u}\approx\mathbf v$ is \mbox{$(r-1)$}-invertible. 
By the induction assumption, $\mathbf P_n\{\hat{\mathbf u}\approx\mathbf v\}=\mathbf P_n\Gamma$ for some $\Gamma\subseteq\Phi$. 
Then $\mathbf P_n\{\mathbf u\approx\mathbf v\}=\mathbf P_n\{\eqref{2 mult letters and their dividers},\,\Gamma\}$.
In view of the above, this implies that $\mathbf P_n\{\mathbf w^\prime\approx\mathbf v^\prime\}$ can be given within $\mathbf P_n$ by some subset of $\Phi$, and we are done.

\smallskip

\textit{Distributivity of $\mathfrak L(\mathbf R_n)$}.
We are going to deduce the required fact from Lemma~\ref{L: smth imply distributivity} with $\mathbf V=\mathbf R_n$ and $\Sigma=\Phi$, where $\Phi$ consists of the identity $xy\approx yx$, all identities of the form $x^k \approx x^\ell$ with $k,\ell\in\mathbb N$, all identities of the form $\delta_{k,\ell}$ with $k,\ell\in\mathbb N_0$ and all identities of the form~\eqref{two letters in a block} with $r\in\mathbb N_0$, $e_0,f_0\in\mathbb N$, $e_1,f_1,\dots,e_r,f_r\in\mathbb N_0$, $\sum_{i=0}^r e_i\ge 2$ and $\sum_{i=0}^r f_i\ge 2$. 
In view of Lemmas~\ref{L: x^n=x^m in X wedge Y} and~\ref{L: xy.. = yx.. in X wedge Y} and Corollaries~\ref{C: xy=yx in X wedge Y} and~\ref{C: delta_{n,m} in X wedge Y}, to do this, it remains to prove only that each subvariety of $\mathbf R_n$ may be given within $\mathbf R_n$ by some subset of $\Phi$.

Let $\mathbf u \approx \mathbf v$ be an arbitrary identity.
It suffices to verify that $\mathbf R_n\{\mathbf u \approx \mathbf v\}=\mathbf R_n\Gamma$ for some $\Gamma\subseteq\Phi$.
If the variety $\mathbf R_n\{\mathbf u \approx \mathbf v\}$ is commutative, then it can be defined by a subset of $\{x^k\approx x^\ell,\,xy\approx yx\mid k,\ell\in\mathbb N\}$ by Proposition~\ref{P: commutative}.
So, it remains to consider the case when $\mathbf R_n\{\mathbf u \approx \mathbf v\}$ is not commutative.
In view of Proposition~4.1 in~\cite{Lee-12a} and the inclusion $\mathbf R_n\subseteq\var\{\alpha,\beta\}$, we may assume that one of the following two statements holds:
\begin{itemize}
\item[(a)] the identity $\mathbf u\approx\mathbf v$ coincides with an identity of the form~\eqref{one letter in a block} with $r,e_0,f_0,e_1,f_1,\dots,e_r,f_r\in\mathbb N_0$;{\sloppy

}
\item[(b)] the identity $\mathbf u\approx\mathbf v$ coincides with an identity of the form~\eqref{two letters in a block} with $r\in\mathbb N_0$, $e_0,f_0\in\mathbb N$, $e_1,f_1,\dots,e_r,f_r\in\mathbb N_0$, $\sum_{i=0}^r e_i\ge 2$ and $\sum_{i=0}^r f_i\ge 2$.
\end{itemize}

Notice that if the claim~(b) holds, then the identity $\mathbf u \approx \mathbf v$ lies in $\Phi$, and we are done.
Suppose now that the claim~(a) holds.
Lemma~\ref{L: one letter in block reduction} and the inclusion $\mathbf R_n\subseteq\mathbf A$ allow us to assume that $e_1,f_1,e_2,f_2,\dots,e_r,f_r\le 1$.
In view of Lemma~\ref{L: reduction to delta_{r,m}}, the identity~\eqref{one letter in a block} is equivalent within $\mathbf R_n$ to the set $\{x^e\approx x^f,\,\delta_{e-e_0,e_0},\,\delta_{f-f_0,f_0}\}$, where $e=\sum_{i=0}^r e_i$ and $f=\sum_{i=0}^r f_i$.
We see that the identity $\mathbf u \approx \mathbf v$ is equivalent within $\mathbf R_n$ to some subset of $\Phi$ in either case.

\smallskip

Theorem~\ref{T: A_cen} is proved.\qed

\begin{remark}
\label{R: countably infinite}
Analysing the proof of Theorem~\ref{T: A_cen}, one can notice that if $\mathbf X \in\{\mathbf P_n,\mathbf Q_n ,\mathbf R_n\}$, then each subvariety of $\mathbf X$ may be given within $\mathbf X$ by a finite set of identities.
Consequently, set of all distributive subvarieties of $\mathbf A_\mathsf{cen}$ is countably infinite.
\end{remark}

\subsection*{Acknowledgments.} The author thanks Edmond W.H. Lee, Boris M. Vernikov and Mikhail~V. Volkov for several comments and suggestions for improving the manuscript.{\sloppy

}

\small

\end{document}